\documentclass[a4paper,twoside]{article}

\setlength{\textwidth}{168mm}
\setlength{\textheight}{225mm}
\setlength{\oddsidemargin}{-2mm}
\setlength{\evensidemargin}{2mm}
\setlength{\topmargin}{-5mm}

\usepackage{lmodern}
\usepackage[T1]{fontenc}
\usepackage{inputenc}
\usepackage{amsmath}
\usepackage{amsthm}
\usepackage{amsfonts}
\usepackage{graphicx}
\usepackage{color}
\usepackage[all]{xy}
\usepackage{multicol}
\usepackage{amssymb}
\usepackage{setspace}
\usepackage{makeidx}
\usepackage{fancybox, calc}
\usepackage{tikz}
\usepackage{eso-pic}
\usepackage{pdflscape}
\usepackage[some]{background}
\usepackage{enumitem}
\usepackage{tikz}

\theoremstyle{definition}
\newtheorem{defin}{Definition}
\newtheorem{ejem}{Example}

\theoremstyle{definition}
\newtheorem{teo}{Theorem}
\newtheorem{cor}{Corollary}
\newtheorem{prop}{Proposition}
\newtheorem{lema}{Lemma}

\theoremstyle{remark}
\newtheorem{rem}{Remark}

\newcommand\fun{\section*{Fundings}}
\newcommand\acks{\section*{Acknowledgements}}
\makeindex

\title{Global Invariant Branches of Non-degenerate Foliations on Projective Toric Surfaces}
\author{Beatriz Molina-Samper}

\date{}

\begin{document}
\maketitle

\begin{abstract}
	We show that the isolated invariant branches globalize to algebraic curves, when we consider weak toric type complex hyperbolic foliations on projective toric ambient surfaces. To do it, we pass through a characterization of weak toric type foliations in terms of ``non-degeneracy'' conditions, associated to Newton polygons. We also give a description of the relationship between invariant algebraic curves and isolated invariant branches, valid for the case of toric type, by means of the following dichotomy. Either there is a rational first integral and there are no isolated invariant branches or we have only finitely many global invariant curves, all of them extending isolated invariant branches.
\end{abstract}
\maketitle
\section{Introduction}
The aim of this paper is to describe local-global features for the invariant curves of weak toric type complex hyperbolic foliations on projective toric ambient surfaces. The main result we present is stated as follows:
\begin{quote}
	\textbf{Theorem \ref{teo:prolongacion}.} The isolated invariant branches of a complex hyperbolic weak toric type foliation on a projective toric surface extend to projective algebraic curves.
\end{quote}
A foliation is of toric type when it has a combinatorial desingularization. This definition was introduced by M.I.T. Camacho and F.Cano in \cite{Cam-C}. Analogously, a foliation is of weak toric type when it has a combinatorial desingularization, but just up to presimple points. We call complex hyperbolic to the foliations without saddle-nodes after reduction of singularities (they are also called ``generalized curves'').
An invariant branch is isolated if it always falls into a non-dicritical component of the exceptional divisor after any reduction of singularities. This concept was suggested in \cite{Cam-N-S} by C. Camacho, A. Lins Neto and P. Sad.

We characterize weak toric type foliations in terms of Newton polygons and ``initial forms''. To do it, we introduce the concept of Newton non-degenerate foliation, following the classical ideas of A.G. Kouchnirenko and M. Oka for varieties, that can be found in \cite{Kus,Oka}. We prove the equivalence result below:
\begin{quote}
	\textbf{Theorem \ref{teo:equivalencia}:} A complex hyperbolic foliated surface is Newton non-degenerate if and only if it is of weak toric type.
\end{quote}
A foliated surface is the data of a foliation $\mathcal{F}$ on a complex surface $M$ and a normal crossings divisor $E \subset M$. Most of the definitions and properties we present in this paper concern to the pair $(\mathcal{F},E)$ and not only to the foliation $\mathcal{F}$.

Let us recall that a nonsingular projective toric surface is naturally endowed with a normal crossings divisor given by the union of the non-dense orbits of the torus action. Moreover, these surfaces can be obtained by blowing-ups and blowing-downs from the projective plane $\mathbb{P}_{\mathbb{C}}^2$ with the ``standard'' toric structure, that gives the divisor $X_0X_1X_2=0$.
Most of the properties we are going to consider are stable under equivariant (combinatorial) blowing-ups and blowing-downs. This allow us to prove many of the results by looking just to the projective plane.

In order to describe Newton non-degenerate foliations on the projective plane, we use in an essential way the following property: ``The number of roots of a Laurent polynomial system in general position is the mixed volume of the associated polyhedra''. This result was proved by D.N. Bernstein, A.G. Khovanskii and A.G. Kouchnirenko in  \cite{Ber, Kho}. Applying it, we show that the homogeneous polygon $\Delta_h(\mathcal{F})$ of a Newton non-degenerate foliation $\mathcal{F}$ is a single vertex or a segment. In this way, we describe a set of projective algebraic curves such that any isolated invariant branch at a given point is the germ of one of these curves at the point. More precisely, we have the following three cases for the homogeneous polygon $\Delta_h(\mathcal{F})$:
\begin{quote}
	\begin{description}
		\item [Case a)] It is a single point: there are no isolated invariant branches.
		\item [Case b)] It is the segment joining the points $(0,d,0)$ and $(0,0,d)$: the isolated invariant branches are in a finite family of lines $\ell_{\lambda}=(X_2-\lambda X_1=0)$.
		\item [Case c)] It is the segment joining the points $(d,0,0)$ and $(0,a,d-a)$: the isolated invariant branches are in a finite family of curves $\mathcal{C}_{\lambda}=(X_1^{\tilde{d}-\tilde{a}}X_2^{\tilde{a}}-\lambda X_0^{\tilde{d}}=0)$.
	\end{description}
\end{quote}
In this way, we obtain the proof of Theorem \ref{teo:prolongacion}.

Concerning the existence of isolated invariant branches in the weak toric type case, we see in Lemma \ref{lema:conexion} that we effectively find at least one for each $\lambda$ in the above families. In case b), we just blow-up the common point of all the lines $\ell_{\lambda}$. Each of the transformed lines cuts the new divisor at two points $p_{\lambda}$ and $q_{\lambda}$ and we prove that the ``eigenvalues ratios'' of the singularities of the foliation at these points are opposite each to the other, hence one of them is a simple point and we find an isolated invariant branch through it. In case c), we find a similar property after reduction of singularities of the cuspidal family $\mathcal{C}_{\lambda}=(X_1^{\tilde{d}-\tilde{a}}X_2^{\tilde{a}}-\lambda X_0^{\tilde{d}}=0)$.

When we have the strongest property that the foliation is of toric type, we obtain the following result:
\begin{quote}
	\textbf{Theorem \ref{teo:dicotomia}:} We have the next dichotomy for a toric type foliation on a projective toric surface:
	\begin{enumerate}
		\item [I)] There is rational first integral and there are no isolated invariant branches.
		\item [II)] There is no rational first integral and every proper invariant branch extending to a projective algebraic curve is an isolated invariant branch.
	\end{enumerate}
\end{quote}
An invariant branch is proper when it is based at a point of the divisor but is not contained in it. 

From the results of this paper, we know that there is always global invariant curve for a toric type foliation on a projective toric surface. This does not hold for general complex hyperbolic foliations on the projective plane. Indeed, Jouanolou's classical example (see \cite{Jou}) given by the differential form
$$
(X_0^2X_1-X_2^{3})dX_0+(X_1^2X_2-X_0^{3})dX_1+(X_2^2X_0-X_1^{3})dX_2
$$
has no algebraic invariant curves, but it has seven singularities and two isolated invariant branches at each. 

Jouanolou's example is classically used to construct germs of codimension one foliations in dimension three without invariant surface. In a forthcoming paper, we apply these results to prove the existence of invariant surface for germs of toric type codimension one foliations in dimension three.

\section{Generalities on Foliated Surfaces}
We introduce basic definitions and results concerning the theory of holomorphic singular foliations in dimension two. These contents can be essentially found at \cite{Can-C-D}.
\subsection{Foliated surfaces}

A \emph{nonsingular complex analytic surface} $M$, is a $\mathbb{C}$-ringed space $M=(|M|, \mathcal{O}_M)$ in local $\mathbb{C}$-algebras of functions, covered by open subsets isomorphic to open subsets of ($\mathbb{C}^2,\mathcal{O}_{\mathbb{C}^2})$. Denote by $\Omega^1_M$ the sheaf of germs of holomorphic one-forms on $M$. A \emph{codimension one holomorphic singular foliation} $\mathcal{F}$ on $M$ (for short, a \emph{foliation} on $M$) is an invertible subsheaf $\mathcal{F}\subset \Omega_M^1$, locally generated at each point $p\in |M|$ by a holomorphic one-form $\omega \in \Omega^1_{M,p}$, that we write in local coordinates as
$$
\omega= f_1dx_1+f_2dx_2,
$$
where $f_1,f_2 \in \mathcal{O}_{M,p}$ have no common factors. The \emph{singular locus} $\text{Sing}(\mathcal{F})$ is the closed analytic subset of $M$ locally defined by $f_1=f_2=0$. It is a set of isolated points.

A \emph{normal crossings divisor} $E$ of $M$ is the union of a finite family $\{E_i\}_{i \in I}$ of connected closed nonsingular holomorphic curves such that, for each point $p \in |M|$ we have $E\subset (x_1x_2=0)$, where $(x_1,x_2)$ is a local coordinate system. Note that the $E_i$ are the irreducible components of $E$. We denote by $e_p(E)$ the number of irreducible components of $E$ through $p \in |M|$, we have that $e_p(E) \in \{0,1,2\}$. We say that $E$ is a \emph{strong normal crossings divisor} if either $E_i \cap E_j$ is empty or it is a single point, for every $i,j$.

Given a point $p \in |M|$, a curve branch $(\Gamma,p)$ is defined by an equation $f=0$, where $f\in \mathcal{O}_{M,p}$ is irreducible. We say that $(\Gamma, p)$ is an \emph{invariant branch of $\mathcal{F}$} if $\omega \wedge df = f\alpha$, where $\alpha$ is a germ of holomorphic 2-form and $\omega$ is a generator of $\mathcal{F}$ at $p$. We know that there is an only invariant branch $(\Gamma,p)$ through $p$, when $p \not\in \text{Sing}(\mathcal{F})$.

Consider an irreducible curve $Y$ of $M$ and a point $p\in Y$. If $(\Gamma,p) \subset (Y,p)$ is an invariant branch, then every branch $(\Upsilon,q) \subset (Y,q)$ is also invariant, for each $q\in Y$. In this case, we say that $Y$ is an \emph{invariant curve of $\mathcal{F}$}. The non-invariant irreducible components of $E$ are also called \emph{dicritical components}. We write the index set as $I=I_{\text{inv}}\cup I_{\text{dic}}$, where $I_{\text{inv}}$ corresponds to the invariant components and $I_{\text{dic}}$ corresponds to the dicritical ones.
We also denote
$$
E_{\text{inv}}=\cup_{i \in I_{\text{inv}}}E_i; \quad E_{\text{dic}}=\cup_{i \in I_{\text{dic}}}E_i.
$$

We say that \emph{$\mathcal{F}$ and $E$ have normal crossings at $p\notin \text{Sing}(\mathcal{F})$} if $E\cup \Gamma$ is a local normal crossings divisor, where $(\Gamma,p)$ is the only invariant branch of $\mathcal{F}$ through $p$. The \emph{adapted singular locus} $\text{Sing} (\mathcal{F},E)$ is defined by
$$
\text{Sing} (\mathcal{F},E)=\text{Sing} (\mathcal{F}) \cup \{p \notin \text{Sing} (\mathcal{F}); \; \mathcal{F} \text{ and } E \text{ have no normal crossings at } p\}.
$$
We have that $\text{Sing} (\mathcal{F},E)$ is a set of isolated points and $\text{Sing} (\mathcal{F},E) \supset \text{Sing} (\mathcal{F})$.
\begin{rem}
	If $|M|$ is a compact set or a germ around a compact set, then $\text{Sing} (\mathcal{F},E)$ is finite.
\end{rem}
We say that a coordinate system $(x_1,x_2)$ at $p \in |M|$ is \emph{adapted to $E$} if $e_p(E)=0$, $E=(x_1=0)$ or $E=(x_1x_2=0)$. 
Let $\Omega^1_M(\log E)$ be the sheaf of germs of logarithmic one-forms along $E$. A \emph{codimension one singular $E$-foliation $\mathcal{L}$ on $M$} (for short, an \emph{$E$-foliation} on $M$) is an invertible subsheaf $\mathcal{L}\subset \Omega^1_M(\log E)$, locally generated at each point $p\in |M|$ by a logarithmic one-form $\eta \in \Omega^1_{M,p}(\log E)$, that we write in adapted local coordinates as
$$
\eta =\sum_{i=1}^ea_i\dfrac{dx_i}{x_i}+\sum_{i=e+1}^{2}a_idx_i; \quad e=e_p(E),
$$
where the coefficients $a_i$ have no common factors. We define the \emph{adapted multiplicity $\nu_p(\mathcal{F},E)$} to be the minimum $\nu_p(a_1,a_2)$ of the orders $\nu_p(a_1)$ and $\nu_p(a_2)$ at $p$ of the coefficients.

Denote by $\text{Fol}(M,E)$ the set of $E$-foliations on $M$ and by $\text{Fol}(M)$ the set of foliations. Observe that $\text{Fol}(M,\emptyset)=\text{Fol}(M)$. There is a bijection between $\text{Fol}(M)$ and $\text{Fol}(M,E)$ given by $\mathcal{F} \mapsto \mathcal{L}_{\mathcal{F}}$, determined by the relation $\omega = x_1^{\varepsilon_1}\cdots x_e^{\varepsilon_e}\eta$, where the exponents $\varepsilon_i$ are defined by
$$
\varepsilon_i =\left\{\begin{array}{ccc}
1 & \text{if} & x_i=0 \text{ is invariant}.\\
0 & \text{if} & x_i=0 \text{ is dicritical}.
\end{array}\right.
$$
A local generator $\eta$ of $\mathcal{L}_{\mathcal{F}}$ is also called a \emph{local generator of $\mathcal{F}$ adapted to $E$}.
\begin{defin}
	An \emph{ambient surface} is a pair  $\mathcal{M}=(M,E)$, where $M$ is a nonsingular complex analytic surface $M$ and $E$ is a strong normal crossings divisor. A \emph{foliated surface} $(\mathcal{M},{\mathcal F})$ is the data of an ambient surface and a foliation $\mathcal F$ on $M$.
\end{defin}

Given an open subset $U\subset |M|$ such that $E \cap U$ has only finitely many irreducible components, the restriction $\mathcal{M}|_U$ is a well-defined ambient surface. In this case, the restriction $(\mathcal{M},{\mathcal F})|_U$ is also a foliated surface. Given a point $p\in |M|$, we define the germ $(\mathcal{M},{\mathcal F})_p$ of $(\mathcal{M},{\mathcal F})$ at $p$ in the natural way.
\subsection{Presimple and simple points under blowing-ups} \label{sec:presimples}
Let us consider a foliated surface $(\mathcal{M},\mathcal{F})$ and a point $p\in |M|$. A germ $\xi$ of holomorphic vector field is \emph{tangent to $\mathcal{F}$} if $\omega(\xi)=0$, where $\omega$ is a local generator of $\mathcal{F}$ (the sheaf of tangent germs of vector fields also defines the foliation). Notice that $\xi(p)=0$ when $p \in \text{Sing}(\mathcal{F})$ and, in this case, we have a well-defined linear part $L_{\xi}$.
\begin{defin}
	We say that $p$ is a \emph{presimple point for $(\mathcal{M},\mathcal{F})$} if $p \notin \text{Sing}(\mathcal{F},E)$ or we have that $p \in \text{Sing}(\mathcal{F})$, $e_p(E)\geq 1$, $e_p(E_{\text{dic}})=0$ and there is a germ $\xi$ of vector field tangent to $\mathcal{F}$ such that $L_{\xi}$ is non-nilpotent. We say that $p$ is \emph{simple} if it is presimple and the eigenvalues of $L_{\xi}$ have not positive rational ratio, when $p \in \text{Sing}(\mathcal{F})$. A \emph{saddle-node} is a simple singularity where the linear part of every tangent germ of vector field has a zero eigenvalue.
\end{defin}
\begin{rem}
	For a simple singularity that is not a saddle-node, the ratios $\alpha$ and $1/\alpha$ of the eigenvalues correspond to the Camacho-Sad indices with respect to the invariant branches (see \cite{Cam-S}).
\end{rem}
We distinguish two types of presimple points: trace and corner type points. More precisely:
\begin{enumerate}
	\item If $p \notin \text{Sing}(\mathcal{F})$, we say that it is of \emph{trace type} if $e_p(E_{\text{inv}})=0$ and that it is of \emph{corner type} if $e_p(E_{\text{inv}})=1$.
	\item If $p \in \text{Sing}(\mathcal{F})$, we say that it is of \emph{trace type} if $e_p(E)=1$ and that it is of \emph{corner type} if $e_p(E)=2$.
\end{enumerate}
\begin{rem}
	Given a presimple singularity, there are no dicritical components through it. If $p$ is a simple nonsingular point, we have that $e_p(E_\text{dic}) \in \{0,1\}$. Hence, a point $p$ with $e_p(E_\text{dic})=2$ cannot be a presimple point.
\end{rem}
\begin{defin}
	A foliated surface $(\mathcal{M},\mathcal{F})$ is \emph{desingularized} (respectively, \emph{pre-desingularized}) if it has only simple points (respectively, presimple points).
\end{defin}
\begin{rem} \label{rem:singsingadapt}
	If $(\mathcal{M},\mathcal{F})$ is pre-desingularized, then $\text{Sing}(\mathcal{F},E)=\text{Sing}(\mathcal{F})$.
\end{rem}
Consider the blowing-up $\pi: M' \rightarrow M$ centered at a point $p\in |M|$. We obtain a new foliated surface $(\mathcal{M}', \mathcal{F}')$, where $\mathcal{M}'=(M',E')$, with $E'=\pi^{-1}(E\cup \{p\})$ and $\mathcal{F}'$ is the transform of $\mathcal{F}$ by $\pi$. We write for short $\pi:(\mathcal{M}', \mathcal{F}') \rightarrow (\mathcal{M}, \mathcal{F})$.

We summarize now the main properties of the behaviour of simple and presimple points under blowing-up (for more details, see \cite{Can-C-D}.)
Let us assume that the center $p$ of $\pi$ is a presimple point for $(\mathcal{M}, \mathcal{F})$. We have that all $q\in D$ are presimple points for $(\mathcal{M}', \mathcal{F}')$, where $D=\pi^{-1}(p)$ is the exceptional divisor. More specifically:
\begin{enumerate}
	\item If $p \notin \text{Sing}(\mathcal{F})$, then $D$ is invariant and there is only one point $p'\in \text{Sing}(\mathcal{F}')\cap D$. Moreover $p'$ is a simple singularity (it represents the tangent at $p$ of the only invariant branch of $\mathcal{F}$ at $p$).
	\item If $p$ is a simple singularity for $(\mathcal{M}, \mathcal{F})$, then $D$ is invariant and there are exactly two points $p'_1,p'_2 \in \text{Sing}(\mathcal{F}')\cap D$. Moreover $p'_1$ and $p'_2$ are simple singularities for $(\mathcal{M}', \mathcal{F}')$. We have also that  $p'_1,p'_2$ are corners if $p$ is a corner and that there is a corner and a trace between $\{p'_1,p'_2\}$ if $p$ is a trace.
	\item If $p$ is a presimple but not simple singularity for $(\mathcal{M}, \mathcal{F})$, we take a germ of vector field $\xi$ with $L_{\xi}$ non-nilpotent. We have three possibilities:
	\begin{enumerate}
		\item The linear part $L_{\xi}$ is the identity up to a factor. In this case $D$ is dicritical and $\text{Sing}(\mathcal{F}')\cap D=\emptyset$.
		\item The linear part $L_{\xi}$ is not diagonalizable. The exceptional divisor $D$ is invariant and there is only one point $p'$ in $\text{Sing}(\mathcal{F}')\cap D$. Moreover $p'$ is a saddle-node corner type singularity. This situation only holds if $p$ is of trace type.
		\item The linear part $L_{\xi}$ has two different eigenvalues $\lambda,\mu$ with $\lambda/\mu \in \mathbb{Q}_{>1}$. In this situation $D$ is invariant and there are exactly two points $p',q' \in \text{Sing}(\mathcal{F}')\cap D$. One of them is a simple singularity and the other one is presimple with eigenvalues equal to $\lambda-\mu, \mu$. Moreover, if $p$ is of corner type, then $p',q'$ are both corners; if $p$ is of trace type, we obtain a corner and a trace between $\{p',q'\}$.
	\end{enumerate}
\end{enumerate}
\subsection{Reduction of singularities}
We say that a morphism $\pi:(\mathcal{M}',\mathcal{F}')\rightarrow(\mathcal{M},\mathcal{F})$ is a \emph{reduction {\rm (respectively}, a pre-reduction{\rm)} of singularities of $(\mathcal{M},\mathcal{F})$} if $\pi$ is a finite composition $\pi=\pi_1 \circ \pi_2 \circ \cdots \circ \pi_N$, where each
$$
\pi_i:(\mathcal{M}^i,\mathcal{F}^i)\rightarrow (\mathcal{M}^{i-1},\mathcal{F}^{i-1}),
$$
is a blowing-up centered at $p_{i-1}\in |M_{i-1}|$, for $i=1,2,\ldots,N$ and $(\mathcal{M}',\mathcal{F}')=(\mathcal{M}^N,\mathcal{F}^N)$ is a desingularized (respectively, pre-desingularized) foliated surface. The reduction of singularities $\pi$ is called \emph{minimal} if, for any other reduction of singularities $\bar{\pi}:(\mathcal{M}'',\mathcal{F}'')\rightarrow(\mathcal{M},\mathcal{F})$, there is a unique factorization $\bar{\pi} = \pi \circ f$, where
$$
f:(\mathcal{M}'',\mathcal{F}'')\rightarrow (\mathcal{M}',\mathcal{F}')
$$
is the composition of a finite sequence of blowing-ups (up to isomorphism). Note that $\pi$ is minimal if and only if all the centers $p_i$ are non-simple points. In the same way, we define and characterize \emph{minimal pre-reduction of singularities}.
\begin{rem}
	Following the above definitions, there is no reduction of singularities for foliated surfaces with infinitely many non-simple points.
\end{rem}
The following result is consequence of Seidenberg's Theorem \cite{Sei} and the 
statements in \cite{Can-C-D}.
\begin{teo}
	Let $(\mathcal{M},\mathcal{F})$ a foliated surface. We have that:
	\begin{enumerate}
		\item There is a reduction of singularities of $(\mathcal{M},\mathcal{F})$ if and only if the set of non-simple points is finite. In this case, there is a minimal reduction of singularities.
		\item There is a pre-reduction of singularities of $(\mathcal{M},\mathcal{F})$ if and only if the set of  non-presimple points is finite. In this case, there is a minimal pre-reduction of singularities. \qedhere
	\end{enumerate}
\end{teo}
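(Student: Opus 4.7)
The plan is to reduce both statements to Seidenberg's theorem by passing to germs at individual non-simple (or non-presimple) points. The easy direction in both items is a general principle: if $\pi=\pi_1\circ\cdots\circ\pi_N$ is a composition of point blowing-ups, then $\pi$ is an isomorphism outside a finite set $S\subset |M|$, and since the target has no non-simple (respectively, non-presimple) points, every non-simple (resp.\ non-presimple) point of $(\mathcal{M},\mathcal{F})$ must lie in $S$; hence the set in question is finite.

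For the reverse direction the strategy is local-to-global. At each non-simple point $p$, Seidenberg's theorem produces a finite sequence of point blowing-ups after which the germ $(\mathcal{M},\mathcal{F})_p$ becomes desingularized; in our setting one also has to take into account the divisor $E$ and the adapted notion of simplicity, and the corresponding local statement is the one recalled in \cite{Can-C-D}. Since the non-simple locus is assumed finite, these local resolutions take place above pairwise disjoint centers, so they commute and can be composed into a single global finite sequence of blowing-ups producing a desingularized foliated surface. The pre-reduction case is handled identically, using the presimple analogue of Seidenberg's theorem that is also available in \cite{Can-C-D}.

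For the minimal reduction I would construct $\pi$ greedily: at each stage blow up the (finitely many) points of the current foliated surface that are not simple (respectively, not presimple), and iterate. Termination of this procedure is exactly the existence statement already established. To verify the universal property, I would take an arbitrary competing (pre-)reduction $\bar{\pi}$ and argue by induction on the number of its blowing-ups, using the key elementary observation that any blowing-up in $\bar{\pi}$ whose center is already (pre)simple can be factored out: by the preservation properties of (pre)simple points under blowing-up catalogued in Section~\ref{sec:presimples}, such a blowing-up is independent of the rest of the sequence and can be accumulated into the factor $f$. After removing all superfluous blowing-ups, what remains is a sequence centered only at non-(pre)simple points, which by the universal property of blowing-ups must coincide with an initial segment of $\pi$; this yields the required unique factorization $\bar{\pi}=\pi\circ f$.

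The step I expect to be the main obstacle is not the existence of the reduction --- which is essentially Seidenberg plus a trivial globalization, since the non-simple centers are isolated --- but the minimality assertion. One must carefully track what happens when a blowing-up at an already (pre)simple center is removed and the subsequent blowing-ups of $\bar{\pi}$ lie nearby; the case distinction recorded in Section~\ref{sec:presimples} is precisely what is needed, and weaving these cases into a clean factorization is the only genuinely non-mechanical part of the argument.
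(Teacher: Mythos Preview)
The paper does not actually supply a proof of this theorem: it is stated as a direct consequence of Seidenberg's theorem \cite{Sei} and the material in \cite{Can-C-D}, with no argument beyond that citation. Your sketch is therefore not competing with a proof in the paper but rather filling in what the paper leaves to the references, and the outline you give --- the ``only if'' direction from the fact that a finite composition of point blowing-ups is an isomorphism off a finite set, and the ``if'' direction by applying Seidenberg's theorem germwise at the finitely many bad points and gluing --- is the standard and correct one.

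On the minimality part, your idea is right but the phrasing could be tightened. The paper already records the clean characterization you need: $\pi$ is minimal if and only if every center $p_i$ is non-simple (resp.\ non-presimple). Given this, the factorization argument is cleaner if organized as follows: first show that any two (pre-)reductions whose centers are all non-(pre)simple are canonically isomorphic (blowing-ups at distinct points commute, and the set of non-(pre)simple points at each intermediate stage is intrinsically determined); then, for an arbitrary $\bar\pi$, use the stability of (pre)simple points under blowing-up from Subsection~\ref{sec:presimples} to push every blowing-up with (pre)simple center to the end of the sequence. The phrase ``must coincide with an initial segment of $\pi$'' is slightly off --- after stripping the superfluous blowing-ups, what remains is itself a full (pre-)reduction with only non-(pre)simple centers, hence isomorphic to $\pi$, not merely an initial segment of it.
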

\begin{rem}
	Assume that $(\mathcal{M},\mathcal{F})$ has reduction of singularities and let $\pi$ and $\sigma$ be, respectively, the minimal reduction and pre-reduction of singularities. If
	$$
	\pi:(\mathcal{M}',\mathcal{F}')\rightarrow(\mathcal{M},\mathcal{F}), \quad \sigma:(\mathcal{M}^*,\mathcal{F}^*)\rightarrow(\mathcal{M},\mathcal{F}),
	$$
	then there is a unique factorization $\pi = \sigma \circ f$, where $f:(\mathcal{M}',\mathcal{F}')\rightarrow (\mathcal{M}^*,\mathcal{F}^*)$
	is the composition of a finite sequence of blowing-ups centered at presimple but non-simple points.
\end{rem}
We are interested in foliated surfaces without saddle-nodes after reduction of singularities. In other contexts they correspond to the so-called ``generalized curves'' (see \cite{Cam-N-S}).

A foliated surface $(\mathcal{M},\mathcal{F})$ is \emph{complex hyperbolic} (for short, CH) if there is a reduction of singularities $\pi:(\mathcal{M}',\mathcal{F}')\rightarrow(\mathcal{M},\mathcal{F})$ without saddle-nodes in $(\mathcal{M}',\mathcal{F}')$. Note the following properties:
\begin{enumerate}
	\item If $(\mathcal{M},\mathcal{F})$ is CH, for any reduction of singularities $\bar{\pi}:(\mathcal{M}'',\mathcal{F}'')\rightarrow(\mathcal{M},\mathcal{F})$ there are no saddle-nodes in $(\mathcal{M}'',\mathcal{F}'')$.
	\item Being CH depends just on the foliation $\mathcal{F}$. That is, if $((M,E),\mathcal{F})$ is CH, then any other $((M,D),\mathcal{F})$ is also CH. In particular, it makes sense to say that a point $p\in |M|$ is CH for $\mathcal{F}$ when the germ $\mathcal{F}_p$ is CH, independently of the chosen divisor.
	\item A simple singularity is CH if and only if it is not a saddle-node. The presimple but not simple singularities that are not CH are those presented in 3b) of Subsection \ref{sec:presimples}.
\end{enumerate}
\begin{lema} \label{lema:presimplesCH}
	A point $p$ of a CH-foliated surface $(\mathcal{M},\mathcal{F})$ is presimple if and only if $\nu_p(\mathcal{F},E)=0$.
\end{lema}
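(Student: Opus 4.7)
The plan is to work in local coordinates $(x_1,x_2)$ adapted to $E$ at $p$ and to read everything off a logarithmic generator
\[
\eta=\sum_{i=1}^{e}a_i\frac{dx_i}{x_i}+\sum_{i=e+1}^{2}a_i\,dx_i,\qquad \gcd(a_1,a_2)=1,
\]
of the associated $E$-foliation $\mathcal{L}_{\mathcal{F}}$, so that $\nu_p(\mathcal{F},E)=\min\{\nu_p(a_1),\nu_p(a_2)\}$ and $\omega=x_1^{\varepsilon_1}\cdots x_e^{\varepsilon_e}\eta$ is a generator of $\mathcal{F}$. The argument is then a case analysis on $e=e_p(E)$ and on which components of $E$ are invariant ($\varepsilon_i=1$) or dicritical ($\varepsilon_i=0$).

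For the implication ``presimple implies $\nu_p(\mathcal{F},E)=0$'', I split according to whether $p\in\text{Sing}(\mathcal{F})$. When $p\notin\text{Sing}(\mathcal{F})$, the normal crossings of the unique invariant branch $\Gamma$ with $E$ means in each configuration that $\Gamma$ is transverse to every component of $E$ through $p$; reading this transversality off from $\eta$ forces at least one of $a_1,a_2$ to be a unit at $p$. When $p\in\text{Sing}(\mathcal{F})$, the presimple definition gives $e_p(E)\geq 1$, $e_p(E_{\text{dic}})=0$ and a tangent germ $\xi$ with $L_\xi$ non-nilpotent. Here the CH hypothesis is decisive: by the trichotomy of Subsection~\ref{sec:presimples}, the presimple-but-not-simple cases $3(a)$ and $3(c)$ have two nonzero eigenvalues, whereas case $3(b)$ (non-diagonalizable Jordan block) and the simple saddle-node case are excluded by CH, so both eigenvalues of $L_\xi$ are nonzero. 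A direct computation of $\xi=-a_2x_1\partial_{x_1}+a_1\partial_{x_2}$ in the trace case ($e=1$, $\varepsilon_1=1$) or $\xi=-a_2x_1\partial_{x_1}+a_1x_2\partial_{x_2}$ in the corner case ($e=2$, $\varepsilon_1=\varepsilon_2=1$) then yields eigenvalues $\{-a_2(p),\,\partial_{x_2}a_1(p)\}$ or $\{-a_2(p),\,a_1(p)\}$, respectively; either way at least one of $a_1(p),a_2(p)$ is nonzero, hence $\nu_p(\mathcal{F},E)=0$.

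For the converse, assume $\nu_p(\mathcal{F},E)=0$. If $p\notin\text{Sing}(\mathcal{F})$, the same coordinate reading shows that the invariant branch is transverse to every divisor component through $p$, so $p\notin\text{Sing}(\mathcal{F},E)$ and $p$ is presimple by definition. If $p\in\text{Sing}(\mathcal{F})$, I first rule out dicritical components through $p$: for each dicritical component $x_i=0$, holomorphicity of $\omega$ forces $x_i\mid a_i$, hence $\nu_p(a_i)\geq 1$; combined with $\omega(p)=0$, this is incompatible with $\nu_p(\mathcal{F},E)=0$ unless $e_p(E_{\text{dic}})=0$. The same linear-part computation as before then shows that the unit coefficient of $\eta$ contributes a nonzero eigenvalue to $L_\xi$, so $L_\xi$ is non-nilpotent and $p$ is presimple.

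The main obstacle is the singular case of the forward direction, where the CH assumption is used essentially: without it, a presimple saddle-node or Jordan-type point could have $a_1(p)=a_2(p)=0$ with non-nilpotency of $L_\xi$ saved by first-order derivatives of the $a_i$'s, making $\nu_p(\mathcal{F},E)\geq 1$ compatible with being presimple. The classification of presimple blow-ups in Subsection~\ref{sec:presimples} is exactly what rules this out in the CH setting.
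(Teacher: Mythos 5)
Your proof is correct and follows essentially the same route as the paper's: adapted coordinates, a case analysis on the local configuration of $E$ (invariant versus dicritical components), the explicit tangent vector field whose linear part has the coefficients of $\eta$ as eigenvalues, and the CH hypothesis used exactly to exclude the saddle-node in the trace-singular case. The only (harmless) imprecision is the claim that normal crossings forces $\Gamma$ to be \emph{transverse} to every component of $E$: when a component through a regular point is invariant, $\Gamma$ must in fact coincide with it, but in those configurations regularity of $\omega$ alone already makes the relevant coefficient a unit, so your conclusion stands.
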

\begin{proof}
	If $e=e_p(E)=0$, then $p$ is presimple if and only if $p$ is nonsingular, and we are done. If $e=1$, take a local generator $\eta=a_1dx_1/x_1+a_2dx_2$ of $\mathcal{F}$ adapted to $E$. We distinguish two cases:
	
	\begin{enumerate}
		\item [1a)] Dicritical case, that is $E=(x_1=0)$ is dicritical. This is equivalent to say that $a_1=x_1\tilde{a_1}$, in particular, we have $\nu_p(a_1)>0$. A local generator of $\mathcal{F}$ is given by $\omega=\tilde{a_1}dx_1+a_2dx_2$. In this case, the point $p$ is presimple if and only if it is nonsingular and $\mathcal{F}, E$ have normal crossings at $p$. That happens if and only if $\nu_p(a_2)=0$.	
		\item [1b)] Invariant case. A local generator of $\mathcal{F}$ is given by $\omega=a_1dx_1+x_1a_2dx_2$. The point $p$ is nonsingular with normal crossings if and only if $\nu_p(a_1)=0$. The point $p$ is a presimple singularity if and only if $p$ is a saddle-node (not allowed, because we are in the CH-situation) or $\nu_p(a_2)=0$.
	\end{enumerate}
	If $e=2$, consider a local generator $\eta=a_1dx_1/x_1+a_2dx_2/x_2$ of $\mathcal{F}$ adapted to $E$, where $E=(x_1x_2=0)$ locally at $p$. There are three possibilities:
	\begin{enumerate}
		\item [2a)] $e_p(E_{\text{dic}})=2$. We know that $p$ is not presimple. On the other hand, we have that $x_i$ divides $a_i$ for $i=1,2$, hence $\nu_p(a_1,a_2)>0$.
		\item [2b)] $e_p(E_{\text{dic}})=1$. Suppose that $(x_1=0)$ is invariant and that $(x_2=0)$ is dicritical. We have that $a_2=x_2\tilde{a}_2$ and that $\omega=a_1dx_1+x_1\tilde{a}_2dx_2$ is a local generator of $\mathcal{F}$. The point $p$ is presimple if and only if it is a nonsingular point and $\mathcal{F}, E$ have normal crossings at $p$. This happens if and only if $\nu_p(a_1)=0$. We end by noting that $\nu_p(a_1)=0$ if and only if $\nu_p(a_1,a_2)=0$, since $\nu_p(a_2)>0$.
		\item [2c)] $e_p(E_{\text{dic}})=0$. We have that $\omega=x_2a_1dx_1+x_1a_2dx_2$ is a local generator of $\mathcal{F}$ and hence $p$ is a singular point. The linear part of a germ of vector field tangent to $\mathcal{F}$ is diagonal with eigenvalues $\lambda_i=a_i(0)$, for $i=1,2$. As a consequence, the point $p$ is presimple if and only if $\nu_p(a_1,a_2)=0$. \qedhere
	\end{enumerate}
\end{proof}
\subsection{Combinatorial blowing-ups}
The concept of toric type foliated surface has been introduced in \cite{Cam-C}.

Let $(\mathcal{M},\mathcal{F})$ be a foliated surface and let us consider the blowing-up $\pi:(\mathcal{M}', \mathcal{F}') \rightarrow (\mathcal{M}, \mathcal{F})$ centered at a point $p\in |M|$. We say that $\pi$ is a \emph{combinatorial blowing-up} if $e_p(E)=2$. The composition of a finite sequence of blowing-ups is \emph{combinatorial} if each blowing-up is combinatorial.
\begin{defin}
	A foliated surface $(\mathcal{M},\mathcal{F})$ is of \emph{toric type} if it admits a combinatorial reduction of singularities. Analogously, we say that $(\mathcal{M},\mathcal{F})$ is of \emph{weak toric type} if it has a combinatorial pre-reduction of singularities.
\end{defin}
The foliated surface $(\mathcal{M},\mathcal{F})$ is called of \emph{toric type at a point $p\in |M|$} if the germ $(\mathcal{M},\mathcal{F})_p$ is of toric type. Analogously, it is called of \emph{weak toric type at $p$} if the germ $(\mathcal{M},\mathcal{F})_p$ is of weak toric type.
\begin{rem} \label{rem:tipotoricoenpuntosnoesquina}
	Given a point $p$ with $e_p(E)\leq 1$, we have that $(\mathcal{M},\mathcal{F})$ is of toric type (respectively of weak toric type) at $p$ if and only if $p$ is a simple (respectively presimple) point for $(\mathcal{M},\mathcal{F})$.
\end{rem}
Recall that the divisor $E$ has finitely many irreducible components. Thus, there are only finitely many points with $e_p(E)=2$. As a consequence, the foliated surface $(\mathcal{M},\mathcal{F})$ is of toric type if and only if the property holds at every point $p\in |M|$. We have the same comment for the weak toric type property.

\begin{rem} \label{rem:presimpletipotorico}
	If $p$ is a corner type presimple singularity of $(\mathcal{M},\mathcal{F})$, we have that $(\mathcal{M},\mathcal{F})$ is of toric type at $p$, in view of the behaviour of presimple singularities described in Subsection \ref{sec:presimples}. Even more, all the singularities appearing after the minimal reduction of singularities of the germ $(\mathcal{M},\mathcal{F})_p$ are corner type simple points.
\end{rem}

\subsection{Isolated invariant branches}
The concept of isolated invariant branch is useful for choosing finitely many representative invariant branches, in case dicritical components arise after reduction of singularities. We give the precise definition below.

\begin{defin}
	Consider an invariant branch $(\Gamma,p)$ of a foliated surface $(\mathcal{M},\mathcal{F})$. We say that $(\Gamma,p)$ is \emph{isolated for $(\mathcal{M},\mathcal{F})$} if the following properties hold:
	\begin{itemize}
		\item $(\Gamma,p)\not\subset (E,p)$.
		\item For every composition $\pi:(\mathcal{M}',\mathcal{F}')\rightarrow (\mathcal{M},\mathcal{F})$ of a finite sequence of blowing-ups, we have that $p'\in \text{Sing}(\mathcal{F}',E')$, where $(\Gamma',p')$ is the strict transform of $(\Gamma,p)$ by $\pi$. \qedhere
	\end{itemize}
\end{defin}
\begin{rem}
	Note that $p \in \text{Sing}(\mathcal{F},E)$, just by taking $\pi$ the identity. Besides, it is enough to consider blowing-ups centered at the infinitely near points of $(\Gamma,p)$.
\end{rem}
The property of being isolated is stable under blowing-ups. In the following statement we show that it is also stable by combinatorial blowing-downs.
\begin{prop}  \label{prop:estabilidadaisladas}
	Let us consider a combinatorial blowing-up $\pi:(\mathcal{M}',\mathcal{F}')\rightarrow (\mathcal{M}, \mathcal{F})$ between foliated surfaces. If $(\Gamma',p')$ is an isolated invariant branch for $(\mathcal{M}',\mathcal{F}')$, we have that $(\Gamma,p)$ is an isolated invariant branch for $(\mathcal{M}, \mathcal{F})$, where $(\Gamma,p)$ is the image of $(\Gamma',p')$ by $\pi$.
\end{prop}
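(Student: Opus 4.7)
The plan is to verify the three defining conditions for $(\Gamma, p)$ to be an isolated invariant branch of $(\mathcal{M}, \mathcal{F})$: (i) invariance for $\mathcal{F}$, (ii) $(\Gamma,p) \not\subset (E,p)$, and (iii) stability under arbitrary compositions $\tau : (\mathcal{M}'', \mathcal{F}'') \to (\mathcal{M}, \mathcal{F})$ of blow-ups. The first two are quick. Invariance transfers through the birational morphism $\pi$: the defining relation $\omega' \wedge df' = f' \alpha'$ on $\mathcal{M}'$ descends to $\omega \wedge df = f \alpha$ on $\mathcal{M}$ wherever $\pi$ is an isomorphism, and extends algebraically across $p$. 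For (ii), if $(\Gamma,p) \subset (E,p)$, then $\Gamma$ is locally some component $E_i$ of $E$, whose strict transform $\bar{E}_i$ is a component of $E'$ passing through $p'$, forcing $(\Gamma', p') \subset (E', p')$ and contradicting the isolation of $(\Gamma', p')$.

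For (iii), I invoke the remark reducing the verification to sequences $\tau$ of blow-ups centered at infinitely near points of $(\Gamma, p)$. Such a $\tau$ is either the identity or has its first step equal to the blow-up at $p$, which is exactly $\pi$. In the latter case, write $\tau = \pi \circ \rho$ with $\rho : \mathcal{M}'' \to \mathcal{M}'$ a composition of blow-ups; the strict transform of $(\Gamma, p)$ via $\tau$ coincides with the strict transform of $(\Gamma', p')$ via $\rho$, and the isolation hypothesis on $(\Gamma', p')$ delivers $p'' \in \text{Sing}(\mathcal{F}'', E'')$ directly.

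The only delicate case is $\tau$ equal to the identity, where we need $p \in \text{Sing}(\mathcal{F}, E)$. This is where the combinatoriality hypothesis $e_p(E) = 2$ becomes essential. If $p$ were not in $\text{Sing}(\mathcal{F}, E)$, then $\mathcal{F}$ would be nonsingular at $p$ with normal crossings against $E$; its unique invariant branch through $p$ would then have to coincide with $(\Gamma, p)$, and together with the two components of $E$ meeting at $p$ would produce three pairwise transversal branches at $p$, violating the normal-crossings condition. The contradiction forces $p \in \text{Sing}(\mathcal{F}, E)$. This last step carries the essential content of the proposition, and it pinpoints why combinatoriality cannot be relaxed: at a center with $e_p(E)\le 1$ the branch $(\Gamma,p)$ could pass through $p$ transversely without producing any singularity, and the statement would genuinely fail.
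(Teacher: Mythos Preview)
Your proof is correct and follows essentially the same approach as the paper's own proof: both reduce to showing $p\in\text{Sing}(\mathcal{F},E)$ and derive a contradiction from $e_p(E)=2$ together with the normal-crossings condition, which forces the unique invariant branch to lie inside $E$. You are simply more explicit than the paper about the factorization $\tau=\pi\circ\rho$ (which the paper absorbs into the phrase ``If $p\in\text{Sing}(\mathcal{F},E)$, we are done''), and your phrasing ``three pairwise transversal branches'' is slightly imprecise---transversality of $\Gamma$ to both components of $E$ is not guaranteed a priori---but the contradiction with normal crossings in dimension two needs only that there are three distinct branches through $p$, so the argument stands.
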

\begin{proof}
	Suppose that $p$ is the center of the blowing-up, otherwise we are done. If $(\Gamma',p')$ is isolated, in particular $(\Gamma',p') \not\subset (E',p')$ and then also $(\Gamma,p)\not\subset (E,p)$. If $p \in \text{Sing}(\mathcal{F},E)$, we are done. Assume that $p \notin \text{Sing}(\mathcal{F},E)$. Since $p$ is a regular point, we get that $(\Gamma,p)$ is the only invariant branch through $p$. Moreover, we have that $e_p(E)=2$ and that $\mathcal{F}, E$ have normal crossings at $p$. Then $(\Gamma,p)$ is contained in $E$ and this is a contradiction.
\end{proof}
The following assertions give a description of the isolated invariant branches at presimple points.
\begin{lema} \label{lema:ramasaisladaspuntostraza}
	Consider a foliated surface {\rm $(\mathcal{M},\mathcal{F})$} and $\pi: (\mathcal{M}',\mathcal{F}') \rightarrow (\mathcal{M},\mathcal{F})$ a reduction of singularities. Let $(\Gamma,p)$ be an isolated branch of $(\mathcal{M},\mathcal{F})$ and let $(\Gamma',p')$ be the strict transform of $(\Gamma,p)$ by $\pi$. We have that $p'$ is a trace type simple singularity.
\end{lema}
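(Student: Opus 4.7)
The plan is to deduce the two conclusions (simple and trace type) from two quite different observations: the definition of isolated branch pins down that $p'$ is in the adapted singular locus, while the nature of separatrices at a corner simple singularity rules out the corner case.

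First, I would apply the definition of isolated invariant branch directly to $\pi$ itself: this gives $p' \in \mathrm{Sing}(\mathcal{F}',E')$. Since $\pi$ is a reduction of singularities, $(\mathcal{M}',\mathcal{F}')$ is desingularized, so Remark \ref{rem:singsingadapt} gives $\mathrm{Sing}(\mathcal{F}',E')=\mathrm{Sing}(\mathcal{F}')$, and by definition every singular point of $(\mathcal{M}',\mathcal{F}')$ is simple. Hence $p'$ is already known to be a simple singularity; the only thing left is to exclude that it is of corner type.

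Next I would argue by contradiction: suppose $e_{p'}(E')=2$, so two local irreducible components $E'_i,E'_j$ of $E'$ pass through $p'$. By the remark in Subsection \ref{sec:presimples}, a presimple (in particular simple) singularity has no dicritical component through it, so both $E'_i$ and $E'_j$ are invariant curves of $\mathcal{F}'$, and their germs at $p'$ provide two invariant branches of $\mathcal{F}'$ contained in $E'$. On the other hand, a simple singularity of a foliated surface admits at most two analytic invariant branches (these are the separatrices at a simple point: two transverse ones in the non-saddle-node case, and the strong/weak separatrices in the saddle-node case, at most two of which are convergent). Therefore the germs of $E'_i$ and $E'_j$ at $p'$ exhaust the analytic invariant branches of $\mathcal{F}'$ at $p'$.

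Finally I would use that $(\Gamma',p')$ is itself an analytic invariant branch at $p'$, and that $(\Gamma',p') \not\subset (E',p')$: this latter fact holds because strict transform commutes with the inclusion into the generic (non-divisorial) locus, so from $(\Gamma,p)\not\subset(E,p)$ one gets $(\Gamma',p')\not\subset(E',p')$. This forces $(\Gamma',p')$ to be a third, distinct invariant branch at $p'$, contradicting the previous paragraph. Hence $e_{p'}(E')\neq 2$, and since $p'$ is singular we must have $e_{p'}(E')=1$, i.e., $p'$ is of trace type.

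The only step that is not completely formal is the bound ``at most two analytic invariant branches at a simple singularity''; everything else is bookkeeping with the definitions. I would cite this classical fact from the references (e.g.\ \cite{Can-C-D}) rather than reprove it, since it follows from the standard classification of simple singularities via their linear part.
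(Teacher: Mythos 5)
Your proposal is correct and follows essentially the same route as the paper: the definition of isolated branch gives $p'\in \mathrm{Sing}(\mathcal{F}',E')=\mathrm{Sing}(\mathcal{F}')$ with $(\Gamma',p')\not\subset (E',p')$, and the corner case is excluded because at a corner simple singularity the only invariant branches are the two divisor components. The paper simply asserts this last fact in one line, whereas you unpack it via the classification of simple singularities; the content is the same.
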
	
\begin{proof}
	By definition, we know that $p'\in \text{Sing}(\mathcal{F}',E')$ and $(\Gamma',p') \not\subset (E',p')$. Moreover, we have
	$$
	\text{Sing}(\mathcal{F}',E')=\text{Sing}(\mathcal{F}'),
	$$
	because $(\mathcal{M}',\mathcal{F}')$ is desingularized. If $p'$ is of corner type, the only invariant branches through it are contained in $E'$. Then $p'$ is a trace type simple singularity.
\end{proof}
\begin{lema} \label{lema:aisladaspresimple}
	Let $p\in \text{Sing}(\mathcal{F})$ be a presimple singularity of a foliated surface {\rm $(\mathcal{M},\mathcal{F})$}. If $p$ is of corner type, there are no isolated invariant branches through it. If $p$ is of trace type, there is at most one isolated invariant branch through it; when it exists, it is nonsingular, transversal to the divisor and any other nonsingular invariant branch is tangent to the divisor.
\end{lema}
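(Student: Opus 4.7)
\emph{Corner case.} I would combine Remark \ref{rem:presimpletipotorico}, which says that at a corner type presimple singularity $(\mathcal{M},\mathcal{F})_p$ is of toric type and every singularity of its minimal reduction is a corner type simple point, with Lemma \ref{lema:ramasaisladaspuntostraza}: if an isolated invariant branch $(\Gamma,p)$ existed, then its strict transform would lie at a trace type simple singularity of the reduction, contradicting the previous statement.

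\emph{Trace case.} Let $E_1$ denote the unique invariant component of $E$ through $p$. I would induct on the number of blow-ups in the minimal reduction of $(\mathcal{M},\mathcal{F})_p$. If $p$ is already simple, then besides $E_1$ there is at most one further invariant branch through $p$, necessarily nonsingular and transverse to $E_1$, and it is isolated whenever it exists since $p\in\text{Sing}(\mathcal{F},E)$. If $p$ is presimple but not simple, I blow up at $p$ and treat the subcases 3a), 3b), 3c) of Subsection \ref{sec:presimples}. In case 3a) the exceptional divisor $D$ is dicritical with $\text{Sing}(\mathcal{F}')\cap D=\emptyset$, and the corner of $D$ with $E_1$'s strict transform lies outside $\text{Sing}(\mathcal{F}',E')$ since the leaf there is $E_1$'s strict transform; no isolated invariant branches at $p$ survive. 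In case 3b) the only singularity on $D$ is a saddle-node corner, so the already-proved corner case rules out isolated branches. Case 3c) is the central one: $D$ is invariant and contains a corner $q_c$ (of corner type) at the intersection with $E_1$'s strict transform, and a trace $q_t$ (of trace type) elsewhere, each being simple or presimple. The strict transform of any isolated branch at $p$ meets $D$ at a point of $\text{Sing}(\mathcal{F}',E')$, hence at $q_c$ or $q_t$; the corner case excludes $q_c$, so it passes through $q_t$, where the induction hypothesis (or the simple base case) gives at most one isolated branch, nonsingular and transverse to $D$. Pulling back through the blow-up yields at most one isolated branch at $p$, nonsingular and transverse to $E_1$.

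\emph{Other nonsingular branches.} Assume the isolated branch $\Gamma$ exists and let $\Upsilon\neq\Gamma$ be a nonsingular invariant branch through $p$. If $\Upsilon\subset E$ then $\Upsilon=E_1$ is trivially tangent to the divisor, so we may assume $\Upsilon\not\subset E$. In the simple base case every invariant branch through $p$ is either $E_1$ or $\Gamma$, so no such $\Upsilon$ exists and the claim is vacuous. Otherwise we are in case 3c) after blow-up, with $D$ invariant. The strict transform $\Upsilon'$ meets $D$ at a single point $p''$. Because the unique leaf of $\mathcal{F}'$ through a nonsingular point of $D$ is $D$ itself and $\Upsilon'\neq D$, we must have $p''\in\text{Sing}(\mathcal{F}')\cap D=\{q_c,q_t\}$. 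The choice $p''=q_t$ is excluded, since then $\Upsilon'$ would coincide with the unique transverse invariant branch at $q_t$, forcing $\Upsilon=\Gamma$; so $p''=q_c$, meaning $\Upsilon$ is tangent to $E_1$ at $p$.

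The main obstacle I foresee is the inductive step of case 3c): the corner $q_c$ can itself be a presimple but not simple point of corner type, so the corner case must be available as a prerequisite for the induction at $q_t$, and one must verify that the reduction complexity strictly decreases in passing from $p$ to $q_t$. Once this bookkeeping is in place, nonsingularity, transversality, and the dichotomy for other nonsingular invariant branches all follow directly from the blow-up formulas.
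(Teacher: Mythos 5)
Your proposal is correct and follows essentially the same route as the paper: the corner case via Remark \ref{rem:presimpletipotorico} and Lemma \ref{lema:ramasaisladaspuntostraza}, and the trace case by blowing up, using the corner case to force the strict transform through the unique trace type singularity on the exceptional divisor, and recursing (the paper phrases your induction as "the situation repeats" along the infinitely near points, and obtains uniqueness of the isolated branch as a corollary of the tangency dichotomy rather than directly from the induction). The extra bookkeeping you flag — availability of the corner case at $q_c$ and strict decrease of the reduction length at $q_t$ — is indeed all that is needed and poses no difficulty.
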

\begin{proof}
	Assume first that $p$ is of corner type and let us find a contradiction with the existence of an isolated invariant branch $(\Gamma,p)$. By Remark \ref{rem:presimpletipotorico}, we know that the minimal reduction of singularities of the germ $(\mathcal{M},\mathcal{F})_p$ produces only singularities that are of corner type. In view of Lemma \ref{lema:ramasaisladaspuntostraza}, the strict transform of $(\Gamma,p)$ passes through a trace type simple singularity and this is not possible.
	
	Assume now that $p$ is a trace type presimple singularity. Recall that $e_p(E)=e_p(E_{\text{inv}})=1$. Suppose that there is an isolated branch $(\Gamma,p)$. Consider the blowing-up $\pi:(\mathcal{M}',\mathcal{F}')\rightarrow (\mathcal{M},\mathcal{F})$ centered at $p$. Let $(\Gamma',p')$ be the strict transform of $(\Gamma,p)$. We know that $p' \in \text{Sing}(\mathcal{F}',E')=\text{Sing}(\mathcal{F}')$. Since $(\Gamma',p')$ is an isolated invariant branch, we obtain that $p'$ is not a corner type point, hence it is a trace type presimple singularity of $(\mathcal{M}',\mathcal{F}')$. In particular, we have that $e_{p'}(E')=e_{p'}(E'_{\text{inv}})=1$. Note also that $E'=\pi^{-1}(p)$, locally at $p'$.
	
	The above arguments show that $(\Gamma,p)$ is nonsingular and transversal to $E$. Indeed, this is a direct consequence of the fact that the infinitely near points of $(\Gamma,p)$  are never over the strict transform of the precedent exceptional divisor.
	
	Let us prove that any other nonsingular invariant branch is tangent to $E$. Equivalently, if $(\Gamma_1,p)$ is a nonsingular invariant branch transversal to $E$, let us show that $(\Gamma_1,p)=(\Gamma,p)$. Denote by $(\Gamma_1',p_1')$ to the strict transform of $(\Gamma_1,p)$ by $\pi$. It is enough to prove that $p_1'=p'$; in this case, the situation repeats at $p'$,  we conclude that $(\Gamma_1,p)$ and $(\Gamma,p)$ have the same infinitely near points and thus they coincide. Since $(\Gamma_1,p)$ is transversal to $E$, we have that $p_1'$ does not belong to the strict transform of $E$. Moreover, $p_1'\in \text{Sing}(\mathcal{F'})$, because both $(\Gamma'_1,p'_1)$ and $\pi^{-1}(p)$ define invariant branches of $\mathcal{F'}$ at $p'_1$. Then, the only trace type singularity of $(\mathcal{M}',\mathcal{F}')$ in $\pi^{-1}(p)$ is $p_1'$; this means that $p_1'=p'$.
	
	It remains to show that $(\Gamma,p)$ is the only isolated invariant branch. We know that any isolated invariant branch must be nonsingular and transversal to $E$, then it is necessarily $(\Gamma,p)$.
\end{proof}	
\begin{cor}
	Let $(\Gamma,p)$ be a smooth invariant branch transversal to $E$ through a trace type presimple singularity. If there is an isolated branch for $(\mathcal{M},\mathcal{F})$ through $p$, it is necessarily $(\Gamma,p)$.
\end{cor}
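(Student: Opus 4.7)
The corollary is essentially a bookkeeping consequence of Lemma \ref{lema:aisladaspresimple}, so the plan is short and the main work has already been done. I will simply extract the required uniqueness from the information the lemma provides about trace type presimple singularities.

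First, I would invoke Lemma \ref{lema:aisladaspresimple} under the hypothesis that an isolated invariant branch through $p$ exists. Call this isolated branch $(\Gamma_0,p)$. Since $p$ is a trace type presimple singularity, the lemma tells us two things: $(\Gamma_0,p)$ is itself nonsingular and transversal to $E$, and moreover every \emph{other} nonsingular invariant branch through $p$ must be tangent to $E$.

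Next, I compare $(\Gamma,p)$ with $(\Gamma_0,p)$. The hypothesis on $(\Gamma,p)$ is precisely that it is smooth and transversal to $E$. If it were the case that $(\Gamma,p)\neq (\Gamma_0,p)$, then by the second half of the dichotomy above $(\Gamma,p)$ would have to be tangent to $E$, contradicting transversality. Hence $(\Gamma,p)=(\Gamma_0,p)$, which is what we want.

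There is no real obstacle here beyond making sure the quantifiers in the trace-case portion of Lemma \ref{lema:aisladaspresimple} are applied correctly; the whole argument is a one-line consequence of the uniqueness statement already proved there. If anything, the only subtlety worth flagging is that we do not need to verify independently that $(\Gamma,p)$ itself is invariant-branch-isolated — we only need to identify it with the unique isolated branch produced by the lemma, which is exactly what transversality forces.
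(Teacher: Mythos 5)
Your argument is correct and is exactly the intended one: the paper states this corollary without proof precisely because it is the immediate consequence of Lemma \ref{lema:aisladaspresimple} that you spell out (the isolated branch is nonsingular and transversal, and any other nonsingular invariant branch is tangent to $E$, so transversality of $(\Gamma,p)$ forces the identification). Nothing is missing.
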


\begin{rem} \label{rem:simpleaislada}
	If $p$ is a trace type non saddle-node simple singularity, there is exactly one isolated invariant branch through $p$. When it is a saddle-node, it is possible to have a formal non-convergent invariant branch, that is also isolated (in the formal sense).
\end{rem}
\begin{rem}
	Consider a foliated surface $(\mathcal{M}, \mathcal{F})$ and a point $p\in \text{Sing}(\mathcal{F},E)$. If there are only finitely many invariant branches through $p$ (equivalently, there are no dicritical components created after reduction of singularities over $p$), then each invariant branch $(\Gamma,p) \not\subset (E,p)$ is isolated.
\end{rem}

\section{Pairs of Laurent Polynomials in General Position}
We recall a result about the number of solution of a Laurent polynomials system looking at the mixed area of the convex polytopes associated to it (see \cite{Ber, Kus}). In Section \ref{sec:Newton Non-degenerate Foliations on Projective Toric Surfaces}, we apply it in our study of Newton non-degenerate foliations in projective toric ambient surfaces.

A \emph{convex polytope} $\Delta \subset \mathbb{R}^2$ is the convex hull of a finite set of points of $\mathbb{R}^2$. We denote the area of $\Delta$ by $\text{Ar}(\Delta)\in \mathbb{R}_{\geq 0}$. The \emph{mixed area} MA$(\Delta_1,\Delta_2)$ of two convex polytopes $\Delta_1,\Delta_2 \subset \mathbb{R}^2$ is given by $\text{MA}(\Delta_1,\Delta_2)=\text{Ar}(\Delta_1 + \Delta_2)-\text{Ar}(\Delta_1)-\text{Ar}(\Delta_2)$,
where $\Delta_1 + \Delta_2$ denotes the Minkowski sum.
\begin{rem} \label{rem:areamixta0}
	The mixed area of two convex polytopes is zero if and only if either one of them is a single point or they are parallel segments.
\end{rem}
The ring of Laurent polynomials $\mathbb{C}[u_1,u_2,u_1^{-1},u_2^{-1}]$ gives the regular functions of the complex torus $(\mathbb{C}^*)^2=(\mathbb{C}\setminus \{0\})^2$. The \emph{convex polytope $\Delta(f)$ of a Laurent polynomial $f$} is defined by
$$
\Delta(f)=\text{convex hull}\Big(\{(i,j)\in \mathbb{Z}^2; \; f_{ij}\neq 0\}\Big), \quad f=\sum\nolimits_{(i,j)}f_{ij}u_1^iu_2^j.
$$
The \emph{support-restriction $f_C$ of $f$ to a subset $C\subset \mathbb{Z}^2$} is $f_C=\sum\nolimits_{(i,j)\in C}f_{ij}u_1^iu_2^j$.

We call \emph{weight vectors} to the elements of the set $\mathcal{W}=\{(p,q)\in \mathbb{Z} \times \mathbb{Z}_{>0}; \; p\mathbb{Z}+q\mathbb{Z}=\mathbb{Z}\} \cup \{(1,0)\}$.
Note that, there is a bijection $\mathcal{W}\rightarrow \mathbb{Q} \cup \{\infty\}$, given by $(p,q)\mapsto -p/q$ (assuming $-1/0=\infty$). We say that a Laurent polynomial $F\ne 0$ is \emph{quasi-homogeneous with weight vector $(p,q)$} if there is an $r \in \mathbb{Z}$ such that $F(t^pu_1,t^qu_2)=t^{r}F(u_1,u_2)$. The integer $r$ is called the \emph{quasi-homogeneous degree of $F$}. In this case, there is a decomposition
$$ \label{eq:casi-homogeneopositivo}
F=cu_1^{\tau_1}u_2^{\tau_2}\prod\nolimits_{j=1}^N(u_2^p-\alpha_ju_1^q); \quad \; (\tau_1,\tau_2) \in \mathbb{Z}^2, \;c, \alpha_j \in \mathbb{C}^*,\, j=1,2,\ldots N.
$$
Conversely, such a decomposition provides a quasi-homogeneous Laurent polynomial. Moreover, we can see that $F$ is quasi-homogeneous with weight vector $(p,q)$ and degree  $r$ if and only if $\Delta(F)$ is a segment contained in the line of equation $pi+qj=r$. In particular, a single monomial is quasi-homogeneous for any weight vector.

The following definitions can be found in \cite{Ber, Kus}.
\begin{defin} \label{def:nodegenerado}
	A pair $(F_1,F_2)$ of quasi-homogeneous Laurent polynomials with weight vector $(p,q)$ is \emph{non-degenerate} if $\alpha_{1j}\neq \alpha_{2k}$ for every $j,k$, where
	$$
	F_i=c_iu_1^{\tau_{i1}}u_2^{\tau_{i2}}\prod\nolimits_{j=1}^{N_i}(u_2^p-\alpha_{ij}u_1^q), \quad i=1,2.
	$$
	Otherwise, we say that $(F_1,F_2)$ is \emph{degenerate}.
\end{defin}
\begin{defin} \label{def:posiciongeneral}
	A pair of arbitrary Laurent polynomials $(f_1,f_2)$ is in \emph{general position}, if the pairs $(f_{1L}, f_{2L})$ are non-degenerate for every side $L$ of $\Delta(f_1,f_2)=\text{convex hull}(\Delta(f_1)\cup \Delta(f_2))$.
\end{defin}
\begin{rem} \label{rem:cambiomonomial}
	Consider two Laurent polynomials $F(u_1,u_2),G(u_1,u_2)$ and three monomials $u^a=u_1^{a_1}u_2^{a_2}$, $u^b=u_1^{b_1}u_2^{b_2}$,  $u^c=u_1^{c_1}u_2^{c_2}$, where $a,b,c \in \mathbb{Z}^2$ and $a_1b_2-a_2b_1\in\{-1,1\}$. We have the following properties:
	\begin{enumerate}
		\item The Laurent polynomial $F$ is quasi-homogeneous if and only if $u^cF$ is quasi-homogeneous. Moreover, both have the same weight vector.
		\item The Laurent polynomial $F(u_1,u_2)$ is quasi-homogeneous with weight vector $(p,q)$ if and only $F(u^a,u^b)$ is quasi-homogeneous with weight vector (up to sign) $(pb_2-qa_2,qa_1-pb_1)$.
		\item The pair $(F(u_1,u_2),G(u_1,u_2))$ is non-degenerate if and only if the pair $(u^cF(u^a,u^b),u^cG(u^a,u^b))$ is non-degenerate.\qedhere		
	\end{enumerate}
\end{rem}
\begin{rem}\label{rem:suma}
	If the pair $(F_1,F_2)$ is non-degenerate and both $F_1,F_2$ have the same quasi-homogeneous degree, then $(F_1+F_2,F_2)$ is also non-degenerate.
\end{rem}
\begin{rem} \label{rem:una de las restricciones es cero}
	Note that $(f_{1L}, f_{2L})$ is degenerate if $f_{1L}=0$ or $f_{2L}=0$.
\end{rem}
\begin{teo} \label{teo:KKB}
	(See \cite{Ber, Kus}) Let $(f_1,f_1)$ be a pair of Laurent polynomials in general position. The number of solutions in $(\mathbb{C}^*)^2$ of the system $f_1=f_2=0$ is equal to the mixed area $\text{MA}(\Delta(f_1),\Delta(f_2))$.
\end{teo}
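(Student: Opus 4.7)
My plan is to prove this through a toric compactification. The heuristic is that the roots of $f_1=f_2=0$ in $(\mathbb{C}^*)^2$ are exactly the intersection points of two divisors on a suitable projective toric surface, and the Newton-polytope combinatorics compute this intersection number. First I would use the monomial change of coordinates of Remark \ref{rem:cambiomonomial} to translate $\Delta(f_1,f_2)$ into the non-negative orthant $\mathbb{Z}_{\geq 0}^2$; this is harmless because (1) monomial changes preserve both non-degeneracy and mixed areas, and (2) the solution count in $(\mathbb{C}^*)^2$ is invariant. After this normalization, $f_1$ and $f_2$ become honest polynomials on $\mathbb{C}^2$.

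Next I would construct the projective toric surface $X$ whose fan is the normal fan of $P=\Delta(f_1,f_2)$ (refined if necessary to make $X$ nonsingular). The Laurent polynomials $f_1,f_2$ extend to sections of line bundles $\mathcal{L}_1,\mathcal{L}_2$ on $X$, giving effective Cartier divisors $D_1,D_2$ that restrict to $V(f_1),V(f_2)$ on the dense torus. Each edge $L$ of $P$ corresponds to a one-dimensional boundary orbit $O_L$, and on the closure $\overline{O_L}$ the divisor $D_i$ is cut out precisely by the support-restriction $f_{iL}$ (viewed as a quasi-homogeneous Laurent polynomial after appropriate trivialization). The heart of the argument is the \emph{boundary vanishing lemma}: the general-position hypothesis (Definition \ref{def:posiciongeneral}) forces $D_1\cap D_2\cap \overline{O_L}=\emptyset$ for every edge $L$. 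Indeed, the factorization of quasi-homogeneous polynomials recalled before Definition \ref{def:nodegenerado} shows that $V(f_{1L})\cap V(f_{2L})\cap O_L$ is empty exactly when no $\alpha_{1j}$ coincides with any $\alpha_{2k}$. A separate argument handles the zero-dimensional orbits (vertices of $P$), where any would-be intersection is ruled out by the non-vanishing of leading coefficients built into Definition \ref{def:nodegenerado}.

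Once the boundary is clear of common zeros, the Bertini-type consequence of non-degeneracy gives that $D_1$ and $D_2$ meet transversely inside the torus, so the number of solutions equals the intersection number $D_1\cdot D_2$ on $X$. The final step is the purely combinatorial computation
$$
D_1\cdot D_2=\mathrm{Ar}(\Delta(f_1)+\Delta(f_2))-\mathrm{Ar}(\Delta(f_1))-\mathrm{Ar}(\Delta(f_2))=\mathrm{MA}(\Delta(f_1),\Delta(f_2)),
$$
which is the toric Bézout identity: for very ample line bundles $\mathcal{L}_{P_i}$ associated to lattice polytopes $P_i$, one has $\mathcal{L}_{P_1}\cdot \mathcal{L}_{P_2}=\mathrm{MA}(P_1,P_2)$. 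This can be derived from the well-known relation $\mathcal{L}_P^2=2\,\mathrm{Ar}(P)$ applied to $\mathcal{L}_{P_1+P_2}=\mathcal{L}_{P_1}\otimes \mathcal{L}_{P_2}$ and expanding bilinearly.

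The main obstacle is the boundary-vanishing lemma. Translating the algebraic statement \emph{``the pair $(f_{1L},f_{2L})$ is non-degenerate''} into the geometric statement \emph{``$D_1$ and $D_2$ do not meet on $\overline{O_L}$''} requires tracking how each edge of $P$ picks out a one-parameter subgroup direction, recognizing that along this direction the leading term of $f_i$ is exactly $f_{iL}$, and checking that the toric coordinates on $\overline{O_L}$ realize the quasi-homogeneous factorization as the set of roots. If instead one prefers to avoid toric geometry, the same conclusion can be reached by a polyhedral-homotopy argument: deform $(f_1,f_2)$ within pairs of the same Newton polytopes in general position to a binomial system whose roots are counted by hand, and use the boundary condition to show that no root escapes to $\{u_1=0\}\cup\{u_2=0\}\cup\{\infty\}$ during the deformation, so the root count is constant.
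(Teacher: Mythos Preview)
The paper does not give its own proof of this statement: Theorem~\ref{teo:KKB} is quoted with the attribution ``(See \cite{Ber, Kus})'' and is used as a black box in the proof of Proposition~\ref{prop:area0}. There is therefore no in-paper argument to compare your sketch against.

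Your outline is the standard toric-compactification proof of the Bernstein--Kushnirenko theorem and the strategy is sound. Two technical points deserve care if you flesh it out. First, the toric surface on which both $f_1$ and $f_2$ extend to sections of line bundles is the one whose fan refines the normal fans of \emph{both} $\Delta(f_1)$ and $\Delta(f_2)$ (for instance the normal fan of the Minkowski sum $\Delta(f_1)+\Delta(f_2)$), not the normal fan of the convex hull $\Delta(f_1,f_2)$; you must then check that the edge condition of Definition~\ref{def:posiciongeneral}, which is phrased in terms of $\Delta(f_1,f_2)$, still controls the boundary behaviour on that possibly finer variety. (This works, essentially because Remark~\ref{rem:una de las restricciones es cero} forces each $\Delta(f_i)$ to meet every edge of $\Delta(f_1,f_2)$, so the two normal fans are already compatible.) Second, non-degeneracy along edges rules out intersections on the toric boundary but does not by itself force $D_1$ and $D_2$ to meet transversely inside the torus; what you actually get is that the global intersection number $D_1\cdot D_2$ equals the sum of \emph{local intersection multiplicities} at the torus points. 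The theorem should accordingly be read as a count with multiplicity, which is harmless for the only use made of it in the paper (where the count is zero).
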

\section{Weak Toric Type Foliated Surfaces} \label{sec:Weak Toric Type Foliated Surfaces}
We give an algebraic characterization in terms of  ``weighted initial forms'' of the weak toric type complex hyperbolic foliated surfaces. More precisely, we introduce the concept of Newton non-degenerate foliated surface, following the classical ideas for curves and functions (see \cite{Oka}). We proof that they are exactly the weak toric type foliated surfaces in the CH-context.
\subsection{Newton non-degenerate foliated surfaces}
The \emph{Newton polygon $N(f;x_1,x_2)$} of a formal power series $f=\sum f_{ij}x_1^ix_2^j \in \mathbb{C}[[x_1,x_2]]$ is defined by
$$
N(f;x_1,x_2)=\text{convex hull}(\text{Supp}(f)+\mathbb{R}_{\geq 0}^2),
$$
where $\text{Supp}(f)=\{(i,j)\in \mathbb{Z}^2; \; f_{ij}\neq 0\}$. The topological boundary of $N(f;x_1,x_2)$ is a union of two non-compact sides and finitely many compact sides (consisting of more than one point) with negative rational slopes. The endpoints of the sides are called vertices.

Let us consider an ambient surface $\mathcal{M}=(M,E)$ and a point $p\in |M|$ with $e_p(E)=2$. Take a logarithmic one-form $\eta \in \Omega^1_{M,p}(\log E)$, that we write in adapted local coordinates $(x_1,x_2)$ as
$$
\eta=a_1dx_1/x_1+a_2dx_2/x_2; \quad a_1,a_2\in \mathcal{O}_{M,p} = \mathbb{C}\{x_1,x_2\} \subset \mathbb{C}[[x_1,x_2]].
$$
The \emph{Newton polygon} $N_p(\eta; x_1,x_2)$ is the convex hull of $N(a_1;x_1,x_2)\cup N(a_2;x_1,x_2)$.
\begin{rem}
	In order to get uniqueness in the definition of the Newton polygon of a foliated surface, we consider total orderings $\prec$ in the set of irreducible components of the divisor.
\end{rem}
Now, we consider a foliated surface $(\mathcal{M},\mathcal{F})$, a total ordering $\prec$ in the set of irreducible components of $E$ and a point $p\in |M|$ with $e_p(E)=2$.
\begin{lema} \label{lema:orden}
	Take local coordinates $(x_1,x_2)$, $(x_1',x_2')$ adapted to $E$, such that $(x_2=0) \prec (x_1=0)$ and $(x'_2=0) \prec (x'_1=0)$. We have that
	$$
	N= N_p(\eta; x_1,x_2)=N_p(\eta'; x'_1,x'_2),
	$$
	where $\eta=a_1dx_1/x_1+a_2dx_2/x_2$, $\eta'=a'_1dx'_1/x'_1+a'_2dx'_2/x'_2$ are local generators of $\mathcal{F}$ adapted to $E$. Moreover, given a compact side $L$ of $N$, it holds that $(a_{1L},a_{2L})$ is non-degenerate if and only if $(a'_{1L},a'_{2L})$ is non-degenerate.
\end{lema}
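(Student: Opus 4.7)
Since both coordinate systems $(x_1, x_2)$ and $(x_1', x_2')$ are adapted to $E = (x_1 x_2 = 0) = (x_1' x_2' = 0)$ and both order its components the same way, we have $(x_i = 0) = (x_i' = 0)$ as germs for $i = 1, 2$. Hence the change of variables has the special form $x_i = u_i(x_1', x_2')\, x_i'$, with $u_1, u_2 \in \mathcal{O}_{M, p}^*$ units. My plan is to transport this shape through the logarithmic derivatives, compare the two local generators of $\mathcal{L}_\mathcal{F}$ via a unit factor, and read off invariance of both the Newton polygon and the non-degeneracy condition.

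A direct computation yields
$$
\frac{dx_i}{x_i} = \frac{dx_i'}{x_i'} + \frac{du_i}{u_i} = B_{i1}\,\frac{dx_1'}{x_1'} + B_{i2}\,\frac{dx_2'}{x_2'},
$$
where the matrix $B$ satisfies $B_{ii}(0, 0) = 1$, while $B_{12}$ is divisible by $x_2'$ and $B_{21}$ by $x_1'$. Since $\eta$ and $\eta'$ both generate the invertible sheaf $\mathcal{L}_\mathcal{F}$, they differ by a unit $v \in \mathcal{O}_{M, p}^*$, giving
$$
v\, a_1' = \tilde{a}_1 B_{11} + \tilde{a}_2 B_{21}, \qquad v\, a_2' = \tilde{a}_1 B_{12} + \tilde{a}_2 B_{22},
$$
with $\tilde{a}_i(x_1', x_2') := a_i(u_1 x_1', u_2 x_2')$.

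For the equality $N := N_p(\eta; x_1, x_2) = N_p(\eta'; x_1', x_2') =: N'$, I would first show $N(\tilde a_i; x_1', x_2') = N(a_i; x_1, x_2)$: the inclusion $\subseteq$ is immediate from the expansion $\tilde a_i = \sum a_{i, kl}\, u_1^k u_2^l\, (x_1')^k (x_2')^l$, and at any vertex $(k_0, l_0)$ of $N(a_i)$ no other support point can contribute to the coefficient of $(x_1')^{k_0} (x_2')^{l_0}$, leaving $a_{i, k_0 l_0}\, u_1(0)^{k_0} u_2(0)^{l_0} \neq 0$. Combined with the matrix identities — and using that multiplication by any $B_{ij}$ leaves the Newton polygon inside $\mathbb{R}^2_{\geq 0}$ and that $v$ is a unit — this gives $N' \subseteq N$; the inverse change of variables has the same shape, so $N \subseteq N'$ by symmetry.

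For non-degeneracy on a compact side $L$ with weight vector $(p, q)$, $p, q > 0$, I would take $(p, q)$-initial forms of both matrix equations. Since $\nu_{(p, q)}(B_{12}) \geq q > 0$ and $\nu_{(p, q)}(B_{21}) \geq p > 0$, while $B_{ii}$ is a unit, the off-diagonal contributions sit at strictly higher weight than the diagonal ones and drop out of the leading part, yielding
$$
v(0, 0)\, a_{iL}'(x_1', x_2') = a_{iL}\bigl(u_1(0)\, x_1',\, u_2(0)\, x_2'\bigr), \qquad i = 1, 2,
$$
with both sides vanishing simultaneously when $L$ does not touch $N(a_i)$. Thus $(a_{1L}', a_{2L}')$ is obtained from $(a_{1L}, a_{2L})$ by a diagonal monomial substitution together with a global scalar, which by Remark \ref{rem:cambiomonomial} preserves non-degeneracy; Remark \ref{rem:una de las restricciones es cero} handles the degenerate edge case where one member of the pair vanishes. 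The main technical obstacle will be this initial-form bookkeeping, particularly verifying that the off-diagonal matrix entries really drop out at leading weight — which relies crucially on both coordinates of the weight vector of a compact side being strictly positive.
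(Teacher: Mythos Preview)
Your proof is correct and follows the same route the paper sketches in a single line (``just note that there are units $u,u_1,u_2$ such that $\eta'=u\eta$ and $x_i'=u_ix_i$''); you have simply filled in the details the paper leaves to the reader. One small quibble: the final step, where you pass from $(a_{1L},a_{2L})$ to $(a_{1L}',a_{2L}')$ by the substitution $x_i\mapsto u_i(0)x_i'$ and a global scalar, is a constant rescaling of the variables rather than a monomial substitution $u\mapsto(u^a,u^b)$, so Remark~\ref{rem:cambiomonomial} does not literally apply --- but the invariance of non-degeneracy under such rescalings is immediate from the factorization in Definition~\ref{def:nodegenerado}, so this is only a mis-citation, not a gap.
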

\begin{proof}
	Just note that there are units $u,u_1,u_2\in \mathcal{O}_{M,p}$, such that $\eta'=u\eta$ and $x_i'=u_ix_i$, for $i=1,2$.	
\end{proof}
In view of Lemma \ref{lema:orden} we define the \emph{Newton polygon $N^{\prec}_p(\mathcal{M},\mathcal{F})$} by
$$
N^{\prec}_p(\mathcal{M},\mathcal{F})=N_p(\eta; x_1,x_2).
$$
\begin{rem} \label{rem:presimpleunsolovertice}
	The point $p$ is presimple if and only if $N^{\prec}_p(\mathcal{M},\mathcal{F})=\mathbb{R}_{\geq 0}^2$.
\end{rem}
The foliated surface $(\mathcal{M},\mathcal{F})$ is \emph{non-degenerate at $p$ with respect to the order $\prec$ and a compact side $L$ of $N^{\prec}_p(\mathcal{M},\mathcal{F})$} if the pair $(a_{1L},a_{2L})$ is non-degenerate. For short, we say that $(\mathcal{F},L)$ is \emph{non-degenerate}.

\begin{rem} \label{rem:relacionordenes}
	Consider two different orderings $\prec$, $\prec'$ in the set of irreducible components of $E$. Denote by $E_1$ and $E_2$ the irreducible components of $E$ through $p$, such that $E_2 \prec E_1$. We have two possibilities at $p$ depending on the order $\prec'$ between $E_1$ and $E_2$:
	\begin{itemize}
		\item $E_2 \prec' E_1$. We have $N^{\prec'}_p(\mathcal{M},\mathcal{F})=N^{\prec}_p(\mathcal{M},\mathcal{F})$.
		\item $E_1 \prec' E_2$. We have $N^{\prec'}_p(\mathcal{M},\mathcal{F})=\sigma(N^{\prec}_p(\mathcal{M},\mathcal{F}))$, where $\sigma$ is the symmetry $(u,v)\mapsto (v,u)$. Moreover, given a compact side $L$ of $N^{\prec}_p(\mathcal{M},\mathcal{F})$, we have that $(\mathcal{F},L)$ is non-degenerate, with respect to $\prec$, if and only if $(\mathcal{F},\sigma(L))$ is non-degenerate, with respect to $\prec'$. \qedhere
	\end{itemize}
\end{rem}
\begin{defin}
	A foliated surface $(\mathcal{M},\mathcal{F})$ is \emph{Newton non-degenerate at $p\in |M|$} if the point $p$ is presimple, or $e_p(E)= 2$ and $(\mathcal{F},L)$ is non-degenerate for each compact side $L$ of $N^{\prec}_p(\mathcal{M},\mathcal{F})$ (this definition does not depend on the chosen ordering $\prec$, in view of Remark \ref{rem:relacionordenes}). We say that $(\mathcal{M},\mathcal{F})$ is \emph{Newton non-degenerate} if the property holds at each point.
\end{defin}

\subsection{Non-degenerate foliations and combinatorial blowing-ups} \label{subsec:poligonoexplosiones}
We present here several results about the stability of being Newton non-degenerate under combinatorial blowing-ups and blowing-downs.

Let us consider a CH-foliated surface $(\mathcal{M},\mathcal{F})$, a point $p\in |M|$ with $e_p(E)=2$ and a total ordering $\prec$ in the set of irreducible components of $E$. Denote by $E_1$, $E_2$ the irreducible components of $E$ through $p$, such that $E_2 \prec E_1$. Let $(x_1,x_2)$ be a local coordinate system with $E_i=(x_i=0)$, for $i=1,2$. Consider a local generator $\eta=a_1dx_1/x_1+a_2dx_2/x_2$ of $\mathcal{F}$ adapted to $E$ at $p$. Denote by $d\geq 0$ the adapted order, that is $d=\nu_p(\mathcal{F},E)=\nu_p(a_1,a_2)$. We write
$$
a_i=A_i+\tilde{a}_i\in \mathbb{C}\{x_1,x_2\}, \; \nu_p(\tilde{a}_i) > d, \quad i=1,2,
$$
where the $A_i$ are homogeneous polynomials of degree $d$. Note that $(A_1,A_2)\ne (0,0)$.

Let us perform the blowing-up $\pi: (\mathcal{M}',\mathcal{F}') \rightarrow (\mathcal{M}, \mathcal{F})$ centered at $p$. Note that $\pi$ is a combinatorial blowing-up, since $e_p(E)=2$. We consider the ordering $\prec'$ in the set of irreducible components of $E'$ obtained by adding the exceptional divisor $D=\pi^{-1}(p)$ with the property $E'_2 \prec' D \prec' E'_1$, where $E'_i$ denote the strict transforms of $E_i$, for $i=1,2$. Denote
$$
\{q_0\}=D \cap E'_2, \; \{q_{\infty}\}=D \cap E'_1 \;  \text{ and } \mathcal{T}=D \setminus \{q_0,q_{\infty}\}.
$$
\begin{rem} \label{rem:coordenadas}
	We can consider affine coordinates $(x_1',x_2')$ on the chart of the blowing-up with origin in $q_0$, given by $x_1=x'_1,x_2=x'_1x'_2$. In this chart $E'_2=(x_2'=0)$ and $D=(x'_1=0)$. Analogously, we can consider affine coordinates $(x_1'',x_2'')$ on the chart of the blowing-up with origin in $q_{\infty}$, given by $x_1=x''_1x_2'',x_2=x''_2$. Here $D=(x''_2=0)$ and $E'_1=(x_1''=0)$. These choices of coordinates are compatible with the ordering  $\prec'$ in the set of irreducible components of $E'$.
\end{rem}
\begin{rem}\label{rem:ladopendiente-1}
	The following properties are well known (see \cite{Can-C-D}):
	\begin{enumerate}
		\item Assume that the blowing-up $\pi$ is dicritical. That is, the exceptional divisor $D$ is dicritical, what happens if and only if $A_1+A_2=0$. In this case $\mathcal{T} \cap \text{Sing}(\mathcal{F'},E')$ is given by $A_1=0$ and all the points in this set are  non-presimple points.
		\item Suppose that the blowing-up $\pi$ is non-dicritical. In this case	$\text{Sing}(\mathcal{F}',E') \cap D=\text{Sing}(\mathcal{F}') \cap D$ and $\mathcal{T} \cap \text{Sing}(\mathcal{F'})$ is given by the tangent cone $A_1+A_2=0$. \qedhere
	\end{enumerate}
\end{rem}
\begin{lema}\label{lema:ladopendiente-1}
	If holds that $\mathcal{T} \cap \text{Sing}(\mathcal{F'},E')\neq \emptyset$ if and only if $N^{\prec}_p(\mathcal{M},\mathcal{F})$ has a compact side of slope $-1$.
\end{lema}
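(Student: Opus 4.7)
The plan is to combine Remark \ref{rem:ladopendiente-1} with a direct analysis of the southwest boundary of $N^{\prec}_p(\mathcal{M},\mathcal{F})$. First I show that every compact side of slope $-1$ of the polygon lies on the line $i+j=d$, with $d=\nu_p(\mathcal{F},E)$. Along the southwest boundary the slopes of consecutive compact sides increase monotonically from near $-\infty$ to near $0$; the value of $i+j$ strictly decreases on sides of slope less than $-1$, is constant on a slope-$(-1)$ side, and strictly increases on sides of slope greater than $-1$. Since $d$ is the minimum value of $i+j$ on the polygon, a slope-$(-1)$ side, if present, is at level $d$, and exists exactly when $\mathrm{Supp}(A_1) \cup \mathrm{Supp}(A_2)$ has at least two distinct points.

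By Remark \ref{rem:ladopendiente-1}, $\mathcal{T} \cap \mathrm{Sing}(\mathcal{F}',E')$ is non-empty if and only if the degree-$d$ homogeneous form $A_1+A_2$ (non-dicritical case) or $A_1$ (dicritical case) has a zero in $\mathbb{P}^1$ different from $[1:0]$ and $[0:1]$, i.e., is not of the form $c\,x_1^i x_2^j$. The dicritical case follows at once: $A_2=-A_1$ forces $\mathrm{Supp}(A_1) \cup \mathrm{Supp}(A_2)=\mathrm{Supp}(A_1)$, so ``$|\mathrm{Supp}(A_1)|\geq 2$'' is literally the same as ``$A_1$ is not a monomial.'' In the non-dicritical case one implication is automatic: a non-monomial $A_1+A_2$ has support of size at least two, which is a subset of $\mathrm{Supp}(A_1) \cup \mathrm{Supp}(A_2)$.

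The converse in the non-dicritical case is the main obstacle, and it is where the complex hyperbolic hypothesis becomes decisive: a priori, $A_1+A_2$ could collapse to a single monomial $c\,x_1^a x_2^b$ because of cancellation even when $\mathrm{Supp}(A_1) \cup \mathrm{Supp}(A_2)$ has two level-$d$ points. I would argue by contradiction: if a cancelling pair $(A_1)_{(i_0,d-i_0)} = -(A_2)_{(i_0,d-i_0)} \neq 0$ existed, I read the strict transform of $\omega$ in the chart around the corner $q_\infty=[0:1]$ (when $a<d$) or $q_0=[1:0]$ (when $b<d$) and combine this with the no-common-factor condition on $(a_1,a_2)$ to show that a tangent vector field at that corner has a vanishing eigenvalue; then a finite sequence of further blowups, all in the same cancellation configuration as captured by the transformed Newton polygon, eventually produces a saddle-node, contradicting CH. The technical heart of the proof is making this emergence of a saddle-node from cancellation precise through a careful bookkeeping of the Newton polygon under the blowup.
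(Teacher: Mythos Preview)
Your decomposition into the dicritical and non-dicritical cases, and the easy directions you handle, match what the paper intends; the paper's own proof is the single sentence ``It follows by Remark~\ref{rem:ladopendiente-1}, looking separately the dicritical and non-dicritical cases.'' So your write-up is already more detailed than the paper's.

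You are right that the non-dicritical implication ``$N_p^{\prec}(\mathcal{M},\mathcal{F})$ has a slope~$-1$ side $\Rightarrow \mathcal{T}\cap\mathrm{Sing}(\mathcal{F}',E')\neq\emptyset$'' is not immediate from Remark~\ref{rem:ladopendiente-1}: one can have $|\mathrm{Supp}(A_1)\cup\mathrm{Supp}(A_2)|\geq 2$ while $A_1+A_2$ collapses to a single monomial by cancellation (e.g.\ $A_1=x_1+x_2$, $A_2=-x_2$, or $A_1=x_1^2+x_2^2$, $A_2=-x_2^2$). The paper's one-line proof glosses over this, and you correctly locate the CH hypothesis as the missing ingredient; indeed in both toy examples a saddle-node appears at one of the corners $q_0,q_\infty$ after the very first blow-up.

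However, your proposed resolution is itself only a sketch: the sentence ``a finite sequence of further blowups, all in the same cancellation configuration \dots, eventually produces a saddle-node'' is a plan, not an argument, and the phrase ``the technical heart of the proof is making this emergence of a saddle-node from cancellation precise'' concedes as much. To turn this into a proof you should make the following explicit. Write $A_1+A_2=c\,x_1^ax_2^b$ with $a+b=d$, and let $(s,d-s)$ be the extreme vertex of the slope~$-1$ side with $s>a$ (the case $s<a$ is symmetric at the other corner). In the chart at $q_\infty$ one has $a_1''|_{x_2''=0}=A_1(x_1'',1)$ and $a_2''|_{x_2''=0}=(A_1+A_2)(x_1'',1)=c\,(x_1'')^a$; exploit that the cancelling coefficients force $A_1$ and $A_2$ to share the exponent $(s,d-s)$ with opposite signs, and track the resulting linear part of a tangent vector field at $q_\infty$ (or, if $q_\infty$ is still non-presimple, at the next corner along the transformed polygon). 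Without this computation written out, the hard direction remains unproved.
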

\begin{proof}
	It follows by Remark \ref{rem:ladopendiente-1}, looking separately the dicritical and non-dicritical cases.
\end{proof}
\begin{lema} \label{lema:lugarnopresimple}
	If $\pi$ is non-dicritical, then the non-presimple points $q \in \mathcal{T}$ are given by $A_1+A_2=A_2=0$.
\end{lema}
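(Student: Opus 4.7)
The plan is to compute $\pi^{*}\eta$ explicitly in the chart near $q_0$ from Remark \ref{rem:coordenadas}, which covers $\mathcal{T}\setminus\{q_\infty\}$; the symmetric chart near $q_\infty$ produces the same condition by the $x_1\leftrightarrow x_2$ symmetry. Using $x_1=x'_1$, $x_2=x'_1x'_2$, together with $dx_1/x_1=dx'_1/x'_1$ and $dx_2/x_2=dx'_1/x'_1+dx'_2/x'_2$, and the decomposition $a_i=A_i+\tilde{a}_i$ with $A_i$ homogeneous of degree $d=\nu_p(\mathcal{F},E)$ and $\nu_p(\tilde{a}_i)>d$, I get $a_i(x'_1,x'_1x'_2)=x'^{d}_1[A_i(1,x'_2)+x'_1\beta_i]$ for suitable $\beta_i\in\mathcal{O}_{M',q_0}$ (since each monomial of $\tilde a_i$ produces an $x'_1$-power of at least $d+1$), and thus
\[
\pi^{*}\eta=x'^{d}_1\Bigl\{\bigl[(A_1+A_2)(1,x'_2)+x'_1(\beta_1+\beta_2)\bigr]\frac{dx'_1}{x'_1}+\bigl[A_2(1,x'_2)+x'_1\beta_2\bigr]\frac{dx'_2}{x'_2}\Bigr\}.
\]

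Fix now a point $q=(0,\alpha)\in\mathcal{T}$ with $\alpha\in\mathbb{C}^{*}$. Locally at $q$ only $D=(x'_1=0)$ belongs to $E'$, and $1/x'_2$ is a unit there; after extracting the common factor $x'^{d}_1$ and converting $dx'_2/x'_2=(1/x'_2)\,dx'_2$, a local logarithmic generator of $\mathcal{F}'$ adapted to $E'$ at $q$ is, up to a unit,
\[
\eta_q=\bigl[(A_1+A_2)(1,x'_2)+x'_1(\beta_1+\beta_2)\bigr]\frac{dx'_1}{x'_1}+\bigl[A_2(1,x'_2)+x'_1\beta_2\bigr]\,dx'_2.
\]
Coprimality of $a_1,a_2$ in the UFD $\mathcal{O}_{M,p}=\mathbb{C}\{x_1,x_2\}$ forces any common factor of the pull-backs $a_i(x'_1,x'_1x'_2)$ in $\mathbb{C}\{x'_1,x'_2\}$ to be a power of $x'_1$; after extracting $x'^{d}_1$ the two coefficients of $\eta_q$ are therefore coprime at $q$, so $\eta_q$ is genuinely an adapted generator.

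Finally, I invoke Lemma \ref{lema:presimplesCH}: since $(\mathcal{M}',\mathcal{F}')$ is still CH, the point $q$ fails to be presimple if and only if $\nu_q(\mathcal{F}',E')\geq 1$, that is, if and only if both coefficients of $\eta_q$ vanish at $q$. Evaluating at $q=(0,\alpha)$ gives exactly $(A_1+A_2)(1,\alpha)$ and $A_2(1,\alpha)$; under the identification of $q\in D$ with the projective direction $[1{:}\alpha]$, the simultaneous vanishing of these two homogeneous polynomials is precisely the condition $A_1+A_2=A_2=0$ of the statement.

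The principal technical point to guard against is the coprimality claim after extracting $x'^{d}_1$: without it, a common factor of the coefficients could force the genuine adapted multiplicity to be smaller than the naive minimum, and the characterization could fail. The argument above handles this via the UFD property of $\mathcal{O}_{M,p}$ combined with the fact that the pull-back under a blowing-up only introduces the exceptional coordinate as a possible new common factor, and that this factor has already been fully removed.
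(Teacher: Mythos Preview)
Your proof is correct and follows the same computation as the paper: pull back $\eta$ in the chart of Remark~\ref{rem:coordenadas}, divide by $x'^{\,d}_1$, and invoke Lemma~\ref{lema:presimplesCH} to read off the non-presimple locus on $\mathcal{T}$ as $(A_1+A_2)(1,\lambda)=A_2(1,\lambda)=0$. Your explicit verification that the resulting coefficients are coprime (so that $\eta_q$ really is the adapted generator) is a point the paper leaves implicit; note only that this coprimality should be argued in $\mathcal{O}_{M',q}$ rather than in $\mathbb{C}\{x'_1,x'_2\}$, but your geometric justification (the blowing-up can introduce only the exceptional coordinate as a new common factor, and non-dicriticality forbids a further factor of $x'_1$) applies verbatim there.
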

\begin{proof}
	Consider coordinates $(x_1',x_2')$ like in Remark \ref{rem:coordenadas}. A generator $\eta'$ of $\mathcal{F}'$ adapted to $E'$ is given by
	$$
	\eta'=\left((A_1+A_2)(1,x_2')+x_1'\frac{(\tilde{a}_1+\tilde{a}_2)(x_1',x_1'x_2')}{(x_1')^{d+1}}\right)\frac{dx_1'}{x_1'}+
	\left(A_2(1,x_2')+x_1'\frac{\tilde{a}_2(x_1',x_1'x_2')}{(x_1')^{d+1}}\right)\frac{dx_2'}{x_2'},
	$$
	The  non-presimple points $q \in \mathcal{T}$ are given by $x'_1=0$, $x'_2=\lambda\in \mathbb{C}^*$, with  $(A_1+A_2)(1,\lambda)=A_2(1,\lambda)=0$, in view of Lemma \ref{lema:presimplesCH}.
\end{proof}
\begin{cor} \label{cor:ladopendiente-1}
	The pair $(A_1,A_2)$ is non-degenerate if and only if each point $q\in\mathcal{T}$ is presimple.
\end{cor}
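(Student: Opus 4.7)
The plan is to show that both the non-degeneracy of $(A_1, A_2)$ and the presimple-ness of every $q \in \mathcal{T}$ are equivalent to the same combinatorial condition, namely that $A_1$ and $A_2$ share no common factor of the form $x_2 - \lambda x_1$ with $\lambda \in \mathbb{C}^*$. Since $A_1, A_2$ are homogeneous of degree $d$, they are quasi-homogeneous with weight vector $(1,1)$ and admit the decomposition
$$
A_i = c_i x_1^{\tau_{i1}} x_2^{\tau_{i2}} \prod_{j=1}^{N_i}(x_2 - \alpha_{ij} x_1), \qquad i = 1, 2;
$$
by Definition \ref{def:nodegenerado}, non-degeneracy of $(A_1, A_2)$ is exactly the condition $\{\alpha_{1j}\} \cap \{\alpha_{2k}\} = \emptyset$, which is the stated combinatorial condition.

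For the equivalence with ``every $q \in \mathcal{T}$ presimple'', I work in the chart around $q_0$ from Remark \ref{rem:coordenadas}, in which $\mathcal{T}$ is identified with $\{x_1' = 0,\; x_2' \in \mathbb{C}^*\}$ (the chart around $q_\infty$ gives a symmetric picture and adds no new points of $\mathcal{T}$). On this torus piece, the monomial prefactors $x_1^{\tau_{i1}} x_2^{\tau_{i2}}$ take non-zero values, so the zero locus of $A_i(1, x_2')$ in $\mathbb{C}^*$ is exactly $\{\alpha_{ij}\}$. I then split the analysis into the dicritical and non-dicritical cases. In the dicritical case $A_1 + A_2 = 0$, Remark \ref{rem:ladopendiente-1}(1) says that the non-presimple points of $\mathcal{T}$ are the zeros of $A_1$ on $\mathcal{T}$, i.e.\ the points $x_2' \in \{\alpha_{1j}\}$; since $A_2 = -A_1$ we have $\{\alpha_{1j}\} = \{\alpha_{2k}\}$, so absence of such zeros is equivalent to $\{\alpha_{1j}\} \cap \{\alpha_{2k}\} = \emptyset$. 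In the non-dicritical case, Lemma \ref{lema:lugarnopresimple} characterises the non-presimple points in $\mathcal{T}$ as the common zeros of $A_1$ and $A_2$ on the torus, which by the above identification are exactly the values in $\{\alpha_{1j}\} \cap \{\alpha_{2k}\}$; again, emptiness of this intersection coincides with the combinatorial condition.

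The only step requiring care is the translation between the scheme-theoretic description of the non-presimple locus supplied by Remark \ref{rem:ladopendiente-1} and Lemma \ref{lema:lugarnopresimple} and the set-theoretic intersection $\{\alpha_{1j}\} \cap \{\alpha_{2k}\}$ appearing in Definition \ref{def:nodegenerado}; once this identification is in place, both halves of the equivalence follow from the same case analysis in parallel and no further computation is needed.
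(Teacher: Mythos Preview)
Your proof is correct and uses the same two inputs as the paper (Remark~\ref{rem:ladopendiente-1} for the dicritical case and Lemma~\ref{lema:lugarnopresimple} for the non-dicritical one), but the organization differs in a small and useful way. The paper splits according to whether $N_p^{\prec}(\mathcal{M},\mathcal{F})$ has a compact side of slope $-1$, and in the slope~$-1$ case it passes through the auxiliary pair $(A_1+A_2,A_2)$, invoking Remark~\ref{rem:suma} to transfer non-degeneracy back and forth between $(A_1+A_2,A_2)$ and $(A_1,A_2)$. You instead split according to whether the blowing-up is dicritical and work directly with the definition of non-degeneracy as $\{\alpha_{1j}\}\cap\{\alpha_{2k}\}=\emptyset$; the trivial rewriting $A_1+A_2=A_2=0\Longleftrightarrow A_1=A_2=0$ replaces the appeal to Remark~\ref{rem:suma}. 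Your route is a little more direct and makes the geometric content (non-presimple points on $\mathcal{T}$ $=$ common torus zeros of $A_1,A_2$) visible in one line; the paper's route has the advantage of being phrased in terms of the Newton polygon, which ties in with the surrounding discussion of Lemma~\ref{lema:ladopendiente-1}. One small point: your factorization $A_i=c_i x_1^{\tau_{i1}}x_2^{\tau_{i2}}\prod(x_2-\alpha_{ij}x_1)$ tacitly assumes $A_i\neq 0$; the paper's proof has the same tacit assumption when it asserts that, in the absence of a slope~$-1$ side, ``$A_1$ and $A_2$ are the same monomial up to constant'', so this is not a divergence between the two arguments.
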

\begin{proof}
	Suppose that $N^{\prec}_p(\mathcal{M},\mathcal{F})$ does not have a compact side with slope $-1$. This is equivalent to say that $A_1$ and $A_2$ are the same monomial up to constant and hence $(A_1,A_2)$ is a non-degenerate pair. On the other hand, in view of Lemma \ref{lema:ladopendiente-1}, all the points $q\in\mathcal{T}$ are presimple.
	
	Suppose now that $N^{\prec}_p(\mathcal{M},\mathcal{F})$ has a compact side with slope $-1$. Assume first that all the points $q\in\mathcal{T}$ are presimple. The blowing-up $\pi$ is non-dicritical by Remark \ref{rem:ladopendiente-1}, then Lemma \ref{lema:lugarnopresimple} implies that $(A_1+A_2,A_2)$ is non-degenerate and by Remark \ref{rem:suma}, this is equivalent to say that $(A_1,A_2)$ is non-degenerate. Conversely, if $(A_1,A_2)$ is non-degenerate, then $A_1 \ne -A_2$ and the blowing-up is non-dicritical. Moreover $(A_1+A_2,A_2)$ is non-degenerate, as a consequence, all the points $q\in\mathcal{T}$ are presimple.
\end{proof}
We introduce the notation $N=N^{\prec}_p(\mathcal{M},\mathcal{F}), \; N_0=N^{\prec'}_{q_0}(\mathcal{M}',\mathcal{F}'), \;N_{\infty}=N^{\prec'}_{q_{\infty}}(\mathcal{M}',\mathcal{F}').$

Let $\mathcal{S}_{p}^+$ be the set of compact sides of $N$ with slope greater than $-1$ and let $\mathcal{S}_{q_0}$ be the set of compact sides of $N_0$. There is a bijection between $\mathcal{S}_{p}^+$ and $\mathcal{S}_{q_0}$ given by $L \in \mathcal{S}_{p}^+ \mapsto L' \in \mathcal{S}_{q_0}$,
where $L'$ is the side of slope $m'=m/(1+m) <0$, being $m\in \mathbb{Q}_{>-1}$ the slope of $L$. In the same way, denote by $\mathcal{S}_{p}^-$ the set of compact sides of $N$ with slope lower than $-1$ and by $\mathcal{S}_{q_{\infty}}$ the set of compact sides of $N_{\infty}$. There is a bijection $L \in \mathcal{S}_{p}^- \mapsto L' \in \mathcal{S}_{q_{\infty}}$ where $L'$ has slope $m'=m+1 < 0$, being $m\in \mathbb{Q}_{<-1}$ the slope of $L$. Given a compact side $L$ of $N$ with slope different than $-1$, that is $L\in \mathcal{S}_{p}^+ \cup \mathcal{S}_{p}^-$, we say that $L'$ is the \emph{transform of $L$}.

\begin{lema} \label{lema:NNDq0}
	Consider a compact side $L\in \mathcal{S}_{p}^+ \cup \mathcal{S}_{p}^-$ and let $L'$ be transform of $L$. It holds that $(\mathcal{F},L)$ is non-degenerate if and only if $(\mathcal{F}',L')$ is non-degenerate.
\end{lema}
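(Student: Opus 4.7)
The plan is to write down the adapted generator of $\mathcal{F}'$ at $q_0$ coming from $\eta$, identify its $L'$-restrictions explicitly in terms of $a_{1L}$ and $a_{2L}$, and then invoke Remarks \ref{rem:suma} and \ref{rem:cambiomonomial} to transfer the non-degeneracy across the blowing-up. I treat the case $L \in \mathcal{S}_{p}^{+}$ in detail; the case $L \in \mathcal{S}_{p}^{-}$ is symmetric, working in the chart around $q_{\infty}$ with the substitution $x_{1} = x_{1}''x_{2}''$, $x_{2} = x_{2}''$, under which $a''_{1} = (x_{2}'')^{-d}a_{1}(x_{1}''x_{2}'',x_{2}'')$ and $a''_{2} = (x_{2}'')^{-d}(a_{1}+a_{2})(x_{1}''x_{2}'',x_{2}'')$, i.e.\ the roles of $a_{1}$ and $a_{1}+a_{2}$ are exchanged.

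Writing the slope of $L$ as $-\beta/\alpha$ with $\gcd(\alpha,\beta)=1$ and $0<\beta<\alpha$, the weight vector of $L$ is $(\beta,\alpha)$ and $L$ sits on the line $\beta i + \alpha j = r$. From the formula in the proof of Lemma \ref{lema:lugarnopresimple}, the coefficients of the adapted generator at $q_{0}$ are
\[
a'_{1} = (x_{1}')^{-d}(a_{1}+a_{2})(x_{1}',x_{1}'x_{2}'), \qquad a'_{2} = (x_{1}')^{-d}a_{2}(x_{1}',x_{1}'x_{2}').
\]
A monomial $x_{1}^{i}x_{2}^{j}$ transforms into $(x_{1}')^{i+j-d}(x_{2}')^{j}$, whose exponent lies on the line $\beta i' + (\alpha-\beta)j' = r-\beta d$ (which is the line supporting $L'$) exactly when $\beta i + \alpha j = r$, i.e.\ exactly when $(i,j)\in L$. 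It follows that
\[
\bigl(a'_{1L'},\,a'_{2L'}\bigr) = \bigl((x_{1}')^{-d}(a_{1L}+a_{2L})(x_{1}',x_{1}'x_{2}'),\,(x_{1}')^{-d}a_{2L}(x_{1}',x_{1}'x_{2}')\bigr).
\]

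To finish, I chain two equivalences. Since $a_{1L}$ and $a_{2L}$ are quasi-homogeneous of the same degree, Remark \ref{rem:suma}, applied in both directions, shows that $(a_{1L},a_{2L})$ is non-degenerate if and only if $(a_{1L}+a_{2L},a_{2L})$ is non-degenerate; the edge case $a_{1L}+a_{2L}=0$ is handled directly by Remark \ref{rem:una de las restricciones es cero}, as both pairs then have a zero component. Next, Remark \ref{rem:cambiomonomial}(1) and (3), applied with $a=(1,0)$, $b=(1,1)$ (so that $a_{1}b_{2}-a_{2}b_{1}=1$) and monomial factor $(x_{1}')^{-d}$, gives that $(a_{1L}+a_{2L},a_{2L})$ is non-degenerate if and only if $(a'_{1L'},a'_{2L'})$ is non-degenerate. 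The delicate point I expect to guard carefully is the claim in the previous paragraph that terms of $a_{1}$ or $a_{2}$ lying strictly above $L$ in $N$ cannot contribute to the $L'$-restriction; this is precisely the strict version of the same index computation, $\beta i + \alpha j > r$ mapping to $\beta i' + (\alpha-\beta)j' > r - \beta d$, and once it is in place the non-degeneracy equivalence propagates mechanically.
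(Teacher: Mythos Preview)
Your argument is correct and follows essentially the same route as the paper's own proof: compute the adapted generator at $q_0$ via the chart $x_1=x_1',\ x_2=x_1'x_2'$, identify $a'_{1L'}=(x_1')^{-d}(a_{1L}+a_{2L})(x_1',x_1'x_2')$ and $a'_{2L'}=(x_1')^{-d}a_{2L}(x_1',x_1'x_2')$, then apply Remark~\ref{rem:cambiomonomial} and Remark~\ref{rem:suma}. Your write-up is in fact slightly more careful than the paper's, since you make explicit the weight computation showing that the supporting line of $L$ maps to that of $L'$ (and that strict inequalities are preserved), and you address the degenerate case $a_{1L}+a_{2L}=0$ separately via Remark~\ref{rem:una de las restricciones es cero}.
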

\begin{proof}
	Suppose that $L\in \mathcal{S}_{p}^+$, the proof is analogous when $L\in \mathcal{S}_{p}^-$. Consider coordinates $(x_1',x_2')$ like in Remark \ref{rem:coordenadas}. A local generator $\eta'$ of $\mathcal{F}'$ adapted to $E'$ at $q_0$ is given by
	$
	\eta'=a_1'dx_1'/x_1'+a_2'dx_2'/x_2',
	$
	where $a_1'=(a_1+a_2)(x_1',x_1'x_2')/(x_1')^d$ and $a'_2=a_2(x_1',x_1'x_2')/(x_1')^d$. We also have that
	$$
	a_{1L'}'=(a_{1L}+a_{2L})(x_1',x_1'x_2')/(x_1')^d, \;a'_{2L'}=a_{2L}(x_1',x_1'x_2')/(x_1')^d.
	$$
	Then $(a_{1L'}',a_{2L'}')$ is non-degenerate if and only if $(a_{1L}+a_{2L},a_{2L})$ is non-degenerate, in view of Remark \ref{rem:cambiomonomial}. This is equivalent to have $(a_{1L},a_{2L})$ non-degenerate, by Remark \ref{rem:suma}.
\end{proof}

\begin{cor} \label{cor:NNDq0} The following properties hold for the foliated surface $(\mathcal{M}',\mathcal{F}')$: 
	\begin{itemize}
		\item It is Newton non-degenerate at $q_0$ if and only if $(\mathcal{F},L)$ is non-degenerate for every $L\in\mathcal{S}_{p}^+$.
		\item It is Newton non-degenerate at $q_{\infty}$ if and only if $(\mathcal{F},L)$ is non-degenerate for every $L\in\mathcal{S}_{p}^-$. \qedhere
	\end{itemize}
\end{cor}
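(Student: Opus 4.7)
The plan is to derive both statements of Corollary \ref{cor:NNDq0} as a direct packaging of Lemma \ref{lema:NNDq0} together with the explicit bijections $\mathcal{S}_{p}^+ \leftrightarrow \mathcal{S}_{q_0}$ and $\mathcal{S}_{p}^- \leftrightarrow \mathcal{S}_{q_{\infty}}$ described just above the lemma. I will carry out the argument at $q_0$ in detail; the situation at $q_\infty$ is identical after replacing $\mathcal{S}_{p}^+$ by $\mathcal{S}_{p}^-$ and $N_0$ by $N_\infty$.

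First I note that $q_0 = D\cap E'_2$ is a corner of $E'$, so $e_{q_0}(E')=2$ and the definition of Newton non-degeneracy at $q_0$ applies through the Newton polygon $N_0 = N^{\prec'}_{q_0}(\mathcal{M}',\mathcal{F}')$. I then split the argument into two cases depending on whether $q_0$ is presimple for $(\mathcal{M}',\mathcal{F}')$.

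Suppose first that $q_0$ is presimple. By definition, $(\mathcal{M}',\mathcal{F}')$ is Newton non-degenerate at $q_0$ automatically. On the other hand, Remark \ref{rem:presimpleunsolovertice} gives $N_0 = \mathbb{R}_{\geq 0}^2$, so $N_0$ has no compact sides and $\mathcal{S}_{q_0}=\emptyset$. The stated bijection between $\mathcal{S}_{p}^+$ and $\mathcal{S}_{q_0}$ therefore forces $\mathcal{S}_{p}^+=\emptyset$, and the condition ``$(\mathcal{F},L)$ is non-degenerate for every $L\in\mathcal{S}_{p}^+$'' holds vacuously. So both sides of the claimed equivalence are true. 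Suppose now that $q_0$ is not presimple. By the definition of Newton non-degenerate and because $e_{q_0}(E')=2$, the foliated surface $(\mathcal{M}',\mathcal{F}')$ is Newton non-degenerate at $q_0$ if and only if $(\mathcal{F}',L')$ is non-degenerate for every compact side $L'$ of $N_0$, i.e.\ for every $L'\in\mathcal{S}_{q_0}$. The bijection identifies every such $L'$ as the transform of a unique $L\in\mathcal{S}_{p}^+$, and by Lemma \ref{lema:NNDq0}, non-degeneracy of $(\mathcal{F}',L')$ is equivalent to non-degeneracy of $(\mathcal{F},L)$. Putting the two equivalences together yields exactly the statement of the first item of the corollary.

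The argument at $q_\infty$ is carried out verbatim, replacing $\mathcal{S}_{p}^+$ by $\mathcal{S}_{p}^-$ and using the bijection $\mathcal{S}_{p}^- \leftrightarrow \mathcal{S}_{q_\infty}$ together with the analogue of Lemma \ref{lema:NNDq0} for sides of slope less than $-1$. There is essentially no substantive obstacle in this proof: everything has been done in the preceding lemmas, and the only point that demands a moment's care is handling the presimple case, where one must observe that the bijection is onto so that the emptiness of $\mathcal{S}_{q_0}$ really does imply the emptiness of $\mathcal{S}_{p}^+$ and hence the vacuous truth of the right-hand side.
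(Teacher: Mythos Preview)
Your argument is correct and matches the paper's approach: the paper's own proof is the single line ``Direct consequence of Lemma \ref{lema:NNDq0}'', and you have simply unpacked that consequence via the bijections $\mathcal{S}_p^{\pm}\leftrightarrow \mathcal{S}_{q_0},\mathcal{S}_{q_\infty}$. Your separate treatment of the presimple case is harmless but not strictly necessary: since $e_{q_0}(E')=2$, Newton non-degeneracy at $q_0$ is already equivalent to ``$(\mathcal{F}',L')$ non-degenerate for every $L'\in\mathcal{S}_{q_0}$'' (the presimple clause being subsumed by Remark \ref{rem:presimpleunsolovertice}), so the bijection plus Lemma \ref{lema:NNDq0} handles all cases uniformly.
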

\begin{proof}
	Direct consequence of Lemma \ref{lema:NNDq0}.
\end{proof}

\begin{prop} \label{prop:estabilidadNND}
	Consider a CH-foliated surface $(\mathcal{M},\mathcal{F})$. Let $\pi:(\mathcal{M}',\mathcal{F'}) \rightarrow (\mathcal{M},\mathcal{F})$ be a combinatorial blowing-up centered at a point $p\in |M|$. We have that $(\mathcal{M},\mathcal{F})$ is Newton non-degenerate at $p$ if and only if $(\mathcal{M}',\mathcal{F'})$ is Newton non-degenerate at each point $q\in \pi^{-1}(p)$.
\end{prop}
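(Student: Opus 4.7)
The plan is to reduce the statement to a point-by-point comparison on the exceptional divisor $D=\pi^{-1}(p)$, and then recognize the three resulting sets of conditions as a partition of the set of compact sides of $N=N^{\prec}_p(\mathcal{M},\mathcal{F})$. Concretely, I decompose $D=\{q_0\}\cup\{q_\infty\}\cup\mathcal{T}$, where $q_0,q_\infty$ are the corner points (with $e(E')=2$) and every $q\in\mathcal{T}$ has $e_q(E')=1$. By the definition of Newton non-degeneracy, the condition at $q_0$ and $q_\infty$ is the full condition over the compact sides of $N_0$ and $N_\infty$, whereas the condition at any $q\in\mathcal{T}$ is simply that $q$ be presimple.

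Next I plug in the translations already proved. Corollary \ref{cor:NNDq0} gives that $(\mathcal{M}',\mathcal{F}')$ is Newton non-degenerate at $q_0$ (resp.\ $q_\infty$) if and only if $(\mathcal{F},L)$ is non-degenerate for every $L\in\mathcal{S}_p^+$ (resp.\ $L\in\mathcal{S}_p^-$). Corollary \ref{cor:ladopendiente-1} gives that every $q\in\mathcal{T}$ is presimple if and only if the pair $(A_1,A_2)$ of degree-$d$ homogeneous parts is non-degenerate. So the right-hand side of the proposition is equivalent to the conjunction of these three conditions.

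The key geometric observation is that the set of compact sides of $N$ is the disjoint union
\[
\mathcal{S}_p^+\;\sqcup\;\mathcal{S}_p^-\;\sqcup\;\{L^*\},
\]
where $L^*$ is the slope-$-1$ side, included only when it exists. When $L^*$ exists, the support of $L^*$ lies on the line $i+j=d$ and contains every lattice point of $\mathrm{Supp}(a_1)\cup\mathrm{Supp}(a_2)$ on this line; hence $(a_{1L^*},a_{2L^*})=(A_1,A_2)$, so non-degeneracy of $(A_1,A_2)$ is exactly non-degeneracy of $(\mathcal{F},L^*)$. When $L^*$ does not exist, $A_1$ and $A_2$ are monomials (up to constant) and $(A_1,A_2)$ is automatically non-degenerate, consistently with the vanishing contribution to the list of compact sides. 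Putting the three pieces together yields the equivalence with $(\mathcal{F},L)$ being non-degenerate on every compact side of $N$, that is, with Newton non-degeneracy of $(\mathcal{M},\mathcal{F})$ at $p$.

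Finally, I must cover the presimple boundary case: if $p$ is presimple, Remark \ref{rem:presimpleunsolovertice} yields $N=\mathbb{R}_{\geq 0}^2$, so $\mathcal{S}_p^+=\mathcal{S}_p^-=\emptyset$ and no slope-$-1$ side exists; the corollaries then guarantee that $q_0,q_\infty$ are Newton non-degenerate vacuously and that every $q\in\mathcal{T}$ is presimple, consistently with the trivial right-hand side. The main obstacle is simply pinning down the identification $(a_{1L^*},a_{2L^*})=(A_1,A_2)$ and keeping bookkeeping of the disjoint decomposition of compact sides by slope; once those are in place, the equivalence follows formally from Corollaries \ref{cor:NNDq0} and \ref{cor:ladopendiente-1}.
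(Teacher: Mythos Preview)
Your proof is correct and follows essentially the same approach as the paper's own proof: both rely on Corollary~\ref{cor:NNDq0} for the corner points $q_0,q_\infty$ and on Corollary~\ref{cor:ladopendiente-1} for the points of $\mathcal{T}$, together with the identification $(a_{1L^*},a_{2L^*})=(A_1,A_2)$ on the slope-$(-1)$ side. The only organizational difference is that the paper splits explicitly into the two cases ``$N$ has a slope-$(-1)$ side'' versus ``it does not'', whereas you package both cases uniformly via the disjoint decomposition $\mathcal{S}_p^+\sqcup\mathcal{S}_p^-\sqcup\{L^*\}$ of the compact sides of $N$.
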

\begin{proof}
	If $N=N^{\prec}_p(\mathcal{M},\mathcal{F})$ has no compact side with slope $-1$, we are done in view of Corollary \ref{cor:NNDq0} and Lemma \ref{lema:ladopendiente-1}. Assume now that $N$ has a compact side $L_0$ with slope $-1$. By Corollary \ref{cor:NNDq0}, it is enough to prove that $(\mathcal{F},L_0)$ is non-degenerate if and only if $(\mathcal{M}',\mathcal{F}')$ is Newton non-degenerate at every $q \in \mathcal{T}$, equivalently if and only if each $q \in \mathcal{T}$ is a presimple point. Let $\eta'$ be as in the proof of Lemma \ref{lema:lugarnopresimple}. Noting that $(\mathcal{F},L_0)$ is non-degenerate if and only if $(A_1,A_2)$ is non-degenerate, we are done as a result of Corollary \ref{cor:ladopendiente-1}.
\end{proof}
\begin{cor} \label{cor:estabilidad}
	Consider a CH-foliated surface $(\mathcal{M},\mathcal{F})$. Let $\pi:(\mathcal{M}',\mathcal{F'}) \rightarrow (\mathcal{M},\mathcal{F})$ be a finite composition of combinatorial blowing-ups.
	We have that $(\mathcal{M},\mathcal{F})$ is Newton non-degenerate at a given point $p \in |M|$ if and only if $(\mathcal{M}',\mathcal{F'})$ is Newton non-degenerate at each point $q\in \pi^{-1}(p)$.
\end{cor}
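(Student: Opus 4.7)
The plan is to proceed by induction on the length $N$ of the composition $\pi=\pi_1\circ\pi_2\circ\cdots\circ\pi_N$, using Proposition \ref{prop:estabilidadNND} as the engine at each step. A preliminary observation that needs to be made explicit is that the CH property propagates along the sequence: each intermediate foliated surface $(\mathcal{M}^i,\mathcal{F}^i)$ is again CH, since being CH depends only on the foliation and is preserved by blowing-ups. This ensures that Proposition \ref{prop:estabilidadNND} can legitimately be applied at every stage.

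For the base case $N=1$, let $c$ be the center of the single combinatorial blowing-up $\pi_1$. If $c=p$, the statement is exactly Proposition \ref{prop:estabilidadNND}. If $c\neq p$, then $\pi_1$ is a local isomorphism near $p$: the fiber $\pi_1^{-1}(p)$ consists of a single point $p'$ and the germs $(\mathcal{M},\mathcal{F})_p$ and $(\mathcal{M}',\mathcal{F}')_{p'}$ are isomorphic as germs of foliated surfaces, together with their adapted divisors. Since Newton non-degeneracy at a point is an intrinsic property of the germ, the equivalence is immediate.

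For the inductive step, assume the corollary holds for any composition of $N-1$ combinatorial blowing-ups, and factor $\pi=\pi_1\circ\sigma$ with $\sigma=\pi_2\circ\cdots\circ\pi_N$. Write $(\mathcal{M}^1,\mathcal{F}^1)$ for the surface obtained after $\pi_1$. Applying the base case to $\pi_1$ gives that $(\mathcal{M},\mathcal{F})$ is Newton non-degenerate at $p$ if and only if $(\mathcal{M}^1,\mathcal{F}^1)$ is Newton non-degenerate at every point $q\in\pi_1^{-1}(p)$. For each such $q$, the inductive hypothesis applied to $\sigma\colon(\mathcal{M}',\mathcal{F}')\to(\mathcal{M}^1,\mathcal{F}^1)$ at the point $q$ yields that $(\mathcal{M}^1,\mathcal{F}^1)$ is Newton non-degenerate at $q$ if and only if $(\mathcal{M}',\mathcal{F}')$ is Newton non-degenerate at every point of $\sigma^{-1}(q)$. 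Chaining the two equivalences and using the set-theoretic identity
\[
\pi^{-1}(p)=\bigcup_{q\in\pi_1^{-1}(p)}\sigma^{-1}(q)
\]
completes the argument.

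There is no real obstacle: the content of the corollary is concentrated entirely in the single-step Proposition \ref{prop:estabilidadNND}, which handles both the dicritical slope $-1$ case (via Corollary \ref{cor:ladopendiente-1}) and the sides of other slopes (via Corollary \ref{cor:NNDq0}). The only point requiring care is the bookkeeping in the base case when the center $c$ of a blowing-up differs from $p$, so that the induction correctly accounts for centers that are not themselves infinitely near to $p$; this is handled by the trivial local isomorphism argument above.
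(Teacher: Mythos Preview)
Your proof is correct and is exactly the argument the paper has in mind: the corollary is stated immediately after Proposition \ref{prop:estabilidadNND} without proof, the intended justification being precisely the induction on the number of blowing-ups that you spell out. Your extra care about the CH property propagating and about centers distinct from $p$ is welcome bookkeeping, but there is no difference in approach.
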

\subsection{Equivalence statement}
This subsection is devoted to prove the following result:
\begin{teo} \label{teo:equivalencia}
	A complex hyperbolic foliated surface $(\mathcal{M},\mathcal{F})$ is Newton non-degenerate if and only if it is of weak toric type.
\end{teo}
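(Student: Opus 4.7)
The plan is to prove the two implications separately, with most of the work concentrated in the direction ``Newton non-degenerate implies weak toric type''. For the easy direction, suppose $(\mathcal{M},\mathcal{F})$ is of weak toric type and let $\pi:(\mathcal{M}',\mathcal{F}')\to(\mathcal{M},\mathcal{F})$ be a combinatorial pre-reduction. Every point of $(\mathcal{M}',\mathcal{F}')$ is presimple, hence Newton non-degenerate by the presimple clause of the definition. Fixing any $p\in|M|$ and applying Corollary \ref{cor:estabilidad} to the restriction of $\pi$ over $p$, I conclude that $(\mathcal{M},\mathcal{F})$ is Newton non-degenerate at $p$, and this holds for every $p$.

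For the converse, I would work locally at each non-presimple point. By the definition of Newton non-degeneracy, any non-presimple point $p$ must satisfy $e_p(E)=2$, and there are only finitely many such corners. For each corner non-presimple point $p$, I aim to produce a finite sequence of combinatorial blowing-ups, centered over $p$, after which every point above $p$ is presimple; concatenating these local constructions then yields a combinatorial pre-reduction of the whole foliated surface, which is precisely the weak toric type condition.

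The local construction at a corner $p$ proceeds by induction on a suitable numerical invariant of the Newton polygon $N=N^{\prec}_p(\mathcal{M},\mathcal{F})$. Since $p$ is non-presimple, $N\neq\mathbb{R}_{\geq 0}^2$ by Remark \ref{rem:presimpleunsolovertice}. Blowing up $p$ combinatorially, Proposition \ref{prop:estabilidadNND} preserves Newton non-degeneracy. The new non-presimple points above $p$ must lie over $q_0$, over $q_\infty$, or in $\mathcal{T}$; in the last case, Corollary \ref{cor:ladopendiente-1} applied to the non-degenerate slope-$-1$ side (when it exists) forces every point of $\mathcal{T}$ to be presimple. Thus the new non-presimple points all lie over $q_0$ or $q_\infty$, with Newton polygons $N_0$ and $N_\infty$ obtained from $N$ by the slope-shifting bijections $\mathcal{S}^{+}_p\to\mathcal{S}_{q_0}$ and $\mathcal{S}^{-}_p\to\mathcal{S}_{q_\infty}$.

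The main obstacle is identifying an integer-valued invariant of $N$ that strictly decreases when we pass from $N$ to both $N_0$ and $N_\infty$, and that vanishes exactly when $N=\mathbb{R}_{\geq 0}^2$. A natural candidate is a lattice-point count, such as the cardinality of a suitably bounded subset of $\mathbb{Z}_{\geq 0}^2\setminus N$, or a Newton-number type quantity, whose monotone behaviour under the coordinate changes $(m,n)\mapsto(m+n-d,n)$ and $(m,n)\mapsto(m,m+n-d)$ can be read off directly from the explicit formulas in Lemma \ref{lema:NNDq0}. Once this invariant is verified to drop strictly at each non-presimple corner after combinatorial blowing-up, the induction terminates and the desired local pre-reduction is obtained, completing the proof.
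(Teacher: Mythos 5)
Your argument coincides with the paper's in everything except the last step, and that last step is where your writeup has a genuine gap. The easy direction (weak toric type $\Rightarrow$ Newton non-degenerate via presimplicity of all points after a combinatorial pre-reduction plus Corollary \ref{cor:estabilidad}) is exactly the paper's argument. For the converse, your reduction to corners, the use of Proposition \ref{prop:estabilidadNND} after one combinatorial blowing-up, and the observation that the points of $\mathcal{T}$ are forced to be presimple (so that new non-presimple points sit only at $q_0$ or $q_\infty$, i.e.\ again at corners) is also exactly what the paper does. But you then leave the proof hanging on the existence of an integer invariant that strictly drops from $N$ to both $N_0$ and $N_\infty$, and you never exhibit or verify one; ``a lattice-point count \dots\ or a Newton-number type quantity'' is a conjecture, not a proof, and checking strict decrease of such a quantity under the slope-shifting maps of Lemma \ref{lema:NNDq0} is precisely the nontrivial part you have skipped. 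As written, the induction has no verified termination and the proof is incomplete.

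The paper closes this gap without any bespoke invariant: since a Newton non-degenerate foliated surface has all its non-presimple points among the finitely many corners of $E$, the set of non-presimple points is finite, so the \emph{minimal pre-reduction of singularities exists} by the theorem derived from Seidenberg's result quoted in Section 2 (a pre-reduction exists iff the set of non-presimple points is finite, and then a minimal one exists). The minimal pre-reduction is characterized by having all its centers non-presimple, and your own argument shows that at every stage of the process every non-presimple point satisfies $e_q(E')=2$; hence every center of the minimal pre-reduction is a corner and the minimal pre-reduction is combinatorial. Termination is thus borrowed from Seidenberg rather than re-proved. If you replace your final paragraph by this appeal to the existence of the minimal pre-reduction, your proof becomes the paper's proof; if you insist on an explicit decreasing invariant, you must actually define it and verify the strict decrease, which you have not done.
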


Let us do the proof at each point $p\in |M|$. 
\begin{enumerate}
	\item $e_p(E)\leq 1$. We have, by definition, that $(\mathcal{M},\mathcal{F})$ is Newton non-degenerate at $p$ if and only if $p$ is a presimple point. That happens if and only if $(\mathcal{M},\mathcal{F})$ is of weak toric type at $p$, by Remark \ref{rem:tipotoricoenpuntosnoesquina}.
	\item $e_p(E)=2$. Assume that $(\mathcal{M},\mathcal{F})$ is of weak toric type at $p$. This means that there is a finite composition of combinatorial blowing-ups $\pi: (\mathcal{M}',\mathcal{F}') \rightarrow (\mathcal{M}, \mathcal{F})$, such that each $q \in \pi^{-1}(p)$ is a presimple point. As a consequence, by definition $(\mathcal{M}',\mathcal{F}')$ is Newton non-degenerate for every $q \in \pi^{-1}(p)$. As a result of Corollary \ref{cor:estabilidad}, we obtain that $(\mathcal{M},\mathcal{F})$ is Newton non-degenerate at $p$. Conversely, assume that $(\mathcal{M},\mathcal{F})$ is Newton non-degenerate at $p$. Let us perform the combinatorial blowing-up $\pi: (\mathcal{M}',\mathcal{F}') \rightarrow (\mathcal{M}, \mathcal{F})$ with center $p$. We have that $(\mathcal{M}',\mathcal{F}')$ is Newton non-degenerate for each $q\in \pi^{-1}(p)$ by the stability property stated in Proposition \ref{prop:estabilidadNND}. As a consequence, the only non-presimple points $q \in \pi^{-1}(p)$ satisfy $e_q(E')=2$. The situation repeats, since $(\mathcal{M}',\mathcal{F}')$ is again Newton non-degenerate at these points. This allow us to conclude that the minimal pre-reduction of singularities of $(\mathcal{M}, \mathcal{F})_p$ is combinatorial and thus $(\mathcal{M}, \mathcal{F})$ is of weak toric type at $p$.
\end{enumerate}

\section{Non-degenerate Foliations on Projective Toric Surfaces} \label{sec:Newton Non-degenerate Foliations on Projective Toric Surfaces}
We recall the definition of projective toric surfaces and some results about their birational geometry. We introduce the homogeneous polygon $\Delta_h(\mathcal{F})$ of a foliation $\mathcal{F}$ on the complex projective plane $\mathbb{P}_{\mathbb{C}}^2$ and we prove that its area is zero, in the case of weak toric type complex hyperbolic foliations.
\subsection{Birational geometry of toric ambient surfaces}
The contents on this subsection can be essentially found at \cite{Ewa}. 

A \emph{toric surface} is an irreducible complex surface $S$ containing a two-dimensional complex torus $T\simeq (\mathbb{C}^*)^2$ as a Zariski open subset, such that the action of $T$ on itself extends to an algebraic action on $S$. The natural blowing-ups in the category of toric surfaces are the ones compatible with the torus action (\emph{equivariant}). This happens if and only if the center of the blowing-up is an orbit.

The union of the non-dense orbits of the torus action in a nonsingular toric surface $S$ is a strong normal crossings divisor $E_S$. We say that the pair $(S,E_S)$ is a \emph{toric ambient surface}. The points $p\in |S|$ with $e_p(E_{S})=2$ are exactly the closed orbits of the torus action.  As a consequence, in the category of toric surfaces, the equivariant blowing-ups are exactly the combinatorial ones.
\begin{rem}
	In view of the fact that a nonsingular toric surface $S$ gives in a natural way a toric ambient surface $(S,E_S)$, we use the expression \emph{foliation $\mathcal{F}$ on $S$} to make reference also to the foliated surface $((S,E_S),\mathcal{F})$.
\end{rem}
\begin{ejem}
	The first example of nonsingular projective toric surface is the projective plane $\mathbb{P}^2_\mathbb{C}$. The torus action is given in homogeneous coordinates $[X_0,X_1,X_2]$ by
	$$
	((t_1,t_2),[x_0,x_1,x_2]) \mapsto [x_0,t_1x_1,t_2x_2].
	$$
	The associated divisor is given by the three coordinate lines $X_0X_1X_2=0$. The standard affine example of nonsingular toric surface is $\mathbb{C}^2$. The torus action is given in coordinates $(x_1,x_2)$ by
	$$
	((t_1,t_2),(\alpha_1,\alpha_2)) \mapsto (t_1\alpha_1,t_2\alpha_2).
	$$
	The associated divisor is given by the coordinate lines $x_1x_2=0$. When we refer to $\mathbb{P}^2_\mathbb{C}$ or $\mathbb{C}^2$ as toric surfaces, we implicitly assume the above actions and coordinates.
\end{ejem}
The following result concerns the birational geometry of projective toric surfaces.
\begin{teo} [See \cite{Ewa}] \label{Teo:Oda}
	Given two nonsingular projective toric surfaces $S$ and $S'$, there is a nonsingular projective toric surface $S''$ and two finite sequences of equivariant blowing-ups $\pi:S'' \rightarrow S$, $\pi':S''\rightarrow S'$.
\end{teo}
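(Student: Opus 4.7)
The plan is to translate the statement into the combinatorial language of fans, where nonsingular projective toric surfaces correspond bijectively to complete regular fans $\Sigma \subset \mathbb{R}^2$ (a fan is \emph{regular} when each of its two-dimensional cones is generated by a $\mathbb{Z}$-basis of $\mathbb{Z}^2$, and \emph{complete} when the union of its cones covers $\mathbb{R}^2$). Under this dictionary, the equivariant blowing-up of a closed torus orbit corresponds to the \emph{star subdivision} of the associated two-dimensional cone $\sigma=\mathrm{cone}(v_1,v_2)$ by inserting the new ray generated by $v_1+v_2$. Thus the statement to prove becomes: given two complete regular fans $\Sigma$, $\Sigma'$ in $\mathbb{R}^2$, there exists a complete regular fan $\Sigma''$ obtainable from each of $\Sigma$ and $\Sigma'$ by a finite sequence of star subdivisions.

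First I would construct a complete (not necessarily regular) common refinement $\widetilde{\Sigma}$ of $\Sigma$ and $\Sigma'$, taking as its set of one-dimensional rays the union of the rays of $\Sigma$ and of $\Sigma'$ and as its two-dimensional cones the cones spanned by consecutive rays around the origin. This $\widetilde{\Sigma}$ refines both $\Sigma$ and $\Sigma'$, but its two-dimensional cones may fail to be generated by $\mathbb{Z}$-bases.

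Next I would argue that $\widetilde{\Sigma}$ can be reached from $\Sigma$ (and symmetrically from $\Sigma'$) by a finite sequence of star subdivisions. Since the new rays of $\widetilde{\Sigma}$ relative to $\Sigma$ are finitely many and each lies inside some two-dimensional cone $\sigma$ of $\Sigma$, it suffices to show: given a two-dimensional cone $\sigma$ of a regular fan and a primitive vector $v$ in its interior, one can introduce the ray through $v$ by iterated star subdivisions. This is a classical inductive argument: the first star subdivision of $\sigma$ splits $\sigma$ into two new cones $\sigma_1,\sigma_2$, the vector $v$ lies in the interior or on the boundary of one of them, and one continues. Termination follows from a lattice-area estimate (the area of the subcone containing $v$ decreases strictly).

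Finally I would apply the standard resolution procedure for two-dimensional toric singularities --- the Hirzebruch--Jung continued fraction algorithm --- to transform $\widetilde{\Sigma}$ into a regular fan $\Sigma''$ by further star subdivisions (each two-dimensional cone of $\widetilde{\Sigma}$ is resolved independently by inserting the Hirzebruch--Jung chain of intermediate rays). The resulting $\Sigma''$ is a complete regular fan reached from $\Sigma$ by the composition of these two sequences of star subdivisions, and likewise from $\Sigma'$; translating back gives the nonsingular projective toric surface $S''$ together with the equivariant birational morphisms $\pi:S''\to S$ and $\pi':S''\to S'$.

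The main obstacle is the second step: verifying that an arbitrary refinement of a regular fan in dimension two is achieved by iterated star subdivisions rather than by more general combinatorial moves. The proof relies specifically on the two-dimensional geometry (cones around the origin are linearly ordered cyclically), so the argument does not extend as stated to higher dimensions; but in our setting this ordering and the Hirzebruch--Jung algorithm together dispose of the difficulty.
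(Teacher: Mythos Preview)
The paper does not actually prove this theorem: it is quoted from Ewald's book and used as a black box. Your sketch is the standard fan-theoretic argument and the ingredients are the right ones, but the assembly of steps 4 and 5 has a genuine gap. You assert that $\widetilde\Sigma$ itself is reached from $\Sigma$ by iterated star subdivisions; however, the star subdivisions relevant here (inserting $v_1+v_2$ into a regular cone $\mathrm{cone}(v_1,v_2)$, which is exactly what corresponds to blowing up a smooth torus-fixed point) preserve regularity, whereas $\widetilde\Sigma$ need not be regular. Even when $\widetilde\Sigma$ is regular, your procedure for introducing a prescribed ray $v$ necessarily inserts auxiliary rays along the way, so what you actually produce is some regular refinement of $\widetilde\Sigma$ that depends on the starting fan; the refinement built from $\Sigma$ has no reason to coincide with the one built from $\Sigma'$, and hence no common $\Sigma''$ has been exhibited. (A smaller slip: the lattice area of the fundamental triangle of a regular $2$-cone is always $1/2$, so ``area'' cannot witness termination; what decreases is the sum of the coordinates of $v$ in the current basis, i.e.\ a Euclidean-algorithm height.)

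The clean fix is to swap the order. First let $\Sigma''$ be the Hirzebruch--Jung regularization of $\widetilde\Sigma$, so that $\Sigma''$ is a \emph{single} regular fan refining both $\Sigma$ and $\Sigma'$. Then prove the key lemma: every regular refinement of a regular two-dimensional fan is obtained by a finite sequence of barycentric star subdivisions. This follows because in any nontrivial regular subdivision of $\mathrm{cone}(v_1,v_2)$ the ray through $v_1+v_2$ must occur among the inserted rays (otherwise $v_1+v_2$, being primitive, would lie in the interior of one of the regular subcones, forcing both generators of that subcone to have coordinate-sum $1$ in the basis $v_1,v_2$, hence to be $v_1$ and $v_2$ themselves); one then inducts on the number of inserted rays. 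Applying the lemma to $\Sigma''$ over $\Sigma$ and over $\Sigma'$ yields the two sequences of equivariant blow-ups.
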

\begin{cor}\label{cor:birracional}
	A given nonsingular projective toric surface $S$ is obtained from the toric surface $\mathbb{P}_{\mathbb{C}}^2$ by a finite sequence  $\mathbb{P}_{\mathbb{C}}^2 \rightarrow S$ of combinatorial blowing-ups and blowing-downs.
\end{cor}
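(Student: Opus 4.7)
The plan is to derive the corollary as an immediate consequence of Theorem \ref{Teo:Oda}, together with the identification (recalled in this subsection) between equivariant blowing-ups on a nonsingular toric surface and combinatorial blowing-ups, that is, blowing-ups centered at points $p$ with $e_p(E_S)=2$.

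First I would apply Theorem \ref{Teo:Oda} to the pair $(S,S')$, taking $S'=\mathbb{P}^2_{\mathbb{C}}$. This produces a nonsingular projective toric surface $S''$ together with two finite sequences of equivariant blowing-ups
\[
\pi: S'' \longrightarrow S, \qquad \pi': S'' \longrightarrow \mathbb{P}^2_{\mathbb{C}}.
\]
Next I would reverse the roles of these morphisms: reading $\pi'$ in the opposite direction, it becomes a finite sequence of blowing-\emph{downs} from $\mathbb{P}^2_{\mathbb{C}}$ to $S''$, while $\pi$ is still a finite sequence of blowing-\emph{ups} from $S''$ down to $S$. Composing, I obtain a finite sequence
\[
\mathbb{P}^2_{\mathbb{C}} \;\longrightarrow\; S'' \;\longrightarrow\; S
\]
consisting of blowing-ups and blowing-downs, alternating in exactly the manner required by the statement.

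Finally, I would verify that each step is \emph{combinatorial} in the sense of Subsection on combinatorial blowing-ups. Since the ambient surfaces $\mathbb{P}^2_{\mathbb{C}}, S''$ and $S$ are all nonsingular projective toric, and every equivariant blowing-up is centered at a closed orbit of the torus action, the center is necessarily a point $p$ with $e_p(E)=2$; by definition such a blowing-up is combinatorial, and the same applies to its inverse (a combinatorial blowing-down). Hence the whole composition is a finite sequence of combinatorial blowing-ups and blowing-downs, as required.

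There is no serious obstacle here: the content is entirely packaged in Theorem \ref{Teo:Oda} and in the identification \emph{equivariant $=$ combinatorial} on nonsingular toric surfaces. The only minor point to check carefully is that when we invert the sequence $\pi'$ we are entitled to call the inverse steps combinatorial blowing-downs; this is immediate because each blowing-up in $\pi'$ has a center at a closed torus orbit and therefore its inverse contracts an invariant exceptional divisor meeting the rest of $E_{S''}$ at two points with $e(E)=2$.
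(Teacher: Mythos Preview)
Your approach is exactly the paper's: apply Theorem~\ref{Teo:Oda} with $S'=\mathbb{P}^2_{\mathbb{C}}$ and concatenate the two sequences, using that equivariant $=$ combinatorial on nonsingular toric surfaces. One small slip to fix: your labels ``up'' and ``down'' are swapped---going from $\mathbb{P}^2_{\mathbb{C}}$ to $S''$ (the inverse of $\pi'$) consists of blowing-\emph{ups}, while passing from $S''$ to $S$ along $\pi$ consists of blowing-\emph{downs}; this does not affect the argument, since the statement only asks for a sequence of both.
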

\begin{proof}
	Take $S'= \mathbb{P}_{\mathbb{C}}^2$ in Theorem \ref{Teo:Oda}.
\end{proof}

\subsection{Newton non-degenerate foliations on $\mathbb{P}^2_\mathbb{C}$}
The aim of this subsection is to prove the following statement:
\begin{prop} \label{prop:area0}
	The homogeneous polygon $\Delta_h(\mathcal{F})$ of a CH-Newton non-degenerate foliation $\mathcal{F}$ on $\mathbb{P}_{\mathbb{C}}^2$ is a segment or a single point.
\end{prop}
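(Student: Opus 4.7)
The idea is to apply the Bernstein--Kouchnirenko Theorem~\ref{teo:KKB} and exploit the absence of singularities in the dense torus that is forced by the weak toric type structure.

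First, I would invoke Theorem~\ref{teo:equivalencia} to replace the Newton non-degeneracy hypothesis by the existence of a pre-reduction of singularities composed entirely of combinatorial blowing-ups. Since such blowing-ups only take place at corners of the divisor ($e_p(E)=2$), and since the dense torus $T=\mathbb{P}^2_{\mathbb{C}}\setminus(X_0X_1X_2=0)$ is disjoint from every corner arising from combinatorial blowing-ups of the three original corners of $E=(X_0X_1X_2=0)$, any singularity of $\mathcal{F}$ lying in $T$ would survive the pre-reduction as a point with $e_q(E')=0$. Such a point cannot be presimple, contradicting pre-desingularization. Hence $\mathcal{F}$ has no singularities in $T$.

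Next, I would translate this into an affine statement. In the chart around $[1{:}0{:}0]$ with coordinates $(x_1,x_2)$, the foliation is described by an adapted logarithmic generator $\eta=B_1\,dx_1/x_1+B_2\,dx_2/x_2$, whose common zeros in $(\mathbb{C}^*)^2$ are precisely the torus singularities; hence $(B_1,B_2)$ has no common zero there. Newton non-degeneracy at $[1{:}0{:}0]$, $[0{:}1{:}0]$ and $[0{:}0{:}1]$, transported to the $(x_1,x_2)$-chart via the monomial changes of coordinates in Remark~\ref{rem:cambiomonomial}, yields that $(B_1,B_2)$ is in general position in the sense of Definition~\ref{def:posiciongeneral}: the compact sides of $\Delta(B_1,B_2)$ of negative slope come from the Newton polygon at $[1{:}0{:}0]$, while the remaining sides arise via monomial substitutions from the Newton polygons at the other two corners. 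Applying Theorem~\ref{teo:KKB} then gives $\mathrm{MA}(\Delta(B_1),\Delta(B_2))=0$, whence by Remark~\ref{rem:areamixta0} at least one of $\Delta(B_1),\Delta(B_2)$ is a single point or they are parallel segments.

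Finally, I would pass from this local condition to the homogeneous statement. Since the projection of $\Delta_h(\mathcal{F})$ from the corner $[1{:}0{:}0]$ can be identified with $\mathrm{conv}(\Delta(B_1)\cup\Delta(B_2))$, the collapse of the latter to a segment or point forces $\Delta_h(\mathcal{F})$ to have vanishing area inside the simplex $\{\alpha_0+\alpha_1+\alpha_2=d\}$, so it is a segment or a point. The step I expect to be the most delicate is the verification that $(B_1,B_2)$ is in general position: it requires tracking each compact side of $\Delta(B_1,B_2)$ across the three different affine charts and matching it with a compact Newton side at the appropriate corner, using carefully the invariance properties of non-degeneracy under monomial substitutions from Remark~\ref{rem:cambiomonomial}.
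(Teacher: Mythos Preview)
Your overall strategy matches the paper's: work in one affine chart, show the pair $(B_1,B_2)$ is in general position, use the absence of torus singularities together with Theorem~\ref{teo:KKB} to obtain zero mixed area, and deduce that the affine polygon (hence $\Delta_h(\mathcal{F})$) is a segment or a point. There are, however, two genuine gaps in the execution.

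First, your claim that every side of $\Delta(B_1,B_2)$ arises from a compact side of some Newton polygon at a corner is not correct. Sides of $\Delta_h(\mathcal{F})$ lying in a coordinate hyperplane $\{\sigma_\ell=0\}$ (equivalently, sides of the affine polygon lying on an axis or on the line $\tau_1+\tau_2=d$) are contained in \emph{non-compact} sides of every $N_i$; see the discussion preceding Lemma~\ref{lema:poligonos}. The paper treats these sides by a separate argument (Lemma~\ref{lema:generalposition}): degeneracy along such a side would force a non-presimple point of the form $[1,0,\lambda]$ with $e_p(E)=1$, contradicting the weak toric type property. You have this property available from your first paragraph, but you do not invoke it here, and the monomial substitutions of Remark~\ref{rem:cambiomonomial} alone cannot supply it.

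Second, your final step is incomplete. From $\mathrm{MA}(\Delta(B_1),\Delta(B_2))=0$ you only conclude that one of them is a point or that they are parallel segments; when they are parallel \emph{disjoint} segments, the convex hull $\Delta(B_1,B_2)$ is a genuine quadrilateral, not a segment. The paper rules this out by re-using general position: each side $L$ of $\Delta(B_1,B_2)$ must satisfy $\Delta(B_1)\cap L\neq\emptyset$ and $\Delta(B_2)\cap L\neq\emptyset$ (otherwise the support-restriction vanishes and the pair along $L$ is degenerate, Remark~\ref{rem:una de las restricciones es cero}), which is impossible if $\Delta(B_1)$ and $\Delta(B_2)$ are themselves two opposite sides. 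You need this extra step before concluding.
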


\subsubsection{Foliations on the projective plane}
Before doing the proof, we recall basic definitions of foliations on the projective plane (see \cite{Can-C-D}) and we introduce the notion of homogeneous polygon. A foliation $\mathcal{F}$ on $\mathbb{P}_{\mathbb{C}}^2$ is given by a logarithmic homogeneous differential form
$$
W=A_0dX_0/X_0+A_1dX_1/X_1+A_2dX_2/X_2, \quad  A_i \in \mathbb{C}[X_0,X_1,X_2], \; i=0,1,2,
$$
where the coefficients $A_i$ are homogeneous polynomials of the same degree $d_{\mathcal{F}}$, without common factor and such that $A_0+A_1+A_2=0$. We say that $W$ is a \emph{homogeneous generator of $\mathcal{F}$}. If $W'$ is another homogeneous generator of $\mathcal{F}$, then $W'=kW$ with $k \in \mathbb{C}^*$ and conversely.
\begin{rem}
	The number $d_\mathcal{F}$ does not coincide with the so-called degree of the foliation. For instance, if the divisor $X_0X_1X_2=0$ has no dicritical components, the foliation is given by the holomorphic form
	$$
	X_0X_1X_2W =X_1X_2A_0dX_0+X_0X_2A_1dX_1+X_0X_1A_2dX_2,
	$$
	without common factors in the coefficients. Hence, the foliation degree is equal to $d_\mathcal{F}+1$. In a general way, the foliation degree is equal to $d_{\mathcal{F}}+1-\epsilon$, where $\epsilon$ is the number of dicritical components of $X_0X_1X_2=0$.
\end{rem}
\begin{defin}
	The \emph{homogeneous polygon $\Delta_h(\mathcal{F})$ of a foliation $\mathcal{F}$ on $\mathbb{P}_{\mathbb{C}^2}$} is given by
	$$
	\Delta_h(\mathcal{F})=\text{convex hull}\left(\Delta(A_0)\cup \Delta(A_1) \cup \Delta(A_2)\right) \subset \mathbb{R}^3,
	$$
	where $W=A_0dX_0/X_0+A_1dX_1/X_1+A_2dX_2/X_2$ is a homogeneous generator of $\mathcal{F}$.
\end{defin}
\begin{rem}
	Although $\Delta_h(\mathcal{F})$ is contained in $\mathbb{R}^3$, the name ``homogeneous polygon'' is due to the fact that $\Delta_h(\mathcal{F})\subset d_{\mathcal{F}}\Sigma^0_3$, where $\Sigma^0_3=\{(\sigma_0,\sigma_1,\sigma_2) \in \mathbb{R}_{\geq 0}^3 ; \, \sigma_0+\sigma_1+\sigma_2 = 1\}$. Note also that $\Delta_h(\mathcal{F})\cap (\sigma_i=0) \ne \emptyset$ for every $i=0,1,2$, since the coefficients $A_0,A_1,A_2$ have no common factors.
\end{rem}
We are interested in describing a foliation $\mathcal{F}$ of $\mathbb{P}_{\mathbb{C}}^2$ in terms of affine charts. We read the complex projective plane $\mathbb{P}_{\mathbb{C}}^2$ in affine charts $A_i=(X_i\ne 0) \subset \mathbb{P}_{\mathbb{C}}^2$, for $i=0,1,2$. We identify each $A_i$ with the affine toric variety $\mathbb{C}^2$ through the coordinates $(x_j^i,x_k^i)$ given by $x_j^i=X_j/X_i, \, x_k^i=X_k/X_i$, for $j, k \ne i$. Let us denote by $O_i$ to the origin of $A_i$. Note that $O_0=[1,0,0]$, $O_1=[0,1,0]$, $O_2=[0,0,1]$. We also denote by $D_i$ the divisor $E_{A_i}=(x_j^ix_k^i=0)$. We call \emph{affine $i$-chart $\mathcal{F}_i$ of $\mathcal{F}$} to the restriction $\mathcal{F}_i=\mathcal{F}|_{A_i}$. A generator of $\mathcal{F}_i$ adapted to the divisor $D_i$ is given by $\eta_i=a_j^idx_j^i/x_j^i+ a_k^idx_k^i/x_k^i$, for $\{i,j,k\}=\{0,1,2\}$, where $a_{\ell}^i=A_{\ell}(X_0/X_i,X_1/X_i,X_2/X_i)\in \mathbb{C}[x_j^i,x_k^i]$, for $\ell\in \{j,k\}$.
\begin{defin}
	An algebraic foliation $\mathcal{G}$ on the affine toric surface $\mathbb{C}^2$ is in \emph{general position} if the pair $(a_1,a_2)$ is in general position, where $\eta=a_1 dx_1/x_1+a_2 dx_2/x_2$, with $a_1,a_2 \in \mathbb{C}[x_1,x_2]$ is a generator of $\mathcal{G}$ adapted to the divisor $E_{\mathbb{C}^2}=(x_1x_2=0)$. The \emph{affine polygon $\Delta(\mathcal{G})$ of  $\mathcal{G}$} is defined by $\Delta(\mathcal{G})=\Delta(a_1,a_2)$.
\end{defin}
In particular, we have the affine polygons $\Delta(\mathcal{F}_i)$ associated to the affine $i$-charts of $\mathcal{F}$. The relationship between $\Delta_h(\mathcal{F})$ and $\Delta(\mathcal{F}_i)$ is given by $\Delta(\mathcal{F}_i)=\Phi_{i}^d(\Delta_h(\mathcal{F}))$, where $d=d_{\mathcal{F}}$ and $\Phi_i^d$ is the projection
$$
\Phi_{i}^d:d\Sigma_3^0 \rightarrow d\Sigma_2=\{(\sigma_1,\sigma_2) \in \mathbb{R}_{\geq 0}^2 ; \; \sigma_1+\sigma_2 \leq d\}
$$
defined by $\sigma =(\sigma_0,\sigma_1,\sigma_2) \mapsto (\sigma_j,\sigma_k)$, with $j,k\neq i$. We call to $\Delta(\mathcal{F}_i)$ the \emph{$i$-chart of $\Delta_h(\mathcal{F})$}. Given a side $L$ of $\Delta_h(\mathcal{F})$, the side $L_i=\Phi_{i}^d(L)$ of $\Delta(\mathcal{F}_i)$ is called the \emph{$i$-chart of $L$}.

Let us describe now the relationship between the homogeneous polygon $\Delta_h(\mathcal{G})$ and the Newton polygons $N_i=N^{\prec_i}_{\mathcal{O}_i}((A_i, D_i),\mathcal{F}_i)$, where the ordering $\prec_i$ is given by the natural order of the indices, that is $(x_j^i=0) \prec (x_k^i=0)$ if and only if $j> k$. Note that
$
N_i= \text{convex hull} (\Delta(\mathcal{F}_i)+\mathbb{R}_{\geq 0}^2),
$
then the compact sides of the Newton polygon $N_i$ are some of the sides of the affine polygon $\Delta(\mathcal{F}_i)$. Let $L$ be a side of $\Delta_h(\mathcal{F})$, we have that $L\subset \{\sigma\in d_{\mathcal{F}}\Sigma_3^0; \; \sigma_i=0\}$ if and only if $L_{\ell}$ is contained in a non-compact side of $N_{\ell}$ for ${\ell}=j,k.$ Otherwise, we have the following statement:
\begin{lema} \label{lema:poligonos}
	Given a side $L$ of $\Delta_h(\mathcal{F})$ such that $L\not\subset \{\sigma\in d_{\mathcal{F}}\Sigma_3^0; \; \sigma_{\ell}=0\}$ for any $\ell=0,1,2$, there is a unique affine chart $A_i$ such that $L_i$ is a compact side of $N_i$.
\end{lema}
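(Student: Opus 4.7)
The plan is to identify the unique chart $A_i$ via the geometry of the supporting line of $L$ inside the 2-simplex $d_{\mathcal{F}}\Sigma^0_3$. Since each projection $\Phi_i^{d_{\mathcal{F}}}$ is an affine bijection from $d_{\mathcal{F}}\Sigma^0_3$ onto $d_{\mathcal{F}}\Sigma_2$ preserving sides and half-planes, I work in the 2-plane containing $d_{\mathcal{F}}\Sigma^0_3$. The line through $L$ divides this plane into two half-planes; I call \emph{outward} the open half-plane disjoint from $\Delta_h(\mathcal{F})$. The strategy is to single out an index $i$ geometrically, to show that $L_i$ is then a compact side of $N_i$, and to rule out the other two charts.

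I would first prove that the outward half-plane contains exactly one vertex $P_i$ of the triangle $d_{\mathcal{F}}\Sigma^0_3$. It contains \emph{at most} one, because if two vertices $P_j,P_k$ were strictly outward, the whole edge $[P_j,P_k]=\{\sigma_i=0\}$ of the triangle would be outward, contradicting that $\Delta_h(\mathcal{F})$ meets this edge (a consequence of the no-common-factor condition on $A_0,A_1,A_2$). It contains \emph{at least} one, because $L$ is not on any edge of the triangle, so its supporting line enters the interior of $d_{\mathcal{F}}\Sigma^0_3$ and cannot leave all three vertices in the closed inward half-plane -- otherwise $L$ would itself lie on an edge.

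For this distinguished $i$, the projection $\Phi_i^{d_{\mathcal{F}}}$ sends $P_i$ to the origin of the $(\sigma_j,\sigma_k)$-plane, so the origin is strictly outward of $L_i$. To show that $L_i$ has strictly negative slope, I write the direction of $L$ as $v=(v_0,v_1,v_2)$ with $v_0+v_1+v_2=0$ and choose a linear functional $f=\sum_\ell\alpha_\ell\sigma_\ell$ equal to $c$ on the supporting line and with $f\geq c$ on $\Delta_h(\mathcal{F})$, so that $\sum_\ell\alpha_\ell v_\ell=0$. The strict outwardness of $P_i$ gives $\alpha_i<c/d_{\mathcal{F}}$, while for every $\ell$ the fact that $\Delta_h(\mathcal{F})$ meets the edge $\sigma_\ell=0$ gives $\max\{\alpha_{\ell'},\alpha_{\ell''}\}\geq c/d_{\mathcal{F}}$ for $\{\ell',\ell''\}=\{0,1,2\}\setminus\{\ell\}$; these three inequalities force $\alpha_j,\alpha_k\geq c/d_{\mathcal{F}}$ for $\{j,k\}=\{0,1,2\}\setminus\{i\}$. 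Eliminating $v_i$ between $\sum v_\ell=0$ and $\sum\alpha_\ell v_\ell=0$ yields $(\alpha_j-\alpha_i)v_j+(\alpha_k-\alpha_i)v_k=0$ with both coefficients strictly positive, so $v_jv_k<0$. Together with the strict separation of origin from $\Delta(\mathcal{F}_i)$, this shows $L_i$ sits on the ``lower-left'' boundary of $\Delta(\mathcal{F}_i)$ and is a compact side of $N_i$.

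For uniqueness, if $\ell\neq i$ then $P_\ell$ lies in the closed inward half-plane of $L$, so the origin of chart $A_\ell$ is not strictly outward of $L_\ell$. When $P_\ell$ is strictly inward, the origin and $\Delta(\mathcal{F}_\ell)$ lie on the same side of the supporting line of $L_\ell$, hence $L_\ell$ is on the part of $\partial\Delta(\mathcal{F}_\ell)$ facing away from the origin and is not a compact side of $N_\ell$. When $P_\ell$ lies on the supporting line, the line through $L_\ell$ passes through the origin; any line through the origin that contains a nontrivial segment in the open first quadrant has non-negative slope, so $L_\ell$ fails the negative-slope requirement for compactness. The main obstacle I anticipate is the slope computation in the third paragraph: translating the geometric outwardness of $P_i$ into the inequality $v_jv_k<0$ is the one place where the no-common-factor hypothesis on $(A_0,A_1,A_2)$ is used in a critical quantitative way, and without the lower bounds on $\alpha_j,\alpha_k$ one could not exclude horizontal or positive-slope projections.
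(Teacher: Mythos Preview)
Your proof is correct, and it rests on the same geometric insight as the paper's: the distinguished chart is the one whose vertex $P_i$ of the triangle $d_{\mathcal{F}}\Sigma^0_3$ lies on the \emph{outward} side of the supporting line of $L$. The execution, however, is genuinely different. The paper encodes this via the cones $\Gamma_L^i=\{\alpha P_i+(1-\alpha)\sigma:\alpha\in[0,1],\ \sigma\in L\}$ and the terse criterion ``$L_i$ is a compact side of $N_i$ if and only if $\Gamma_L^i\cap\Delta_h(\mathcal{F})=L$ and $\text{Ar}(\Gamma_L^i)\neq 0$'', then invokes convexity of $\Delta_h(\mathcal{F})$ to get existence and uniqueness in one stroke. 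You instead unfold the picture analytically: you isolate the outward vertex by a half-plane counting argument using $\Delta_h(\mathcal{F})\cap\{\sigma_\ell=0\}\neq\emptyset$, and then verify the negative-slope condition for $L_i$ through an explicit linear-functional computation (the elimination giving $(\alpha_j-\alpha_i)v_j+(\alpha_k-\alpha_i)v_k=0$). Your route is longer but makes the slope and separation conditions fully explicit and pinpoints exactly where the no-common-factor hypothesis on $(A_0,A_1,A_2)$ is used; the paper's argument is much shorter but leaves the key equivalence to the reader's geometric intuition.
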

\begin{proof}
	Consider the triangle given by $\Gamma_L^0= \{ \alpha(d,0,0)+(1-\alpha)\sigma; \; 0\leq \alpha \leq 1, \, \sigma \in L\}$. In the same way, define $\Gamma_L^1$ and $\Gamma_L^2$. We have that $\Gamma_L^i \cap  \Delta_h(\mathcal{F})=L$ and $\text{Ar}(\Gamma_L^i)\ne 0$ if and only if $L_i$ is a compact side of $N_i$. On the other hand, there is a unique $\ell \in \{0,1,2\}$ such that $\Gamma_L^{\ell} \cap \Delta_h(\mathcal{F})=L$ and $\text{Ar}(\Gamma_L^{\ell})\ne 0$, because of the convexity of $\Delta_h(\mathcal{F})$.
\end{proof}

\subsubsection{Proof of Proposition \ref{prop:area0}}
Let $\mathcal{F}$ be a foliation of the projective plane $\mathbb{P}_{\mathbb{C}}^2$ and let $W=A_0dX_0/X_0+A_1dX_1/X_1+A_2dX_2/X_2$ be a homogeneous generator of $\mathcal{F}$. Denote $d=d_{\mathcal{F}}$. Let us consider a side $L$ of the homogeneous polygon $\Delta_h(\mathcal{F})$. We give a description of the support-restrictions $A_{jL}$ of $A_j$ to $L$ as follows.

\begin{enumerate}
	\item When $L\subset \{\sigma\in d_{\mathcal{F}}\Sigma_3^0; \; \sigma_i=k\}$ for some $i \in \{0,1,2\}$ and $k\in \mathbb{Z}_{\geq 0}$. We detail the case $i=0$, the other ones are done in a similar way. The $A_{jL}$ are homogeneous polynomials of degree $d$ of the form $A_{jL}=X_0^k\tilde{A}_{jL}$, with $\tilde{A}_{jL}\in \mathbb{C}[X_1,X_2]$, for $j=0,1,2$. That is
	$$
	\tilde{A}_{jL}=X_1^{\sigma_1^j}X_2^{\sigma_2^j}\prod\nolimits_{\ell=1}^{d_j}(X_2-\alpha_{\ell j}X_1), \quad \sigma^j_1+\sigma^j_2+d_j=d-k.
	$$
	\item Otherwise, up to reordering of the variables, there is a monomial $X^{\sigma}$ such that for each $j=0,1,2$, we have $A_{jL}=X^{\sigma}\tilde{A}_{jL}$, where the $\tilde{A}_{jL} \in \mathbb{C}[U,V]$ are homogeneous polynomials of degree $n$ with $U=X_0^{\tilde{d}}$, $V=X_1^{\tilde{d}-\tilde{a}}X_2^{\tilde{a}}$, $\tilde{d}n=d-|\sigma|$ and $0 <\tilde{a}<\tilde{d}$. That is,
	$$
	\tilde{A}_{jL}=(X_0^{\tilde{d}})^{u_j}(X_1^{\tilde{d}-\tilde{a}}X_2^{\tilde{a}})^{v_j}\prod\nolimits_{\ell=1}^{n_j}(X_1^{\tilde{d}-\tilde{a}}X_2^{\tilde{a}}-\alpha_{\ell j}X_0^{\tilde{d}}); \quad u_j+v_j+n_j=n.
	$$
\end{enumerate}
\begin{rem} \label{rem:UV}
	The cases above can be considered in a unified way by writing $U=X_1,V=X_2$ when we are in the first one.
\end{rem}

\begin{lema} \label{lema:bastamirarenunacarta}
	Let $L$ be a side of $\Delta_h(\mathcal{F})$ and let $L_i$ be the $i$-chart of $L$. The following assertions are equivalent:
	\begin{enumerate}
		\item There is an index $i\in \{0,1,2\}$ such that $(\mathcal{F}^i,{L_i})$ is non-degenerate.
		\item For any index $i\in \{0,1,2\}$, we have that $(\mathcal{F}^i,{L_i})$ is non-degenerate. \qedhere
	\end{enumerate}
\end{lema}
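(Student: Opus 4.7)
The plan is to combine the homogeneous relation $A_0+A_1+A_2=0$ with Remarks \ref{rem:cambiomonomial} and \ref{rem:suma}: the first transports non-degeneracy across charts via an explicit monomial change of coordinates, while the second lets us replace any coefficient by a sum of the other two within a single chart.

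First I would rewrite, in each affine chart $A_i$, the adapted generator $\eta_i = a_j^i\,dx_j^i/x_j^i + a_k^i\,dx_k^i/x_k^i$ with $\{i,j,k\}=\{0,1,2\}$, and note that the three dehomogenizations satisfy $a_0^i + a_1^i + a_2^i = 0$, so the same relation holds for their support-restrictions to $L_i$. Since these restrictions are quasi-homogeneous of common degree (they lie on a single side of $\Delta(\mathcal{F}_i)$), two applications of Remark \ref{rem:suma} show that non-degeneracy of the adapted pair $(a_{jL_i}^i, a_{kL_i}^i)$ is equivalent to non-degeneracy of any other pair chosen from $\{a_{0L_i}^i, a_{1L_i}^i, a_{2L_i}^i\}$. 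Thus within a single chart the property depends only on the intrinsic triple of side-restrictions.

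Next I would carry out the change of chart. For $i\neq i'$, letting $t$ be the remaining index, the homogeneity of $A_j$ of degree $d=d_\mathcal{F}$ gives by direct substitution
$$
a_j^{i'}(x_i^{i'},x_t^{i'}) \;=\; (x_i^{i'})^d\, a_j^i\bigl((x_i^{i'})^{-1},\; x_t^{i'}(x_i^{i'})^{-1}\bigr).
$$
This is precisely the monomial transformation $u^c F(u^a,u^b)$ of Remark \ref{rem:cambiomonomial}, with unimodular exponent matrix (determinant $-1$). The same transformation sends the side $L_i$ of $\Delta(\mathcal{F}_i)$ onto $L_{i'}$ of $\Delta(\mathcal{F}_{i'})$, because both are affine projections of the single homogeneous side $L$ of $\Delta_h(\mathcal{F})$; in particular the pair of side-restrictions on $L_i$ is carried onto the pair of side-restrictions on $L_{i'}$ (indexed by the same two subscripts). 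Parts (1), (2) and (3) of Remark \ref{rem:cambiomonomial} then give that non-degeneracy is preserved. Combining this with the intra-chart interchangeability of the previous step, one picks in chart $A_i$ the pair indexed by $\{i,t\}$, transports it to the adapted pair $(a_{iL_{i'}}^{i'}, a_{tL_{i'}}^{i'})$ of chart $A_{i'}$, and concludes the implication (1)$\Rightarrow$(2); the converse is trivial.

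The main obstacle will be the bookkeeping: tracking which two coefficients form the adapted pair in each chart, and checking for every unordered pair $(i,i')$ that the change of chart truly fits the framework of Remark \ref{rem:cambiomonomial}. Remark \ref{rem:relacionordenes} absorbs the residual ambiguity when the monomial substitution permutes the roles of the two chart variables, since it shows that non-degeneracy is insensitive to the total ordering chosen on the two components of the divisor at the corner.
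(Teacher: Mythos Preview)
Your argument is correct, but the paper proceeds more directly. Instead of splitting into an intra-chart step (Remark \ref{rem:suma}) and an inter-chart step (Remark \ref{rem:cambiomonomial}), the paper lifts everything to the homogeneous level and argues the contrapositive: if $(a_{jL_i}^i,a_{kL_i}^i)$ is degenerate, the common non-monomial factor comes from a factor $V-\alpha U$ of the homogeneous restrictions $A_{jL},A_{kL}$ (using the description in Remark \ref{rem:UV}); the relation $A_{0L}+A_{1L}+A_{2L}=0$ forces $V-\alpha U$ to divide $A_{iL}$ as well, so every pair in every chart is degenerate. This avoids the monomial change of variables and the associated bookkeeping entirely.

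What your route buys is modularity: you only invoke previously stated remarks and never unpack the factorization. What it costs is the bookkeeping you yourself flag, plus an unmentioned edge case: Remark \ref{rem:suma} presupposes both entries are nonzero quasi-homogeneous polynomials, so when you pass from the adapted pair to a pair involving $a_{iL_i}^i$ you implicitly need $a_{iL_i}^i\neq 0$. This is harmless (if $a_{iL_i}^i=0$ then $a_{jL_i}^i=-a_{kL_i}^i$ is non-monomial since both endpoints of $L_i$ lie in its support, so the adapted pair is already degenerate), but it should be said. The paper's homogeneous argument sidesteps this entirely.
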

\begin{proof}
	Take notations as in Remark \ref{rem:UV}. Consider $\{i,j,k\}=\{0,1,2\}$ and suppose that $(a_{jL_i}^i, a_{kL_i}^i)$ is degenerate, that is, they have a non-monomial common factor. Since
	$$
	a_{\ell L_j}^i=A_{\ell L}(X_0/X_i,X_1/X_i,X_2/X_i), \; \ell=j,k,
	$$
	the factor comes from a common factor $V-\alpha U$ of $A_{jL}, A_{kL}$. As a consequence $V-\alpha U$ also divides $A_{iL}$, since $A_{0L}+A_{1L}+A_{2L}=0$. Then $(a_{iL_j}^j, a_{kL_j}^j)$ and $(a_{iL_k}^k, a_{jL_k}^k)$ are also degenerate.
\end{proof}
We say that $(\mathcal{F},L)$ is \emph{non-degenerate} when the equivalent conditions of Lemma \ref{lema:bastamirarenunacarta} hold.
\begin{lema} \label{lema:generalposition}
	If $\mathcal{F}$ is a CH Newton non-degenerate foliation on $\mathbb{P}_{\mathbb{C}}^2$, then each affine $i$-chart $\mathcal{F}_i$ is in general position, for $i=0,1,2$.
\end{lema}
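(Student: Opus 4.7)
The strategy is to show non-degeneracy of $(\mathcal{F}_i, L_i)$ for every side $L_i$ of $\Delta(\mathcal{F}_i)$. The projection $\Phi_i^{d_{\mathcal{F}}}$ restricts to a linear bijection from $\Delta_h(\mathcal{F})$ onto $\Delta(\mathcal{F}_i)$, so each such $L_i$ is the $i$-chart of a unique side $L$ of $\Delta_h(\mathcal{F})$, and by Lemma \ref{lema:bastamirarenunacarta} it suffices to find one chart $A_m$ in which $(\mathcal{F}_m, L_m)$ is non-degenerate. The argument then splits according to whether $L$ lies on the boundary of the simplex $d_{\mathcal{F}}\Sigma_3^0$.

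If $L \not\subset \{\sigma_\ell = 0\}$ for any $\ell$, Lemma \ref{lema:poligonos} provides a unique chart $A_m$ in which $L_m$ is a compact side of the Newton polygon $N_m$ at the corner $O_m$; since $e_{O_m}(E) = 2$, the Newton non-degeneracy hypothesis at $O_m$ yields non-degeneracy of $(\mathcal{F}_m, L_m)$ immediately.

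The delicate case is $L \subset \{\sigma_\ell = 0\}$; by symmetry take $\ell = 0$. Such $L$ is never a compact side of any $N_m$, so I would instead exploit the non-corner points $p = [0, 1, \beta]$, with $\beta \in \mathbb{C}^*$, of the divisor component $D_0 = \{X_0 = 0\}$. At each such $p$ we have $e_p(E) = 1$, so the Newton non-degeneracy hypothesis plus Lemma \ref{lema:presimplesCH} forces $\nu_p(\mathcal{F}, E) = 0$. A direct computation in chart $A_1$ translates this into the statement that $A_0(0, 1, \beta)$ and $A_2(0, 1, \beta)$ do not both vanish, so by $A_0 + A_1 + A_2 = 0$ the three polynomials $A_j(0, X_1, X_2)$ share no common root $[1, \beta]$ with $\beta \ne 0$, equivalently no common linear factor $X_2 - \beta X_1$ with $\beta \ne 0$.

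To finish I would note that, as a face of $\Delta_h(\mathcal{F})$, $L = \Delta_h(\mathcal{F}) \cap \{\sigma_0 = 0\}$ contains the support of each $A_j(0, X_1, X_2)$, so $A_{jL} = A_j(0, X_1, X_2)$. The line carrying $L_0$ in the $0$-chart has weight vector $(1, 1)$, and by the book-keeping of Lemma \ref{lema:bastamirarenunacarta} the relation $A_0 + A_1 + A_2 = 0$ converts any common factor $X_2 - \beta X_1$ of $(A_{1L}, A_{2L})$ into a common factor of all three $A_{jL}$; the previous step shows no such factor exists, so $(\mathcal{F}_0, L_0)$ is non-degenerate. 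The main obstacle I anticipate is precisely this boundary case: one must realise that non-degeneracy only forbids the factors $X_2 - \beta X_1$ with $\beta \ne 0$ (the monomial factors $X_1, X_2$ correspond to the corner points $O_1, O_2$, where Newton non-degeneracy does not deliver presimpleness directly), and align this combinatorial condition on $L$ with the affine presimpleness criterion on $D_0 \setminus \{O_1, O_2\}$.
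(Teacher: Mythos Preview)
Your proof is correct and follows essentially the same approach as the paper's: split sides $L$ of $\Delta_h(\mathcal{F})$ according to whether $L$ lies on a coordinate hyperplane $\{\sigma_\ell=0\}$, handle the interior case via Lemma~\ref{lema:poligonos} and the Newton non-degeneracy at the corner $O_m$, and handle the boundary case by showing that a common non-monomial factor would force a non-presimple point on the divisor line $X_\ell=0$ away from the corners. The only cosmetic difference is that the paper reaches the contradiction by invoking Theorem~\ref{teo:equivalencia} (weak toric type) rather than, as you do, directly citing the definition of Newton non-degenerate at points with $e_p(E)=1$ together with Lemma~\ref{lema:presimplesCH}.
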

\begin{proof}
	In view of Lemma \ref{lema:bastamirarenunacarta}, it is enough to prove that given a side $L$ of $\Delta_h(\mathcal{F})$, there is an index $i \in \{0,1,2\}$ such that $(\mathcal{F}_i,L_i)$ is non-degenerate. Let us do it.
	
	If $L\subset \{\sigma \in d\Sigma_3^0; \; \sigma_1 =0\}$, then $L_0 \subset \{(\tau_1,\tau_2) \in \Sigma_2 ; \; \tau_1 =0\}$ and $a^0_{1L_0},a^0_{2L_0}$ are polynomials in the single variable $x_2^0$. Moreover, we can write
	$$
	a^0_1=a^0_{1L_0}+x_1^0\tilde{a}^0_1, \; a^0_2=a^0_{2L_0}+x_1^0\tilde{a}^0_2.
	$$
	If $(a^0_{1L_0},a^0_{2L_0})$ is degenerate, then there is $\lambda \in \mathbb{C}^*$ with $a^0_{1L_0}(\lambda)=a^0_{2L_0}(\lambda)=0$. This implies that the point $[1,0,\lambda] \in A_0$ is non-presimple for $\mathcal{F}$. This is impossible, since $\mathcal{F}$ is of weak toric type, by Theorem \ref{teo:equivalencia}. Then $(\mathcal{F}_0,{L_0})$ is non-degenerate. We reason in a similar way when $L\subset \{\sigma \in d\Sigma_3^0; \; \sigma_{\ell} =0\}$, for $\ell=0,2$.
	
	Otherwise, by Lemma \ref{lema:poligonos}, there is a unique $i \in \{0,1,2\}$ such that that $L_i$ is a compact side of $N^{\prec_i}_{O_i}((A_i, E_{A_i}),\mathcal{F}_i)$. Then, by definition, we have that $(\mathcal{F}_i,{L_i})$ is non-degenerate.
\end{proof}

\emph{Proof of Proposition \ref{prop:area0}}.
Let us work in the $0$-chart $\mathcal{F}_0$. In view of Lemma \ref{lema:generalposition}, we have that $\mathcal{F}_0$ is in general position. As a consequence, given a side $L_0$ of $\Delta(\mathcal{F}_0)=\Delta(a_1^0,a_2^0)$, we have that $(\mathcal{F}_0,{L_0})$ is non-degenerate and $\Delta(a_1^0)\cap L_0 \neq \emptyset$, $\Delta(a_2^0)\cap L_0 \neq \emptyset$, by Remark \ref{rem:una de las restricciones es cero}. The fact of being  $\mathcal{F}$ Newton non-degenerate also implies that $(a_1^0=a_2^0=0)\cap (\mathbb{C}^*)^2 = \emptyset.$
Applying Theorem \ref{teo:KKB} we conclude that $\text{MA}(\Delta(a_1^0),\Delta(a_2^0))=0$.
Now, by Remark \ref{rem:areamixta0}, we have two options, up to reordering, for $\Delta(a_1^0)$ and $\Delta(a_2^0)$:
\begin{enumerate}
	\item The affine polygon $\Delta(a_1^0)$ is a single point $\sigma$. In this case $\sigma$ belongs to each side of $\Delta(\mathcal{F}_0)$. This implies that $\Delta(\mathcal{F}_0)$ is a single point or a segment.
	\item The affine polygons $\Delta(a_1^0)$ and $\Delta(a_2^0)$ are parallel segments $L_1=\Delta(a_1^0)$, $L_2=\Delta(a_2^0)$. Recall that $\Delta(\mathcal{F}_0)$ is the convex hull of $L_1\cup L_2$. If $L_1 \cap L_2 \ne \emptyset$, then $\Delta(\mathcal{F}_0)$ is a segment. If $L_1 \cap L_2 = \emptyset$, then $\Delta(\mathcal{F}_0)$ has four sides where $L_1$ and $L_2$ are two of them. This contradicts the fact that $L_1=\Delta(a_1^0)$ intersects $L_2$.
\end{enumerate}
Thus, we have that $\Delta(\mathcal{F}_0)$ is a segment or a single point and the same happens with $\Delta_h(\mathcal{F})$.
\hfill{$\blacksquare$}

\section{Isolated Invariant Curves}
The main goal of this section is to prove that the isolated invariant branches of Newton non-degenerate foliations on projective toric ambient surfaces have a global nature. We also give local and global results about the existence of isolated invariant branches in the weak toric type and in the toric type contexts.

\subsection{Global nature of isolated invariant branches} \label{subsec:Global Nature of Isolated Invariant Branches}
The objective of this subsection is to prove the following result:
\begin{teo} \label{teo:prolongacion} 
	The isolated invariant branches of a CH-Newton non-degenerate foliation on a projective toric surface extend to projective algebraic curves.
\end{teo}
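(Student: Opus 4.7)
The plan is to reduce to the case $S=\mathbb{P}^2_{\mathbb{C}}$, combine the zero-area constraint of Proposition \ref{prop:area0} with a local-to-global Newton polygon analysis, and in each admissible normal form of $\Delta_h(\mathcal{F})$ exhibit an explicit finite family of projective curves that contains every isolated invariant branch.

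First I would reduce to the projective plane. By Corollary \ref{cor:birracional}, any nonsingular projective toric surface $S$ is linked to $\mathbb{P}^2_{\mathbb{C}}$ by a finite chain of combinatorial blowing-ups and blowing-downs. The property of being CH and Newton non-degenerate is preserved by each such step in view of Corollary \ref{cor:estabilidad}, while the property of being an isolated invariant branch is preserved by blowing-ups (by the definition) and by combinatorial blowing-downs (by Proposition \ref{prop:estabilidadaisladas}). Travelling an isolated branch $(\Gamma,p)$ of $S$ along the chain produces an isolated branch $(\Gamma_0,p_0)$ on $\mathbb{P}^2_{\mathbb{C}}$ for the transformed foliation; since strict transforms of projective algebraic curves through combinatorial maps remain projective algebraic, it suffices to prove the statement on $\mathbb{P}^2_{\mathbb{C}}$.

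Next, by Theorem \ref{teo:equivalencia} the hypothesis is equivalent to $\mathcal{F}$ being CH of weak toric type, so by Proposition \ref{prop:area0} the homogeneous polygon $\Delta_h(\mathcal{F})$ is a single point or a segment. Up to the $S_3$-action permuting $X_0,X_1,X_2$, I am in one of the three normal forms (a), (b), (c) of the introduction. In case (a) the coefficients $A_i$ are monomials and $\mathcal{F}$ is projectively equivalent to the purely logarithmic foliation $c_0\,dX_0/X_0+c_1\,dX_1/X_1+c_2\,dX_2/X_2$ with $c_0+c_1+c_2=0$; its combinatorial reduction of singularities over the three corners produces only corner-type simple points, so by Lemma \ref{lema:aisladaspresimple} there is no isolated invariant branch and the statement is vacuous.

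In case (b), the $A_i$ are homogeneous polynomials of degree $d$ in $X_1,X_2$ with no common factor. Working in the chart $A_0$, the homogeneity of $a_1^0,a_2^0$ together with $a_0^0+a_1^0+a_2^0=0$ and the no-common-factor hypothesis confines $\mathrm{Sing}(\mathcal{F})\cap A_0$ to the corner $O_0$; the analogous analysis at $O_1,O_2$, combined with Remark \ref{rem:presimpleunsolovertice}, shows that those corners are presimple and hence carry no isolated branch by Lemma \ref{lema:aisladaspresimple}. After the single combinatorial blow-up at $O_0$, the new exceptional divisor acquires only finitely many trace-type simple singularities, located precisely at the directions $x_2/x_1=\lambda$ where $a_0^0(1,\lambda)=0$; through each of them Lemma \ref{lema:aisladaspresimple} and Remark \ref{rem:simpleaislada} provide a unique isolated branch, namely the strict transform of the projective line $\ell_\lambda=(X_2-\lambda X_1=0)$. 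Hence every isolated invariant branch of $\mathcal{F}$ is a germ of one of these lines. Case (c) is treated in parallel, using the monomial/toric substitution $U=X_0^{\tilde d}$, $V=X_1^{\tilde d-\tilde a}X_2^{\tilde a}$, which turns the segment $(d,0,0)$--$(0,a,d-a)$ into a vertex-to-vertex segment for an auxiliary foliation and sends the cuspidal family $\mathcal{C}_\lambda=(V-\lambda U=0)$ into the role played by the $\ell_\lambda$ in case (b). The step I expect to be hardest is precisely this passage: one must verify that the toric substitution preserves Newton non-degeneracy and the notion of isolated branch, so that the local classification at $O_0$ transfers back to $\mathbb{P}^2_{\mathbb{C}}$ and identifies the global family $\mathcal{C}_\lambda$ of projective algebraic curves.
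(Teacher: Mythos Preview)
Your global strategy coincides with the paper's: reduce to $\mathbb{P}_{\mathbb{C}}^2$ via Corollary~\ref{cor:birracional} and Propositions~\ref{prop:estabilidadNND},~\ref{prop:estabilidadaisladas}, invoke Proposition~\ref{prop:area0} to split into cases (a), (b), (c), and in each case show that every isolated invariant branch is the germ of an explicit projective curve. Case (a) is fine.

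In case (b) there is an oversight. You inspect $\text{Sing}(\mathcal{F},E)\cap A_0=\{O_0\}$ and the corners $O_1,O_2$, but the invariant line $X_0=0$ carries further trace-type presimple singularities $P_\lambda=[0,1,\lambda]$ for $\lambda\in\Lambda_{\mathcal{F}}=\{A_0(1,\lambda)=0\}$; these lie outside the chart $A_0$ and are not corners, so they are not covered by your argument. One must also argue, via Lemma~\ref{lema:aisladaspresimple}, that the only possible isolated branch at $P_\lambda$ is $(\ell_\lambda,P_\lambda)$. A smaller imprecision: after the blow-up of $O_0$ the points $p'_\lambda$ are only known to be \emph{presimple} (weak toric type gives pre-reduction, not reduction), so you should invoke Lemma~\ref{lema:aisladaspresimple} and its Corollary rather than Remark~\ref{rem:simpleaislada}; fortunately only the uniqueness part is needed here.

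In case (c) your route departs from the paper's and runs into exactly the obstacle you anticipate. The substitution $U=X_0^{\tilde d}$, $V=X_1^{\tilde d-\tilde a}X_2^{\tilde a}$ is not unimodular; completed to a rational map $\varphi:\mathbb{P}_{\mathbb{C}}^2\dashrightarrow\mathbb{P}_{\mathbb{C}}^2$ it is a ramified finite cover, undefined precisely at $O_2$ (one of the two points you must analyse), and there is no a priori reason for it to preserve the property of being an isolated branch. The paper does use such a $\varphi$ later, but only to transport \emph{global} invariant curves in the proof of Theorem~\ref{teo:dicotomia}, not local isolated branches. For Theorem~\ref{teo:prolongacion} the paper sidesteps the issue: at each of $O_1,O_2$ it performs the \emph{combinatorial} sequence of blow-ups giving the minimal resolution of the cusp $(x_2^1)^{\tilde a}-(x_0^1)^{\tilde d}=0$, which is a genuinely unimodular monomial map $x_0^1=u^\alpha v^\beta$, $x_2^1=u^\gamma v^\delta$ with $\alpha\delta-\beta\gamma=1$. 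After this the strict transforms of all the $\mathcal{C}_\lambda$ become lines $v=\lambda u$ through a single point $q$ whose Newton polygon has one compact side of slope $-1$, every other point of $\text{Sing}(\mathcal{F}',E')$ over $O_i$ is a presimple corner, and the analysis reduces verbatim to the computation of case~(b). Since these are honest combinatorial blow-ups, Proposition~\ref{prop:estabilidadaisladas} applies directly and no separate transfer lemma for isolated branches is required.
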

Since being Newton non-degenerate is equivalent to being of weak toric type in the complex hyperbolic frame, the previous result can be stated as follows:
\begin{quote}
	\emph{``The isolated invariant branches of a CH-weak toric type foliation on a projective toric surface extend to projective algebraic curves''.}
\end{quote}
Let $\mathcal{F}$ be a CH-Newton non-degenerate foliation on a projective toric surface $S$. In view of Theorem \ref{Teo:Oda}, there is a finite sequence of combinatorial blowing-ups and blowing-downs $\mathbb{P}_{\mathbb{C}}^2 \to S$. The transform $\mathcal{F}'$ of the foliation $\mathcal{F}$ by this sequence is a Newton non-degenerate foliation on $\mathbb{P}_{\mathbb{C}}^2$, by the stability property stated in Proposition \ref{prop:estabilidadNND}. If we prove that all the isolated invariant branches of $\mathcal{F}'$ extend to projective algebraic curves, we have proved also that the property holds for $\mathcal{F}$, because of the stability property of the isolated invariant branches stated in Proposition \ref{prop:estabilidadaisladas}.

As a consequence, it is enough to prove the result when $\mathcal{F}$ is defined on $\mathbb{P}^2_\mathbb{C}$. Recall that we have, up to reordering, three cases for the homogeneous polygon $\Delta_h(\mathcal{F})$:

\begin{enumerate}
	\item [a)] It is a single point.
	\item [b)] It is the segment joining the points $(0,d,0)$ and $(0,0,d)$, with $d > 0$.
	\item [c)] It is the segment joining the points $(d,0,0)$ and $(0,a,d-a)$, with $0 <a <d$.
\end{enumerate}
\paragraph{Case a).} There are no isolated invariant branches. Let us see it. A homogeneous generator $W$ is given by
$$
W=\lambda_0dX_0/X_0+\lambda_1dX_1/X_1+\lambda_2dX_2/X_2, \quad \lambda_0+\lambda_1+\lambda_2=0, \quad \lambda_i \in \mathbb{C},\; i=0,1,2.
$$
If $\lambda_0=0$, we have Sing$(\mathcal{F},E)=\text{Sing}(\mathcal{F})=\{O_0\}$ and $O_0$ is a corner presimple singularity without isolated invariant branches through it.

If $\lambda_0\lambda_1\lambda_2 \ne 0$, we have Sing$(\mathcal{F},E)=\text{Sing}(\mathcal{F})=\{O_0,O_1,O_2\}$. All the singularities are presimple corners, so there are no isolated invariant branches through them.

\paragraph{Case b).} The only isolated invariant branches are contained in a finite family of lines through $O_0$. Let us prove it. A homogeneous generator $W$ of $\mathcal{F}$ is given by
$$
W=A_0(X_1,X_2)dX_0/X_0+A_1(X_1,X_2)dX_1/X_1+A_2(X_1,X_2)dX_2/X_2, \quad A_0+A_1+A_2=0,
$$
where the $A_i$ are homogeneous of degree $d$. Let us consider the set $\mathcal{P}_{\Lambda_{\mathcal{F}}}= \{[0,1,\lambda] \in \mathbb{P}^2_\mathbb{C} ; \; \lambda\in \Lambda_{\mathcal{F}}\}$, where $\Lambda_{\mathcal{F}}=\{\lambda \in \mathbb{C}^*; \; A_0(1,\lambda)=0\}$. Note that:
\begin{enumerate}
	\item $X_0=0$ is invariant.
	\item $\mathcal{P}_{\Lambda_{\mathcal{F}}} \cup \{O_0\} \subset \text{Sing}(\mathcal{F},E) \subset \mathcal{P}_{\Lambda_{\mathcal{F}}} \cup \{O_0,O_1,O_2\}.$
	\item $\text{Sing}(\mathcal{F},E) \setminus \text{Sing}(\mathcal{F}) \subset \{O_0\}.$
\end{enumerate}

The point $O_i \in \text{Sing}(\mathcal{F})$ if and only if $X_j=0$ is invariant, for $\{i,j\}=\{1,2\}$. In this case, it is a corner type presimple point and there are no isolated invariant branches through it.

A point $P_\lambda \in \mathcal{P}_{\Lambda_{\mathcal{F}}}$ is a trace type presimple singularity. The germ at $P_{\lambda}$ of the line $\ell_\lambda=( X_2-\lambda X_1=0)$ is an invariant branch. By Lemma \ref{lema:aisladaspresimple}, there are no isolated branches through $P_\lambda$ different from $(\ell_\lambda, P_\lambda)$.

The point $O_0$ is non-presimple and it belongs to the lines $\ell_\lambda=( X_2-\lambda X_1=0)$. Let us prove that the isolated invariant branches at $O_0$ are among the germs $(\ell_\lambda,O_0)$, with $\lambda\in \Lambda_{\mathcal{F}}$. Let us work in the affine $0$-chart. A generator $\eta_0$ of $\mathcal{F}_0$ is given by
$$
\eta_0=A_1(x_1^0,x_2^0)dx_1^0/x_1^0+A_2(x_1^0,x_2^0)dx_2^0/x_2^0.
$$
The blowing-up at $O_0$ is determined, in the first chart, by the equations $x_1^0=u$, $x_2^0=uv$. Denote by $p_{0}$ the point $u=v=0$. A local generator at $p_{0}$ of the transform $\mathcal{F}'$ of $\mathcal{F}$ is $(A_1+A_2)(1,v)du/u+A_2(1,v)dv/v$. Note that $\{\lambda\in \mathbb{C}^* ; \; (A_1+A_2)(1,\lambda)=0\}=\Lambda_{\mathcal{F}}$, since $A_1+A_2=-A_0$. The points $p'_\lambda=(u=0,v=\lambda)$,
with $\lambda\in \Lambda_{\mathcal{F}}$ are trace type presimple singularities, since $\mathcal{F}$ is of weak toric type. The strict transform of the branches $(\ell_\lambda,O_0)$ with $\lambda \in \Lambda$ are the invariant branches $(\ell'_\lambda,p'_\lambda)$, where $\ell'_\lambda=(y_2-\lambda=0)$. On the other hand, we have that $p_{0}$ is a corner type presimple point, because of Remark \ref{rem:presimpleunsolovertice}. Analogously, the origin of the second chart $p_{\infty}$ is also a corner type presimple point. 
Hence, after blowing-up, the isolated invariant branches are among $(\ell'_\lambda,p'_\lambda)$ with $\lambda\in \Lambda_{\mathcal{F}}$. By the stability property established in Proposition \ref{prop:estabilidadaisladas}, there are no isolated invariant branches at $O_0$ different from $(\ell_\lambda,O_0)$ with $\lambda \in \Lambda_{\mathcal{F}}$.

\paragraph{Case c).} Let us prove that the only isolated invariant branches are contained in a finite family of curves of the type $X_1^{\tilde{d}-\tilde{a}}X_2^{\tilde{a}}-\lambda X_0^{\tilde{d}}=0$. Note that, they are locally cusps at the points $O_1$ and $O_2$. A homogeneous generator of $\mathcal{F}$ is given by
$$
W=A_0dX_0/X_0+A_1dX_1/X_1+A_2dX_2/X_2, \quad A_0+A_1+A_2=0, 
$$
where the $A_i$ belong to $\mathbb{C}[U,V]$, with $U=X_0^{\tilde{d}}$, $V=X_1^{\tilde{d}-\tilde{a}}X_2^{\tilde{a}}$, $d=\tilde{d}n$, $a=\tilde{a}n$ and $n= \text{gcd}(d,a)$. Note that:
\begin{enumerate}
	\item $X_1=0$, $X_2=0$ are not dicritical simultaneously.
	\item $\{O_1,O_2\} \subset \text{Sing}(\mathcal{F},E) \subset \{O_0,O_1,O_2\}$.
	\item $\text{Sing}(\mathcal{F},E) \setminus \text{Sing}(\mathcal{F}) \subset \{O_1,O_2\}.$
\end{enumerate}
The point $O_0 \in \text{Sing}(\mathcal{F})$ if and only if $X_1=0$, $X_2=0$ are both invariant. In this case, it is a corner type presimple singularity and there are no isolated invariant branches through it.

Let us consider the subset of $\mathbb{C}^*$ given by $\Lambda_{\mathcal{F}}=\{\lambda \in \mathbb{C}^*; \; (\tilde{a}A_0+\tilde{d}A_2)(1,\lambda)=0\}$ and the closed curves $\mathcal{C}_{\lambda}= (X_1^{\tilde{d}-\tilde{a}}X_2^{\tilde{a}}-\lambda X_0^{\tilde{d}}=0)$ with $\lambda \in \Lambda_{\mathcal{F}}$. Note that $O_1$, $O_2$ belong to $\mathcal{C}_\lambda$ and $(\mathcal{C}_{\lambda},O_1)$, $(\mathcal{C}_{\lambda},O_2)$ are invariant branches. Let us prove that the isolated invariant branches for $\mathcal{F}$ are among these ones. We do the proof at $O_1$ and a similar argument works at $O_2$. To do it, let us work in the affine $1$-chart. The point $O_1$ is a non-presimple point and a generator of $\mathcal{F}_1$ adapted to $D_1$ is given by
$$
\eta_1=A_0(x_0^1,1,x_2^1)dx_0^1/x_0^1+A_2(x_0^1,1,x_2^1)dx_2^1/x_2^1.
$$
Observe that $A_0(x_0^1,1,x_2^1)$ and $A_2(x_0^1,1,x_2^1)$ are quasi-homogeneous polynomials with weight vector $(\tilde{a},\tilde{d})$. Equivalently, the Newton polygon $N^{\prec_1}_{O_1}((A_1,D_1),\mathcal{F}_1)$ has exactly one compact side, whose slope is $-\tilde{a}/\tilde{d}$. In order to prove the result, we consider the composition
$$
((\tilde{A}_1,\tilde{D}_1), \tilde{\mathcal{F}}_1) \stackrel{\pi}\rightarrow (({A}'_1,{D}'_1), \mathcal{F}'_1) \stackrel{\sigma}\rightarrow ((A_1,D_1), \mathcal{F}_1)
$$
of the finite sequence of combinatorial blowing-ups that provides the minimal reduction of singularities of the cusp 
$
(x_2^1)^{\tilde{a}}-(x_0^1)^{\tilde{d}}=0,
$
where $\pi$ is the last blowing-up. Note that $e_{q}(E')=2$, where $q$ is the center of $\pi$. The morphism $\sigma$ is given in affine coordinates $(u,v)$ centered at $q$ by $x_0^1=u^\alpha v^\beta; \;x_2^1=u^\gamma v^\delta$, where $\alpha,\beta,\gamma,\delta$ is the only solution of the diophantine system
$$
\alpha+\beta=\tilde{a}; \; \gamma + \delta =\tilde{d},\quad \alpha\delta-\beta\gamma=1, \quad \alpha,\beta,\gamma,\delta \in \mathbb{Z}_{\geq 0}.
$$
A local generator $\eta_1'$ of $\mathcal{F}'_1$ adapted to $D_1'$ at $q$ is given by
$$
u^{\gamma a}v^{\beta d}\eta_1'=A_0(u^\alpha v^\beta,1,u^\gamma v^\delta)(\alpha du/u+ \beta dv/v)+A_2(u^\alpha v^\beta,1,u^\gamma v^\delta)(\gamma du/u+ \delta dv/v).
$$
We make the following remarks:
\begin{enumerate}
	\item The Newton polygon $N^{\prec'}_{q}((A'_1, D'_1), \mathcal{F}'_1)$ has exactly one compact side, which has slope $-1$.
	\item For each $s \in \text{Sing}(\mathcal{F}'_1,D'_1)$ we have $e_s(D'_1)=2$. This follows from Remark \ref{rem:ladopendiente-1} and the behaviour by blowing-up of the compact sides of the Newton polygon explained in Subsection \ref{subsec:poligonoexplosiones}.
	\item Each $s \in \text{Sing}(\mathcal{F}'_1,D'_1)\setminus \{q\}$ is a corner type presimple singularity, in view of Remark \ref{rem:presimpleunsolovertice}.
	\item The strict transform of the algebraic curve $\mathcal{C}_{\lambda}$ is the line $\ell_\lambda=(v-\lambda u=0)$.
\end{enumerate}

By Lemma \ref{lema:aisladaspresimple}, the strict transform of all the isolated invariant branches through $O_1$ passes through $q$. The, the problem is reduced to show that there are no isolated invariant branches at $q$ different from $(\ell_{\lambda},q)$ with $\lambda \in \Lambda_{\mathcal{F}}$. This is done by similar computations to the ones in case b).

\begin{rem}
	The set $\Lambda_\mathcal{F}$ is not empty. In fact, when $\Delta_h(\mathcal{F})$ is in case b) it has $d$ elements and when $\Delta_h(\mathcal{F})$ is in case c) it has $n$ elements. Let us prove the result for case b), the other case is done in a similar way, working after pre-reduction of singularities. We need to prove that $A_0$ has not multiple factors and also that $X_1$, $X_2$ do not divide $A_0$. Let $X_2-\lambda X_1$ be a multiple factor of $A_0$. A local generator of $\mathcal{F}$ adapted to $E_{\mathbb{P}_{\mathbb{C}}^2}$ at $P_{\lambda}$ is given by
	$$
	y^2\bar{A}_0(1,y+\lambda)dx_0^1/x_0^1+A_2(1,y+\lambda)dy, \quad A_2(1,\lambda)\ne 0, \quad A_0=(X_2-\lambda X_1)^2\bar{A}_0
	$$
	Hence $P_\lambda$ is a saddle-node, that is impossible since $\mathcal{F}$ is CH. Assume now that $A_0=X_1\bar{A_0}$. A local generator of $\mathcal{F}$ adapted to $E_{\mathbb{P}_{\mathbb{C}}^2}$ at $O_1$ is given by
	$$
	x_2^1\bar{A}_0(1,x_2^1)dx_0^1/x_0^1+A_2(1,x_2^1)dx_2^1/x_2^1, \quad A_2(1,0)\ne 0.
	$$
	We conclude that $O_1$ is a saddle-node, that can not hold. Analogously $X_2$ does not divide $A_0$.
\end{rem}

\subsection{Existence of isolated invariant branches}
We present a local result of existence of isolated invariant branches for toric type foliated surfaces. To do it, we need the following stability statement:
\begin{lema} \label{lema:noesquinapresimple}
	Let us consider a CH foliated surface $(\mathcal{M},\mathcal{F})$, a point $p\in \text{Sing}(\mathcal{F},E)$ and a combinatorial blowing-up $\pi:(\mathcal{M}',\mathcal{F}')\to (\mathcal{M},\mathcal{F})$. We have that $p$ is a presimple corner type point if and only if each point $p'\in \pi^{-1}(p) \cap \text{Sing}(\mathcal{F}',E')$ is presimple of corner type point.
\end{lema}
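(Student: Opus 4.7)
The plan is to split the equivalence into its two implications, using the Newton polygon analysis of Subsection \ref{subsec:poligonoexplosiones} together with Lemma \ref{lema:presimplesCH}. Throughout I will exploit that $\pi$ being combinatorial forces $e_p(E)=2$, so to prove ``presimple corner'' it is enough to establish presimpleness.

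For the direct implication I would invoke the case-by-case behaviour of presimple points under blowing-up summarised in Subsection \ref{sec:presimples}. If $p$ is a simple corner, case 2 produces two simple corner singularities on $D=\pi^{-1}(p)$. If $p$ is presimple but not simple, the CH hypothesis excludes case 3b (saddle-node), so only 3a or 3c can occur. Case 3c yields two corner singularities on $D$. In the dicritical case 3a the interior of $D$ carries no singularity of $\mathcal{F}'$; at the corners $q_0,q_\infty$ the strict transforms $E'_1,E'_2$ of the two components of $E$ through $p$ (both invariant, since $e_p(E_{\text{dic}})=0$) are the unique invariant branches, hence $\mathcal{F}'$ and $E'$ have normal crossings there and these points do not enter $\text{Sing}(\mathcal{F}',E')$. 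In every subcase, every point of $\pi^{-1}(p)\cap\text{Sing}(\mathcal{F}',E')$ is a presimple corner.

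For the converse I plan to argue by contradiction, ruling out compact sides of $N=N^{\prec}_p(\mathcal{M},\mathcal{F})$. If $N$ has a compact side of slope $-1$, Lemma \ref{lema:ladopendiente-1} supplies a point $q\in\mathcal{T}\cap\text{Sing}(\mathcal{F}',E')$, and $e_q(E')=1$ prevents $q$ from being a corner, contradicting the hypothesis. If $N$ has a compact side of slope $>-1$ (resp.\ $<-1$), its transform is a compact side of $N_0$ (resp.\ $N_\infty$), hence by Remark \ref{rem:presimpleunsolovertice} the corner $q_0$ (resp.\ $q_\infty$) is not presimple; Lemma \ref{lema:presimplesCH} applied in the CH setting then gives $\nu_{q_0}(\mathcal{F}',E')\geq 1$, forcing $q_0\in\text{Sing}(\mathcal{F}',E')$ and again contradicting the hypothesis. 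Thus $N$ has no compact sides, so $N=(i_0,j_0)+\mathbb{R}_{\geq 0}^2$ for some vertex $(i_0,j_0)$; the no-common-factor property of the coefficients $a_1,a_2$ of the adapted generator $\eta=a_1\,dx_1/x_1+a_2\,dx_2/x_2$ forces $(i_0,j_0)=(0,0)$, because otherwise one of $x_1,x_2$ would divide both $a_i$. Consequently $\nu_p(\mathcal{F},E)=0$, and a second application of Lemma \ref{lema:presimplesCH} shows that $p$ is presimple.

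The main delicate point is the passage, in the converse, from ``the Newton polygon at $q_0$ (or $q_\infty$) has a compact side'' to ``this corner belongs to $\text{Sing}(\mathcal{F}',E')$''. It relies on combining Remark \ref{rem:presimpleunsolovertice} (which reinterprets non-triviality of the Newton polygon as non-presimpleness) with Lemma \ref{lema:presimplesCH} (which, in the CH context, translates non-presimpleness into membership in the adapted singular locus). The remainder is bookkeeping of Newton polygon slopes under combinatorial blowing-up, already packaged in Subsection \ref{subsec:poligonoexplosiones}.
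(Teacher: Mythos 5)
Your argument is correct, and while the direct implication coincides with the paper's (both simply quote the blowing-up behaviour of presimple points from Subsection \ref{sec:presimples}, with the CH hypothesis ruling out the saddle-node case 3b and the radial case contributing nothing to $\text{Sing}(\mathcal{F}',E')$), your converse takes a genuinely different route. The paper argues numerically: it applies the Noether-type formula $\mu_p(\mathcal{F})-\nu_p(\mathcal{F})^2=S_p-(\nu_p(\mathcal{F})+1)\geq 0$, with $S_p=\sum_{p'\in\pi^{-1}(p)}\mu_{p'}(\mathcal{F}')$, computes $\mu_{p'}\in\{0,1\}$ at the points of $\pi^{-1}(p)$ from the presimple-corner hypothesis, and deduces $\nu_p(\mathcal{F},E)=0$, treating the dicritical case separately as the radial situation. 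You instead recycle the Newton-polygon calculus of Subsection \ref{subsec:poligonoexplosiones}: a compact side of slope $-1$ would create an adapted singular point on $\mathcal{T}$ with $e=1$ (Lemma \ref{lema:ladopendiente-1}), hence not a corner; a compact side of slope $\neq -1$ would transfer to a compact side at $q_0$ or $q_\infty$, making that corner non-presimple and therefore a non-presimple point of $\pi^{-1}(p)\cap\text{Sing}(\mathcal{F}',E')$; and the absence of compact sides together with the no-common-factor condition forces $N=\mathbb{R}_{\geq 0}^2$, i.e.\ presimpleness by Lemma \ref{lema:presimplesCH} (or directly by Remark \ref{rem:presimpleunsolovertice}). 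Your version is more self-contained within the paper, since it reuses machinery already established for Proposition \ref{prop:estabilidadNND} and avoids importing the Noether formula and Milnor numbers from \cite{Cam-N-S}; the paper's version is shorter and independent of the ordering/polygon formalism. Two cosmetic remarks: you should state explicitly the reduction to the case where $p$ is the centre of $\pi$ (otherwise $\pi$ is a local isomorphism near $p$ and the statement is trivial), and the detour through $\nu_{q_0}(\mathcal{F}',E')\geq 1$ is unnecessary, since a non-presimple point belongs to $\text{Sing}(\mathcal{F}',E')$ directly by the first clause of the definition of presimple.
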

\begin{proof}
	It is enough to do the proof when $p$ is the center of blowing-up. 
	
	In Subsection \ref{sec:presimples}, we have proved that each point $p'\in \pi^{-1}(p) \cap \text{Sing}(\mathcal{F}',E')=\pi^{-1}(p) \cap\text{Sing}(\mathcal{F}')$ is a presimple singularity of corner type when $p$ is a corner type presimple point. Now, we have to prove that $p$ is a corner type presimple point assuming that each $p'\in \pi^{-1}(p) \cap \text{Sing}(\mathcal{F}',E')$ is a presimple of corner type. We distinguish two cases:
	
	The blowing-up is non-dicritical. Given $p'\in \pi^{-1}(p)$, we have that 
	$$
	\nu_{p'}(\mathcal{F}')=\mu_{p'}(\mathcal{F}')=
	\left\{
	\begin{array}{cl}
	0 & \text{ if } e_{p'}(E'_{\text{inv}})=1  \\
	1 & \text{ if } e_{p'}(E'_{\text{inv}})=2 \quad (\text{recall that } \mathcal{F}' \text{ is CH}),
	\end{array}
	\right.
	$$
	where $\nu_{p'}(\mathcal{F}')$ denotes the algebraic multiplicity of $\mathcal{F}'$ at $p'$ and $\mu_{p'}(\mathcal{F}')$ denotes the Milnor number (see \cite{Can-C-D}). Consider the Noether type formula (see \cite{Cam-N-S})
	$$
	\mu_p(\mathcal{F}) - \nu_p(\mathcal{F})^2 = S_p - (\nu_p(\mathcal{F}) +1 ) \geq 0 , \quad S_p=\sum\nolimits_{p' \in \pi^{-1}(p)}\mu_{p'}(\mathcal{F}').
	$$
	Thus, we have $1 \leq S_p \leq 2$. If $S_p=1$, then $\nu_p(\mathcal{F})=0$ and $e_p(E_{\text{inv}})=1$; this means that $p$ is a regular point and $\mathcal{F},E$ have normal crossings at $p$. If $S_p=2$, then $\nu_p(\mathcal{F})\in \{0,1\}$ and $e_p(E_{\text{inv}})=2$; thus, necessarily $\nu_p(\mathcal{F})= 1$. Observe that we have $\nu_p(\mathcal{F},E)=\nu_p(\mathcal{F}) + 1-e_p(E_{\text{inv}})=1+1-2=0$, then $p$ is a corner type presimple singularity for $(\mathcal{M},\mathcal{F})$. 
	
	The blowing-up is dicritical. We necessarily have $\text{Sing}(\mathcal{F}',E')=\emptyset$, this means that $p$ is singular,  $e_p(E_{\text{inv}})=2$ and there is a germ of vector field tangent to $\mathcal{F}$ whose linear part is the identity up to a factor (radial case). Then, $p$ is a presimple corner type singularity.
\end{proof}
\begin{prop} \label{prop:existenciaaislada}
	Assume that $(\mathcal{M},\mathcal{F})$ is of toric type at a point $p\in \text{Sing}(\mathcal{F},E)$. If $p$ is not a presimple point of corner type, there is an isolated invariant branch $(\Gamma,p)$ through it.
\end{prop}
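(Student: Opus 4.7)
The plan is to descend an isolated invariant branch from a carefully chosen trace type simple singularity of the combinatorial reduction of $(\mathcal{M},\mathcal{F})_p$. Because $(\mathcal{M},\mathcal{F})$ is of toric type at $p$, there is a combinatorial reduction of singularities $\pi:(\mathcal{M}',\mathcal{F}') \to (\mathcal{M},\mathcal{F})_p$, expressed as a composition $\pi = \pi_1 \circ \pi_2 \circ \cdots \circ \pi_N$ of combinatorial blowing-ups, and every point of $\mathrm{Sing}(\mathcal{F}', E') \cap \pi^{-1}(p)$ is simple.

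The key step is the following iterated version of Lemma \ref{lema:noesquinapresimple}: whenever $\pi$ is a finite composition of combinatorial blowing-ups and $p$ is not a presimple corner, then $\pi^{-1}(p) \cap \mathrm{Sing}(\mathcal{F}',E')$ contains at least one point which is not a presimple corner. I would prove this by induction on the number of blowing-ups. Factoring $\pi = \sigma \circ \pi_N$, where $\sigma$ has length $N-1$, the induction hypothesis applied to $\sigma$ provides a non-presimple-corner singularity $q \in \sigma^{-1}(p) \cap \mathrm{Sing}(\mathcal{F}^{N-1}, E^{N-1})$. If $\pi_N$ is centered at $q$, Lemma \ref{lema:noesquinapresimple} applied to $\pi_N$ yields a non-presimple-corner singularity in $\pi_N^{-1}(q) \subset \pi^{-1}(p)$. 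If $\pi_N$ is centered elsewhere, then $\pi_N$ is a local isomorphism at the unique preimage $\tilde{q}$ of $q$, which remains a non-presimple-corner singularity in $\pi^{-1}(p)$ with the same adapted local type.

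Applying this iterated statement to the full reduction $\pi$, I obtain a point $p' \in \pi^{-1}(p) \cap \mathrm{Sing}(\mathcal{F}',E')$ that is not a presimple corner. Since $\pi$ is a complete reduction, $p'$ is simple, hence of trace type; as the framework is complex hyperbolic (consistent with the use of Lemma \ref{lema:noesquinapresimple}), $p'$ is not a saddle-node. Then Remark \ref{rem:simpleaislada} furnishes an isolated invariant branch $(\Gamma',p')$ for $(\mathcal{M}',\mathcal{F}')$. Iterating Proposition \ref{prop:estabilidadaisladas} along the combinatorial blowing-downs $\pi_N, \pi_{N-1}, \ldots, \pi_1$, the image of $(\Gamma',p')$ remains an isolated invariant branch at every stage, yielding the desired isolated invariant branch $(\Gamma,p)$ of $(\mathcal{M},\mathcal{F})$ through $p$.

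The principal obstacle is the bookkeeping in the inductive step: one must verify both that a non-presimple-corner singularity away from the next center survives the blowing-up with its adapted type intact (via the local isomorphism), and that when the center of the blowing-up coincides with the bad point, Lemma \ref{lema:noesquinapresimple} reliably produces a non-presimple-corner successor in the new exceptional divisor. Once this iterated transfer is established, the remainder of the argument is a direct combination of the known properties of simple singularities and the stability of isolated branches under combinatorial blowing-downs.
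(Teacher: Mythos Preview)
Your proposal is correct and follows essentially the same route as the paper: obtain a combinatorial reduction of singularities, use Lemma \ref{lema:noesquinapresimple} to guarantee a trace type simple singularity over $p$, produce the isolated branch there via Remark \ref{rem:simpleaislada}, and push it down by Proposition \ref{prop:estabilidadaisladas}. The paper treats the case $e_p(E)=1$ separately (then $p$ is already trace simple) and states the iteration of Lemma \ref{lema:noesquinapresimple} more tersely, but the content is the same.
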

\begin{proof}
	If $e_p(E)=1$, then $p$ is necessarily a trace type simple singularity and there is only an isolated invariant branch through $p$, in view of Remark \ref{rem:simpleaislada}. When $e_p(E)=2$, we consider the composition $\pi:(\mathcal{M}',\mathcal{F}')\to (\mathcal{M},\mathcal{F})$, given by a finite sequence of combinatorial blowing-ups, that induces a reduction of singularities over $p$. As a result of Lemma \ref{lema:noesquinapresimple}, there is a trace type simple singularity ${p'}\in\pi^{-1}(p)$ and an isolated invariant branch $(\Gamma',{p'})$ through $p'$. Then $(\Gamma,p)$ is an isolated invariant branch for $(\mathcal{M},\mathcal{F})$, where $(\Gamma,p)$ is the image of $(\Gamma',{p'})$.
\end{proof}

This proposition does not hold when $(\mathcal{M},\mathcal{F})$ is just of weak toric type at $p$: for instance, when $e_p(E)=1$ and the foliation is locally defined by the radial vector field. Nevertheless, when we work in a global way with weak toric type foliations on toric projective surfaces, we state in Proposition \ref{prop:existenciatoricodebil} a result of existence of isolated invariant branches.

Let $\mathcal{F}$ be a CH weak toric type foliation on $\mathbb{P}_{\mathbb{C}}^2$. We take notations as in Subsection \ref{subsec:Global Nature of Isolated Invariant Branches}, recalling, in particular the existence of the cases a), b) and c) for  the homogeneous polygon $\Delta_h(\mathcal{F})$.

Assume that $\Delta_h(\mathcal{F})$ is not a single point. Consider $\lambda\in \Lambda_{\mathcal{F}}$ and denote 
\begin{eqnarray*}
	Y=\ell_\lambda, \; P_1=P_{\lambda} \text{ and } P_2=O_0, \text{ if } \Delta_h(\mathcal{F}) \text{ is in case b)}. \\
	Y=\mathcal{C}_{\lambda}, \; P_1=O_1 \text{ and } P_2=O_2, \text{ if } \Delta_h(\mathcal{F}) \text{ is in case c)}.
\end{eqnarray*}
We have that $Y\cap E= \{P_1,P_2\}$ and $P_1,P_2$ are not presimple corner type points of $\text{Sing}(\mathcal{F},E)$. Moreover, the germs $(Y,P_1)$ and $(Y,P_2)$ are irreducible branches: in case b) they are lines and in case c) they are cusps of types $(\tilde{d},\tilde{d}-\tilde{a})$ and $(\tilde{d},\tilde{a})$, respectively. Let
$$
\pi:((M',E'),\mathcal{F}')\to ((\mathbb{P}_{\mathbb{C}}^2 ,E_{\mathbb{P}_{\mathbb{C}}^2 }),\mathcal{F})
$$
be a combinatorial pre-reduction of singularities and denote by $Y'$ the strict transform of $Y$ by $\pi$. We have that $Y'\cap E'=\{p_1',p_2'\}$, where each $p_i'\in \pi^{-1}(P_i)$ is a trace type presimple singularity, for $i=1,2$. Let $\xi_i$ be a germ of vector field at $p'_i$ tangent to $\mathcal{F'}$ with non-nilpotent linear part $L_{\xi_i}$ and denote by $r(L_{\xi_i})=\{\alpha_i,1/\alpha_i\}$ the ratios of its eigenvalues.
\begin{lema} \label{lema:conexion}
	We have $r(L_{\xi_1})=-r(L_{\xi_2})$. In particular, there is a simple singularity between $p'_1$ and $p'_2$.
\end{lema}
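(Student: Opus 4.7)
The plan is to apply the Camacho--Sad index theorem to the smooth invariant curve $Y'$ and then to show that its self-intersection vanishes. Since $\mathcal{F}'$ is pre-desingularized, Remark \ref{rem:singsingadapt} gives $\mathrm{Sing}(\mathcal{F}')=\mathrm{Sing}(\mathcal{F}',E')$, so $\mathrm{Sing}(\mathcal{F}')\cap Y'\subset Y'\cap E'=\{p'_1,p'_2\}$. At each $p'_i$ the invariant branch $(Y',p'_i)$ is transverse to $E'$ and smooth by Lemma \ref{lema:aisladaspresimple}, and elsewhere $Y'$ is a regular leaf of $\mathcal{F}'$; hence $Y'$ is globally smooth. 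The Camacho--Sad formula then reads
\[
CS(\mathcal{F}',Y',p'_1)+CS(\mathcal{F}',Y',p'_2)\;=\;Y'\cdot Y'.
\]
At each trace presimple $p'_i$, the two invariant branches are $(Y',p'_i)$ and the germ of the local component of $E'$, so the two Camacho--Sad indices coincide with the two eigenvalue ratios $\alpha_i,1/\alpha_i$ of $L_{\xi_i}$; in particular $CS(\mathcal{F}',Y',p'_i)\in r(L_{\xi_i})$.

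The next step is to prove $Y'\cdot Y'=0$ using the standard transformation law $Y'\cdot Y'=Y\cdot Y-\sum_j m_j^2$, where $m_j$ is the multiplicity of the successive strict transform of $Y$ at the $j$-th center of $\pi$. In case b), $Y=\ell_\lambda$ is a line, $Y\cdot Y=1$, and the only center of $\pi$ meeting a strict transform of $Y$ is the common point $O_0$, with multiplicity $1$; hence $Y'\cdot Y'=0$. In case c), $Y=\mathcal{C}_\lambda$ has degree $\tilde{d}$ so $Y\cdot Y=\tilde{d}^2$, and the centers of $\pi$ lying on successive strict transforms of $Y$ are exactly those of the minimal toric resolutions of the two cusps of $Y$, at $O_1$ (cusp of type $(\tilde{a},\tilde{d})$) and at $O_2$ (cusp of type $(\tilde{d}-\tilde{a},\tilde{d})$). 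Using the identity $\sum_j m_j^2=pq$ for a plane cusp of type $(p,q)$ with $\gcd(p,q)=1$ (which follows from $\sum m_j=p+q-1$ together with the delta-invariant formula $\sum m_j(m_j-1)/2=(p-1)(q-1)/2$), the total contribution is $\tilde{a}\tilde{d}+(\tilde{d}-\tilde{a})\tilde{d}=\tilde{d}^2$, giving $Y'\cdot Y'=0$.

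Combining the two parts, $CS(\mathcal{F}',Y',p'_1)=-CS(\mathcal{F}',Y',p'_2)$, so $r(L_{\xi_1})=-r(L_{\xi_2})$ (both ratio sets are invariant under $\alpha\mapsto 1/\alpha$). For the final assertion, a presimple point fails to be simple exactly when its eigenvalue ratio lies in $\mathbb{Q}_{>0}$; since the negative of a positive rational is a negative rational, $r(L_{\xi_1})$ and $r(L_{\xi_2})$ cannot both be contained in $\mathbb{Q}_{>0}$, so at least one of $p'_1,p'_2$ must be a simple singularity.

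The main obstacle is the self-intersection computation in case c), which requires the arithmetic identity $\sum m_j^2=pq$ for the minimal resolution of a plane cusp of type $(p,q)$, together with the verification that no further blowup of $\pi$ is centered on (a strict transform of) $Y$. The latter rests on the fact that, after resolving the two cusps, $Y'$ already meets $E'$ only at trace presimple singularities, and in a (minimal) pre-reduction one does not blow up presimple points.
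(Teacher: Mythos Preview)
Your approach via the Camacho--Sad formula is correct and genuinely different from the paper's. The paper computes the eigenvalues explicitly: in case b) it reads them off at $P_\lambda$ and at the trace point over $O_0$; in case c) it writes monomial charts for the cusp resolutions at $O_1$ and $O_2$, obtains the two pairs $(\mu_i,\rho_i)$ and checks the relations $\mu_1=-\lambda^{n-2}\mu_2$, $\rho_1=\lambda^{n-2}\rho_2$ by hand. You replace this by the single global identity $CS_1+CS_2=Y'\cdot Y'=0$, trading the eigenvalue computation for the arithmetic fact $\sum m_j^2=pq$ together with the identification of the combinatorial pre-reduction over each $O_i$ with the embedded resolution of the corresponding branch of $Y$. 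The paper's route is more elementary (no index theorem) and actually produces the eigenvalues themselves; yours is more conceptual and explains the relation $r(L_{\xi_1})=-r(L_{\xi_2})$ as an index-theoretic constraint that would transfer to analogous situations without new calculations.

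Two minor points deserve tightening. First, the appeal to Lemma~\ref{lema:aisladaspresimple} for the smoothness and transversality of $(Y',p'_i)$ is not quite licit: that lemma is about \emph{isolated} branches, which is precisely what the present lemma is meant to establish. The smoothness of $Y'$ at $p'_i$ should instead be read off from the explicit toric description of the strict transform (a line in case b); in case c) the monomial chart makes it the line $v_1=0$, as the paper itself records). Second, ``minimal toric resolution of the cusp'' must be understood as the \emph{embedded} resolution relative to the coordinate axes: when $\tilde a=1$ or $\tilde d-\tilde a=1$ the branch of $Y$ is already smooth but still tangent to an axis, and one keeps blowing up corners until the strict transform meets $E'$ at a trace point. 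Your identity $\sum m_j^2=pq$ (via $\sum m_j=p+q-1$ and the $\delta$-formula) remains valid for this longer sequence, and it does coincide with the minimal combinatorial pre-reduction of $\mathcal F$ over $O_1,O_2$, since both are governed by the Euclidean algorithm on $(\tilde a,\tilde d)$; this is essentially what the paper verifies in its treatment of case c) of Theorem~\ref{teo:prolongacion}.
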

\begin{proof}
	In case b), the eigenvalues of $L_{\xi_1}$ are $\bar{A}_0(1,\lambda)$ and $-A_2(1,\lambda)$, where $A_0=(X_2-\lambda X_1)\bar{A}_0$. The eigenvalues of $L_{\xi_2}$ are $-\bar{A}_0(1,\lambda)$ and $-A_2(1,\lambda)$, since $A_1+A_2=-A_0$. 
	
	In case c), we start by considering the sequence of combinatorial blowing-ups that provides the minimal reduction of singularities of the cusp $(Y,P_1)$. It is given in affine coordinates $(u_1,v_1)$ centered at $p_1'$ by $x_0^1=u_1^{\tilde{a}}(v_1+\lambda)^{\beta}$ and $x_2^1=u_1^{\tilde{d}}(v_1+\lambda)^{\delta}$. A local generator of the transform of $\mathcal{F}$ adapted to the divisor $(u_1=0)$ is given by
	$$
	(\tilde{a}A_0+\tilde{d}A_2)(1,v_1+\lambda)\frac{du_1}{u_1}+\frac{(\beta A_0+\delta A_2)(1,v_1+\lambda)}{v_1+\lambda}dv_1,
	$$
	and the eigenvalues of $L_{\xi_1}$ are 
	$$
	\mu_1=-\frac{(\beta A_0+\delta A_2)(1,\lambda)}{\lambda}, \quad \rho_1=B(1,\lambda); \quad B(U,V)=\frac{\tilde{a}A_0+\tilde{d}A_2}{V-\lambda U}.
	$$
	Now, we consider the sequence of combinatorial blowing-ups obtained following the infinitely near points of the cusp $(Y,P_2)$. It is given in affine coordinates $(u_2,v_2)$ centered at $p_2'$ by $x_0^2=u_2^{\tilde{d}-\tilde{a}}(v_2+1/\lambda)^{\delta-\beta}$ and $x_1^2=u_2^{\tilde{d}}(v_2+1/\lambda)^{\delta}$.
	A local generator of the transform of $\mathcal{F}$ adapted to the divisor $(u_2=0)$ is given by
	$$
	((\tilde{d}-\tilde{a})A_0+\tilde{d}A_1)(v_2+1/\lambda,1)\frac{du_2}{u_2}+\frac{((\delta-\beta)A_0+\delta A_1)(v_2+1/\lambda,1)}{v_2+1/\lambda}dv_2,
	$$
	and the eigenvalues of $L_{\xi_2}$ are 
	$$
	\mu_2=-\frac{((\delta-\beta)A_0+\delta A_1)(1/\lambda,1)}{\frac{1}{\lambda}} \quad \rho_2=C(1/\lambda,1); \quad C(U,V)=\frac{(\tilde{d}-\tilde{a})A_0+\tilde{d}A_1}{-\frac{1}{\lambda}(V-\lambda U)}.
	$$
	We have the relations $\mu_1=-\lambda^{n-2}\mu_2$ and $\rho_1=\lambda^{n-2}\rho_2$, since $A_1=-(A_0+A_2)$. Hence $r(L_{\xi_1})=-r(L_{\xi_2})$.
\end{proof}
\begin{rem} \label{rem:todossonsimples}
	When $\mathcal{F}$ is of toric type, $p'_1$ and $p'_2$ are both trace type simple singularities. Hence $(Y,P_1)$ and $(Y,P_2)$ are isolated invariant branches. 
\end{rem}

\begin{prop} \label{prop:existenciatoricodebil}
	Let $\mathcal{F}$ be a weak toric type foliation on a projective toric surface $S$. Assume that there is a point $p\in \text{Sing}(\mathcal{F},E)$ which is not presimple of corner type. We have that there is an isolated invariant branch $(\Gamma,q)$ passing through some $q\in \text{Sing}(\mathcal{F},E_S)$.
\end{prop}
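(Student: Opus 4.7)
The plan is to reduce to the case $S=\mathbb{P}^2_{\mathbb{C}}$ by equivariant birational geometry and then to apply Lemma \ref{lema:conexion}. By Theorem \ref{Teo:Oda} I take an equivariant common refinement $S''$ with combinatorial morphisms $\pi\colon S''\to S$ and $\pi'\colon S''\to \mathbb{P}^2_{\mathbb{C}}$. Let $\mathcal{F}''$ be the transform of $\mathcal{F}$ on $S''$ and let $\mathcal{F}'$ be its image on $\mathbb{P}^2_{\mathbb{C}}$ along $\pi'$. Both are CH foliations of weak toric type, by the stability of these properties under combinatorial morphisms (Proposition \ref{prop:estabilidadNND} together with Theorem \ref{teo:equivalencia}). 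Applying Lemma \ref{lema:noesquinapresimple} inductively along every blowing-up in the chains defining $\pi$ and $\pi'$, in both the ``up'' and the ``down'' direction, the hypothesis propagates: there is a point $\tilde p\in \text{Sing}(\mathcal{F}',E_{\mathbb{P}^2_{\mathbb{C}}})$ that is not presimple of corner type.

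By Theorem \ref{teo:equivalencia}, $\mathcal{F}'$ is Newton non-degenerate, so Proposition \ref{prop:area0} places $\Delta_h(\mathcal{F}')$ in one of the three cases a), b), c). The analysis in case a), reproduced in the proof of Theorem \ref{teo:prolongacion}, shows that every element of $\text{Sing}(\mathcal{F}',E_{\mathbb{P}^2_{\mathbb{C}}})$ is a presimple corner singularity, contradicting the existence of $\tilde p$; hence $\Delta_h(\mathcal{F}')$ must be a segment, so I am in case b) or c). I then pick any $\lambda\in\Lambda_{\mathcal{F}'}$, which is nonempty by the remark following Lemma \ref{lema:conexion}, and consider the associated line or cusp $Y$ meeting $E_{\mathbb{P}^2_{\mathbb{C}}}$ at two points $P_1,P_2$. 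After a combinatorial pre-reduction of singularities, the strict transform $Y'$ meets the exceptional divisor at two trace-type presimple singularities $p'_1,p'_2$. By Lemma \ref{lema:conexion}, at least one of them, say $p'_1$, is a trace-type simple singularity, which in the CH setting cannot be a saddle-node. Thus Remark \ref{rem:simpleaislada} supplies an isolated invariant branch through $p'_1$, and Proposition \ref{prop:estabilidadaisladas} applied along the combinatorial pre-reduction sequence descends it to an isolated invariant branch of $\mathcal{F}'$ through $P_i$ for some $i\in\{1,2\}$.

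Finally, I lift this isolated branch from $\mathbb{P}^2_{\mathbb{C}}$ to $S''$ along $\pi'$, using the stability of isolation under blowing-ups stated just before Proposition \ref{prop:estabilidadaisladas}, and then push it down to $S$ along $\pi$ by iterated application of Proposition \ref{prop:estabilidadaisladas}; this produces the desired isolated invariant branch $(\Gamma,q)$ with $q\in \text{Sing}(\mathcal{F},E_S)$. The delicate step is the bookkeeping in the first paragraph: one must verify that the non-presimple-corner hypothesis really survives the mixed sequence of combinatorial blowing-ups and blowing-downs, which requires both implications of Lemma \ref{lema:noesquinapresimple} together with an inductive argument. Once $\tilde p$ is produced on $\mathbb{P}^2_{\mathbb{C}}$, the remainder is a direct concatenation of the classification of $\Delta_h(\mathcal{F}')$, Lemma \ref{lema:conexion}, and the stability of isolated branches under combinatorial morphisms.
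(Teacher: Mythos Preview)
Your argument is correct and follows essentially the same route as the paper: reduce to $\mathbb{P}^2_{\mathbb{C}}$ via Lemma \ref{lema:noesquinapresimple} and Proposition \ref{prop:estabilidadaisladas}, rule out case a) of $\Delta_h$, and then invoke Lemma \ref{lema:conexion} to find a trace simple singularity whose unique transverse branch is isolated and descends. Your write-up simply makes the birational bookkeeping (the common refinement $S''$ and the two-way use of Lemma \ref{lema:noesquinapresimple}) more explicit than the paper does; one small inaccuracy is that the nonemptiness of $\Lambda_{\mathcal{F}'}$ is established in the remark at the end of Subsection \ref{subsec:Global Nature of Isolated Invariant Branches}, not in the remark following Lemma \ref{lema:conexion}.
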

\begin{proof}
	It is enough to do the proof when $S=\mathbb{P}_{\mathbb{C}}^2$, because of Lemma \ref{lema:noesquinapresimple} and the stability of the isolated invariant branches by combinatorial blowing-ups and blowing-downs stated in Proposition \ref{prop:estabilidadaisladas}. When the homogeneous polygon $\Delta_h(\mathcal{F})$ has a single vertex, all the points in $\text{Sing}(\mathcal{F},E_{\mathbb{P}_{\mathbb{C}}^2})$ are presimple of corner type. As a consequence $\Delta_h(\mathcal{F})$ is not a single vertex and it is in case b) or in case c). By Lemma \ref{lema:conexion}, there is a trace type simple singularity $q'$ after performing a pre-reduction of singularities that we denote by $\pi$. The (only) invariant branch $(Y',{q'})$ through $q'$ provides an isolated invariant branch $(Y,q)$ through $q=\pi(q')\in\text{Sing}(\mathcal{F},E_{\mathbb{P}_{\mathbb{C}}^2})$.
\end{proof}

When we are in the toric type case, we can precise the above statement as follows:
\begin{prop}
	Let us consider a toric type foliation $\mathcal{F}$ on a projective toric surface $S$ and an isolated invariant branch $(\Gamma,p)$. Let $Y$ be a projective algebraic curve extending $(\Gamma,p)$. We have that any branch $(\Upsilon,q)\subset (Y,q)$ with $q\in Y \cap E$ is an isolated invariant branch.
\end{prop}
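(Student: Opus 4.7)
The plan is to reduce the statement to $S = \mathbb{P}^2_{\mathbb{C}}$ and then invoke Remark \ref{rem:todossonsimples}. By Corollary \ref{cor:birracional} together with Theorem \ref{Teo:Oda}, we may fit $S$ and $\mathbb{P}^2_{\mathbb{C}}$ into a diagram $S \leftarrow S'' \rightarrow \mathbb{P}^2_{\mathbb{C}}$ whose arrows are finite sequences of combinatorial blowing-ups. The toric type property is preserved under combinatorial blowing-ups and blowing-downs (by composition of combinatorial pre-reductions), and the same holds for the property of being an isolated invariant branch: upward is immediate from the definition, and downward is exactly Proposition \ref{prop:estabilidadaisladas}. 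Because combinatorial operations send divisor points to divisor points, the branches of $Y$ at points of $E_S$ lift, via strict transforms on $S''$, to branches of the strict transform $Y''$ at points of $E_{S''}$, and these in turn correspond to branches of the image $Y^* = \pi(Y'')$ at points of $E_{\mathbb{P}^2_{\mathbb{C}}}$, where $\pi: S'' \to \mathbb{P}^2_{\mathbb{C}}$. Thus it suffices to prove the statement for $\mathcal{F}^*$ (the foliation on $\mathbb{P}^2_{\mathbb{C}}$ whose lift to $S''$ coincides with the lift of $\mathcal{F}$) and for the irreducible component of $Y^*$ through the image $(\Gamma^*, p^*)$ of the isolated branch $(\Gamma, p)$.

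Since $\mathcal{F}^*$ admits the isolated invariant branch $(\Gamma^*, p^*)$, the homogeneous polygon $\Delta_h(\mathcal{F}^*)$ cannot be a single vertex, so by Proposition \ref{prop:area0} we are in case b) or case c) of Section \ref{sec:Newton Non-degenerate Foliations on Projective Toric Surfaces}. The classification of isolated invariant branches carried out in the proof of Theorem \ref{teo:prolongacion} then forces $Y^*$ to be one of the lines $\ell_\lambda = (X_2 - \lambda X_1 = 0)$ or one of the cuspidal curves $\mathcal{C}_\lambda = (X_1^{\tilde d - \tilde a} X_2^{\tilde a} - \lambda X_0^{\tilde d} = 0)$, for some $\lambda \in \Lambda_{\mathcal{F}^*}$. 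In both cases $Y^* \cap E_{\mathbb{P}^2_{\mathbb{C}}}$ consists of exactly two points $P_1, P_2$, and $Y^*$ has a single irreducible branch at each.

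To finish, I apply Remark \ref{rem:todossonsimples}: in the toric type case, the points $p'_1, p'_2$ appearing in Lemma \ref{lema:conexion} over $P_1, P_2$ after a suitable combinatorial pre-reduction of singularities are both trace type simple singularities, with the strict transform of $(Y^*, P_i)$ passing through $p'_i$. By Remark \ref{rem:simpleaislada}, each $p'_i$ admits a unique isolated invariant branch, which must therefore be the strict transform of $(Y^*, P_i)$. Proposition \ref{prop:estabilidadaisladas} applied to the combinatorial blowing-downs of the pre-reduction shows that both $(Y^*, P_1)$ and $(Y^*, P_2)$ are isolated invariant branches of $\mathcal{F}^*$; transferring back through the birational reduction of the first paragraph completes the proof. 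The chief difficulty is entirely bookkeeping, namely verifying that the birational correspondence between $S$ and $\mathbb{P}^2_{\mathbb{C}}$ is well behaved on branches meeting the divisor and that the toric type hypothesis persists in both directions; the essential geometric content is the observation in Remark \ref{rem:todossonsimples}.
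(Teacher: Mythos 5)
Your argument is correct and follows the same route as the paper's own (one-line) proof: reduce to $\mathbb{P}^2_{\mathbb{C}}$, use the classification behind Theorem \ref{teo:prolongacion} to identify $Y$ with one of the curves $\ell_\lambda$ or $\mathcal{C}_\lambda$, and conclude with Remark \ref{rem:todossonsimples} that both branches $(Y,P_1)$ and $(Y,P_2)$ are isolated. The paper simply cites these two ingredients, while you supply the bookkeeping for the birational transfer between $S$ and $\mathbb{P}^2_{\mathbb{C}}$.
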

\begin{proof}
	It follows from Theorem \ref{teo:prolongacion} and Remark \ref{rem:todossonsimples}.
\end{proof}
\section{Rational First Integrals and Global Invariant Curves}

We prove the following property for toric type foliations on projective toric surfaces: Either there are finitely many global invariant curves different from the divisor, all of them extending isolated invariant branches, or there are infinite many global invariant curves (there is rational first integral), but there are no isolated invariant branches.

We say that an invariant branch $(\Gamma,p)\not\subset (E,p)$ of a foliated surface $(\mathcal{M},\mathcal{F})$ is \emph{proper for $(\mathcal{M},\mathcal{F})$} when $p\in E$. This property is stable by combinatorial blowing-ups and blowing-downs. When $(\mathcal{M},\mathcal{F})$ is of weak toric type, all the isolated invariant branches are proper, since every singularity belongs to the divisor.

\begin{teo} \label{teo:dicotomia}
	We have the following dichotomy for a toric type foliation $\mathcal{F}$ on a projective toric surface $S$:
	\begin{enumerate}
		\item [I)] There is rational first integral and there are no isolated invariant branches.
		\item [II)] There is no rational first integral and every proper invariant branch extending to a projective algebraic curve is an isolated invariant branch. \qedhere
	\end{enumerate}
\end{teo}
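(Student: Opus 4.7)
By Corollary \ref{cor:birracional} a finite sequence of combinatorial blow-ups and blow-downs reduces us to a foliation on $\mathbb{P}^{2}_{\mathbb{C}}$. The existence of a rational first integral is trivially birationally invariant, and the property of being an isolated invariant branch is preserved by combinatorial birational transformations (Proposition \ref{prop:estabilidadaisladas}), so I will prove the dichotomy on $\mathbb{P}^{2}_{\mathbb{C}}$ and split on the three shapes a), b), c) of $\Delta_h(\mathcal{F})$ already enumerated in the proof of Theorem \ref{teo:prolongacion}.

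For the implication ``rational first integral implies no isolated invariant branches,'' I will force $\Delta_h(\mathcal{F})$ into case a), where no isolated branches exist by the case-a) analysis inside Theorem \ref{teo:prolongacion}. Suppose instead we are in case b) or c). An isolated invariant branch would extend to a curve $Y$ in the $\ell_\lambda$ (respectively $\mathcal{C}_\lambda$) family, and the two trace type points $p'_{1},p'_{2}\in Y'\cap E'$ produced by the pre-reduction of singularities are simple by Remark \ref{rem:todossonsimples}, with opposite eigenvalue ratios $r(L_{\xi_{1}})=-r(L_{\xi_{2}})$ by Lemma \ref{lema:conexion}. A rational first integral forces the eigenvalue ratio at every simple singularity to be rational, since the local first integral near a simple model $ay\,dx-bx\,dy=0$ is a function of $x^{a}y^{-b}$, which is rational only when $a/b\in\mathbb{Q}$. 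Simplicity further forbids ratios in $\mathbb{Q}_{\geq 0}$, so both ratios would have to lie in $\mathbb{Q}_{<0}$; but two negative rationals cannot be opposite in sign, a contradiction.

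For the converse implication, let $(\Upsilon,q)$ be a proper invariant branch extending to an irreducible algebraic curve $Y$. It suffices to exhibit a single isolated branch on $Y$, since the proposition immediately preceding this theorem then propagates isolation to every branch of $Y$ at $E$. In case a), the absence of a rational first integral forces the $\lambda_{i}$ of $W=\sum\lambda_{i}\,dX_{i}/X_{i}$ to be $\mathbb{Q}$-incommensurable, so the only invariant algebraic curves are the coordinate lines (all contained in $E$) and the conclusion is vacuous. In cases b) and c), the transcendental local logarithmic first integral $F_{\mathrm{loc}}$ in coordinates adapted to $O_{0}$ (respectively to the combinatorial model of the cuspidal pencil) has a partial-fraction decomposition whose residues $c_{0},c_{\lambda}$ are all rational precisely when $F_{\mathrm{loc}}$ globalizes into a rational first integral; under our hypothesis at least one residue is irrational, so no level set other than the zero and pole loci of $F_{\mathrm{loc}}$ is algebraic, and these loci recover exactly the $\ell_\lambda$ (respectively $\mathcal{C}_\lambda$) together with components of $E$. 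Each such $Y$ carries an isolated branch by Lemma \ref{lema:conexion} and Remark \ref{rem:todossonsimples}, finishing the argument. The main obstacle is this identification of invariant algebraic curves in case c), where the weighted desingularization of $\mathcal{C}_\lambda$ must be tracked at both endpoints $O_{1},O_{2}$ so that the residue-rationality analysis yields the same conclusion at both vertices.
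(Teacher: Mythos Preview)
Your reduction to $\mathbb{P}^2_{\mathbb{C}}$ and the case split on $\Delta_h(\mathcal{F})$ match the paper, and your first implication (rational first integral forces case~a)) via the eigenvalue–ratio argument from Lemma~\ref{lema:conexion} and Remark~\ref{rem:todossonsimples} is a nice shortcut that the paper does not use; it is correct, provided one accepts the classical fact that a rational first integral forces rational Camacho--Sad indices at every simple singularity.

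The genuine gap is in your converse implication in cases b) and c). You invoke a ``local logarithmic first integral $F_{\mathrm{loc}}$'' and a ``partial-fraction decomposition whose residues $c_0,c_\lambda$ are all rational precisely when $F_{\mathrm{loc}}$ globalizes into a rational first integral''. No such $F_{\mathrm{loc}}$ has been constructed: the $1$-form $W$ is not closed in general, so there is no logarithmic primitive to decompose, and the asserted equivalence between rationality of residues and globalization is neither stated nor proved anywhere in the paper. Even granting some closed logarithmic model, the step ``at least one residue is irrational, so no level set other than the zero and pole loci of $F_{\mathrm{loc}}$ is algebraic'' does not follow: irrational residues do not by themselves rule out isolated algebraic leaves. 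What you actually need is the classification of \emph{all} global invariant curves not contained in $E$, and your residue sketch does not deliver it.

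The paper establishes that classification by a completely different, elementary mechanism. In case~b) one fixes $\lambda_0\in\Lambda_{\mathcal{F}}$ and intersects an arbitrary irreducible invariant curve $Y$ with $\ell_{\lambda_0}$: the intersection points must be singularities of $\mathcal{F}$ lying on $\ell_{\lambda_0}$, hence in $\{O_0,P_{\lambda_0}\}$, and simplicity of $P_{\lambda_0}$ together with simplicity of the post-blow-up point $p'_{\lambda_0}$ forces $Y$ to be a line in the finite list $\{X_1=0,\ X_2=0\}\cup\{\ell_\lambda:\lambda\in\Lambda_{\mathcal{F}}\}$. In case~c) the paper pulls everything back to case~b) through the toric rational map $\varphi:[X_0,X_1,X_2]\mapsto[X_0^{\tilde d},X_1^{\tilde d-\tilde a}X_2^{\tilde a},X_2^{\tilde d}]$, which sends $\mathcal{C}_\lambda$ to $\ell_\lambda$ and transforms $\mathcal{F}$ into a case-b) foliation $\mathcal{G}$ with the same $\Lambda$; this is exactly the ``tracking at both endpoints'' you flagged as the main obstacle, and it bypasses the residue analysis entirely. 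Once the classification is in hand, finiteness of the list gives the absence of a rational first integral, and Remark~\ref{rem:todossonsimples} shows every branch of each listed curve at $E$ is isolated.
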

It is enough to prove the dichotomy for the case $S=\mathbb{P}_{\mathbb{C}}^2$. Indeed, having rational first integral, being a projective algebraic curve, being invariant, proper and isolated are properties that have a good behaviour under blowing-ups and blowing-downs. 

\begin{rem}
	Every global invariant curve $Y$ not contained in the divisor extends at least a proper branch. To see it, just note that we have $Y \cap (X_i=0) \ne \emptyset$, when $S=\mathbb{P}_{\mathbb{C}}^2$.
\end{rem}
Assume $S=\mathbb{P}_{\mathbb{C}}^2$. We distinguish between the cases a), b) and c) for the homogeneous polygon $\Delta_h(\mathcal{F})$ and we take notations as in Subsection \ref{subsec:Global Nature of Isolated Invariant Branches}.
\paragraph{Case a).} There are no isolated invariant branches for $\mathcal{F}$. We have two options:
\begin{itemize}
	\item The points $O_0,O_1,O_2$ are all simple corners. The components of the divisor $X_i=0$ are invariant, for $i=0,1,2$. Assume that there is a global invariant curve $Y$ different from these ones. We have that $Y$ cuts $X_0=0$ at singular points, that are necessarily $O_1$ or $O_2$. This contradicts the fact that $O_1$ and $O_2$ are simple singularities. As a consequence, such a $Y$ can not exist. In particular, there is no rational first integral for $\mathcal{F}$ and we are in situation II).
	\item One of the $O_i$ is presimple but non-simple, say $O_0$. We have that 
	$$
	\lambda_0=m-n, \lambda_1=n, \lambda_2=-m, \; n,m\in \mathbb{Z}_{>0}.
	$$
	Then $d(X_0^{m-n}X_1^{n}X_2^{-m})$ is a rational first integral and we are in situation I). 
\end{itemize}
\paragraph{Case b).} 
Let us see that situation II) holds. By Remark \ref{rem:todossonsimples}, we have that $(\ell_\lambda,O_0)$, $(\ell_\lambda,P_\lambda)$ are isolated invariant branches for every $\lambda \in \Lambda_{\mathcal{F}}$ and they are the only ones. Then, it just remains to prove that a global invariant curve is either a component of the divisor or a line $\ell_\lambda$ with $\lambda \in \Lambda_{\mathcal{F}}$. 

Let $Y$ be a global irreducible invariant curve of degree $r$ different from $\ell_{\lambda_0}$ for some $\lambda_0 \in \Lambda_{\mathcal{F}}$. The curve $Y$ intersects $\ell_{\lambda_0}$ in $r$ singular points (counted with multiplicity). They are necessarily $O_0$ or $P_{\lambda_0}$ and in fact, just $O_0$, since $P_{\lambda_0}$ is a simple singularity. If $Y$ is a line, it cuts transversally $\ell_{\lambda_0}$ at $O_0$ and it intersects $X_0=0$ in a single singularity. As a consequence it is $X_1=0$, $X_2=0$ or a line $\ell_\lambda$ with $\lambda \in \Lambda_{\mathcal{F}} \setminus \{\lambda_0\}$. Otherwise, when $r > 1$, we have that $Y$ and $\ell_{\lambda_0}$ have the same tangent at $O_0$. When we perform the blowing-up, we obtain three invariant branches passing through $p'_{\lambda_0}$: the exceptional divisor, $\ell'_{\lambda_0}$ and the strict transform $Y'$ of $Y$. This contradicts the fact that $p'_{\lambda_0}$ is a simple singularity.
\paragraph{Case c).} 
Let us prove that situation II) holds. Like in case b), the problem is reduced to show that a global invariant curve is either a component of the divisor or one of the curves $\mathcal{C}^{\mathcal{F}}_\lambda=\mathcal{C}_\lambda$ with $\lambda \in \Lambda_{\mathcal{F}}$.

We take the rational map $\varphi:\mathbb{P}_{\mathbb{C}}^2 \setminus{[0,0,1]} \rightarrow \mathbb{P}_{\mathbb{C}}^2$ given in coordinates by
$$
Y_0=X_0^{\tilde{d}}, \; Y_1=X_1^{\tilde{d}-\tilde{a}}X_2^{\tilde{a}}, \; Y_2=X_2^{\tilde{d}}.
$$
This map is compatible with the torus action. A homogeneous generator $W$ of $\mathcal{F}$ is given by
$$
W=A_0(Y_0,Y_1)dX_0/X_0+A_1(Y_0,Y_1)dX_1/X_1+A_2(Y_0,Y_1)dX_2/X_2,
$$
where the coefficients $A_i$ are homogeneous polynomials of degree $n$ and $A_0+A_1+A_2=0$. We have that $\mathcal{F}$ is the pull-back by $\varphi$ of a the toric type foliation $\mathcal{G}$ of $\mathbb{P}_{\mathbb{C}}^2$ generated by
$$
\Omega=(\tilde{d}-\tilde{a})A_0dY_0/Y_0+\tilde{d}A_1dY_1/Y_1+(-\tilde{a}A_1+ (\tilde{d}-\tilde{a})A_2)dY_2/Y_2.
$$
The homogeneous polygon $\Delta_h(\mathcal{G})$ is in case b). Moreover, we have
$$
\Lambda=\Lambda_{\mathcal{G}}=\{\lambda \in \mathbb{C}^*; \; (-\tilde{a}A_1+ (\tilde{d}-\tilde{a})A_2)(1,\lambda)=0\}=\{\lambda \in \mathbb{C}^*; \; (\tilde{a}A_0 + \tilde{d}A_2)(1,\lambda)=0\}=\Lambda_{\mathcal{F}}.
$$
As a consequence, there is a biunivocal relation between $\mathcal{C}_{\lambda}^{\mathcal{F}}$ and $\ell_{\lambda}^{\mathcal{G}}$. More precisely 
$$
\varphi(\mathcal{C}_{\lambda}^{\mathcal{F}})=\ell_{\lambda}^{\mathcal{G}}, \quad \varphi^{-1}(\ell_{\lambda}^{\mathcal{G}})=\mathcal{C}_{\lambda}^{\mathcal{F}}.
$$
Let $Y\not\subset (X_0X_1X_2=0)$ be a global curve invariant for $\mathcal{F}$. 
We have that $\varphi(Y) \not\subset(Y_0Y_1Y_2=0)$ and it is a global curve invariant for $\mathcal{G}$. By the study of case b), there is a $\lambda\in \Lambda$ such that $\varphi(Y)=\ell_{\lambda}^{\mathcal{G}}$ and hence $Y=\mathcal{C}_{\lambda}^{\mathcal{F}}$.
\fun
The author is partially supported by the Ministerio de Educaci\'on, Cultura y Deporte of Spain (FPU14/02653 grant) and by the Ministerio de Econom\'ia y Competitividad from Spain, under the Project ``Algebra y geometr\'ia en sistemas din\'amicos y foliaciones singulares.'' (Ref.:  MTM2016-77642-C2-1-P).
\acks
I would like to express my deepest gratitude to Professor Felipe Cano for his unconditional supervision of this work. I would also like to thank Professor Clementa Alonso for all her suggestions.


\begin{thebibliography}{00}
	\bibitem{Ber} D.N. Bernshtein, \textit{The number of roots of a system of equations.} Funktsional'nyi Analiz i Ego Prilozheniya Vol.9, No. 3, pp. 1-4 (1975) (Russian). English translation in Functional Analysis and its Applications 9, No. 3, pp. 183-185 (1975).
	\bibitem{Cam-C} M.I.T. Camacho, F. Cano, \textit{Singular foliations of toric type.} Annales de la facult\'e des sciences de Toulouse, $6^e$ s\'erie, tome 8, No. 1, pp. 45-52 (1999).
	\bibitem{Cam-S} C. Camacho, P. Sad, \text{Invariant varieties through singularities of holomorphic vector fields.} Annals of Mathematics, Vol. 115, No. 3, pp. 579-595 (1982). 
	\bibitem{Cam-N-S} C. Camacho, A. Lins Neto, P. Sad, \textit{Topological invariants and equidesingularization for holomorphic vector fields.} J. Differential Geometry Vol. 20 No. 1, pp. 143-174 (1984).
	\bibitem{Can-C-D} F. Cano, D. Cerveau, J. D\'eserti, \textit{Th\'eorie \'el\'ementaire des feuilletages holomorphes singuliers.} Berlin Education Editions. ISBN-13: 978-2701174846 (2013).
	\bibitem{Ewa} G. Ewald, \textit{Combinatorial Convexity and Algebraic Geometry.} Springer-Verlag. ISBN-10: 0-387-94755-8 (1996).
	\bibitem {Jou} J. P. Jouanolou, \textit{\'Equations de Pfaff alg\'ebriques.} Lecture Notes in Mathematics, 708.
	Springer-Verlag ISBN-10: 3-540-09239-0 (1979)
	\bibitem{Kho} A.G. Khovanskii, \textit{Newton polyhedra (algebra and geometry).} American Mathematical Society Translations (2), Vol. 153 (1992).
	\bibitem{Kus} A.G. Kushnirenko,  \textit{Poly\`edres de Newton et nombres de Milnor.} Invent. Math. 32, No. 1, pp. 1-31 (1976).
	\bibitem{Oka} M. Oka, \textit{Non-degenerate complete intersection singularity.} Actualit\'es Math\'ematiques. Hermann, Paris,  viii+309, pp. 220. (1997).
	\bibitem {Sei} A. Seidenberg, \textit{Reduction of singularities of the differential equation Ady=Bdx.} Amer. J. Math., 90 pp. 248-269 (1968).
\end{thebibliography}
\end{document}